\pdfoutput=1

\documentclass{amsart}
\usepackage{amsmath}
\usepackage{amssymb}
\usepackage{oldgerm}
\usepackage{latexsym}
\usepackage{amsmath}
\usepackage{amssymb}
\usepackage{delarray}
\newtheorem{thm}{Theorem}[section] % Theorem 1.1, 1.2, 1.3, etc.
\newtheorem{lem}[thm]{Lemma}%[section]
\newtheorem{prop}[thm]{Proposition}%[section]
\newtheorem{conj}[thm]{Conjecture}%[section]
 % Theorem 1.1, 1.2, 1.3, etc.
%[section]
%[section]
\newtheorem{rem}[thm]{Remark}%[section]

\newtheorem{cor}[thm]{Corollary}%[section]
 % Corollary without numbering

\numberwithin{equation}{section}

\begin{document} 
 
\def\vecttwo(#1;#2){\left(\begin{array} {c} #1 \\ #2
\end{array}\right)}

\def\vecttwobrack(#1;#2){\left[\left(\begin{array} {c} #1 \\ #2
\end{array}\right)\right]}

\def\vectthree(#1;#2;#3){\left(\begin{array} {c} #1 \\ #2 \\ #3
\end{array}\right)}

\def\vectfour(#1;#2;#3;#4){\left(\begin{array} {c} #1 \\ #2 \\ #3 \\
#4 \end{array}\right)}

\def\mattwo(#1;#2;#3;#4){\left(\begin{array}{cc} #1 & #2 \\ #3 &#4
\end{array}\right)}

\def\matthree(#1,#2,#3;#4,#5,#6;#7,#8,#9){\left(\begin{array}{ccc} 
#1 & #2 & #3 \\ 
#4 &#5  & #6 \\
#7 &#8 & #9
\end{array}\right)}

\newcommand{\Sp}{\operatorname{Sp}}
\newcommand{\GSp}{\operatorname{GSp}}
\newcommand{\SL}{\operatorname{SL}}
\newcommand{\GL}{\operatorname{GL}}
\newcommand{\diag}{\operatorname{diag}}
\newcommand{\Z}{{\bf Z}}
\newcommand{\Q}{{\bf Q}}
\newcommand{\C}{{\bf C}}
\newcommand{\R}{{\bf R}}
\newcommand{\N}{{\bf N}}
\newcommand{\Half}{{\bf H}}
\def\period{{\Omega}}
\newcommand{\wideF}{{\widetilde {\mathcal F}}}
\renewcommand{\thefootnote}{\fnsymbol{footnote}}

\title[Period of the Ikeda-Miyawaki lift]  
{Period of the  Ikeda-Miyawaki lift}
\author {Tomoyoshi  Ibukiyama} 
\address{Tomoyoshi Ibukiyama, Department of Mathematics, Graduate School of Science, Osaka University, 
Machikaneyama 1-1, Toyonaka, Osaka, 560-0043 Japan}
\email{ibukiyam@math.sci.osaka-u.ac.jp}
\author{Hidenori  Katsurada}
\address{Hidenori Katsurada, Department of Mathematics, Hokkaido University,
Kita 10, Nishi 8, Kita-Ku, Sapporo, Hokkaido, 060-0810, Japan and  Muroran Institute of Technology, Mizumoto 27-1, Muroran, Hokkaido, 050-8585 Japan} 
\email{hidenori@mmm.muroran-it.ac.jp}
\author{Hisashi  Kojima}
\address{Hisashi Kojima  Department of Mathematics, Graduate School of Science and Engineering,
Saitama University, Shimo-Okubo 255, Sakura-ku, Saitama City, 338-8570, Japan}
\email{hkojima@rimath.saitama-u.ac.jp}
\thanks{The first  author was  partially supported by JSPS KAKENHI Grant Number JP JP19K03424, JP20H00115, JP23K03031. The second  author was  partially supported by JSPS KAKENHI Grant Number JP2103152}
\makeatletter
\@namedef{subjclassname@2020}{%
  \textup{2020} Mathematics Subject Classification}
\subjclass[2020]{11F46, 11F67}
\keywords{ Ikeda-Miyawaki lift, Period}
\date{\today}
\maketitle
\begin{abstract}
In this paper, first we give a weak version of Ikeda's conjecture  on the period of the Ikeda-Miyawaki lift. Next, 
we confirm this conjecture rigorously in some cases.
\end{abstract}
\section{Introduction}
In this paper, we investigate Ikeda's conjecture on the period of the Ikeda-Miyawaki lift.
Let $k,n$ be positive integers such that $k \equiv n+1 \mod 2$ and $k >n$.
For a Hecke eigenform $h$ in the Kohnen plus subspace of weight $k+1/2$ for $\varGamma_0(4)$, and a primitive form $g$ of weight $k+n+1$ for $SL_2(\mathbb Z)$, Ikeda \cite{Ik2} constructed a cusp form $\mathcal F_{h,g}$ of weight $k+n+1$ for $Sp_{2n+1}(\mathbb Z)$ with the following property:

If $\mathcal F_{h,g}$ is not identically $0$, it is a Hecke eigenform and its standard $L$-function $L(s,\mathcal F_{h,g},\mathrm{St})$ is expressed as
\[L(s,\mathcal F_{h,g},\mathrm{St})=L(s,g,\mathrm{St})\prod_{i=1}^n L(s+k+n-i,f),\]
where $f$ is the primitive form of weight $2k$ for $SL_2(\mathbb Z)$ corresponding to $h$ via the Shimura correspondence, and $L(s,f)$ is Hecke's $L$-function of $f$.
The existence of such a Hecke eigenform was conjectured by Miyawaki \cite{Miy}. Therefore we call $\mathcal F_{h,g}$ the Ikeda-Miyawaki lift of $h$ and $g$.
To ensure the non-vanishing of $\mathcal F_{h,g}$, Ikeda \cite{Ik2} proposed the following conjecture concerning the period (or the Petersson norm ) of  $\mathcal F_{h,g}$:
$$2^{(2n-1)k+4n}\dfrac {\displaystyle \langle  {\mathcal F}_{h,g}, {\mathcal F}_{h,g} \rangle}{\displaystyle \langle g, g \rangle \langle h, h \rangle}=\displaystyle \Lambda(k+n,\mathrm {St}(g)\otimes f)
\prod_{i=1}^{n}\tilde \xi(2i)\prod_{i=1}^{n-1}\tilde \Lambda(2i+1,f,\mathrm {St})$$
(cf. Conjecture \ref{conj.A}). As for the  definitions of several $L$-values in question, see (\ref{Riemann-zeta}), (\ref{St}), (\ref{St-Hecke}). 
Ikeda also confirmed the conjecture approximately in some cases. But, as far as we know, there is no examples that confirms this conjecture rigorously.

In this paper, first we deduce a weak version of the above conjecture. More precisely, put
\[C_{h,g}=\dfrac{\displaystyle \langle g, g \rangle \langle h, h \rangle} {\displaystyle \langle  {\mathcal F}_{h,g}, {\mathcal F}_{h,g} \rangle}\displaystyle \Lambda(k+n,\mathrm {St}(g)\otimes f)
\prod_{i=1}^{n}\tilde \xi(2i)\prod_{i=1}^{n-1}\tilde \Lambda(2i+1,f,\mathrm {St}).\]
Then we prove that $C_{h,g}$ belongs to $\mathbb Q(f)\mathbb Q(g)$, where $\mathbb Q(f)$ and $\mathbb Q(g)$
are the Hecke fields of $\mathbb Q(f)$ and $\mathbb Q(g)$, respectively (cf. Theorem \ref{thm.algebraicity-of-period1}). As a result
we prove the algebraicity of $\dfrac {\displaystyle \langle  {\mathcal F}_{h,g}, {\mathcal F}_{h,g} \rangle}{\displaystyle \langle g, \ g \rangle^2\langle f, \ f \rangle^{n-1}  \langle h, h \rangle \Omega_-(f)}$, where $\Omega_{-}(f)$ is Manin-Shimura period for $f$, which will be defined in Section 1 (cf. Theorem \ref{thm.algebraicity-of-period2}).
Our proof relies on the algebraicities of  critical values of the product of the Hecke $L$-functions and the triple product $L$-functions of  elliptic cusp forms and the standard $L$-functions of  Siegel cusp forms (cf. \cite{B-S},\cite{Miz1},\cite{Or}).

Next,  we confirm Ikeda's conjecture rigorously in some cases (cf. Theorems \ref{thm.main-example} and \ref{thm.main-example16}). The idea is to compute the special values of $L$-functions stated above rigorously. We have an algorithm for computing the special values of  the triple product $L$-functions of  elliptic cusp forms (cf. \cite[Theorem 4.9]{I-K-P-Y}). We also have an algorithm for computing the special vales of the product  of the Hecke $L$-functions elliptic cusp forms
(cf. Proposition \ref{prop.product-Hecke-L}). One of key ingredients in the proof of main examples is to compute the standard $L$-functions of  Siegel cusp forms for $Sp_3(\mathbb Z)$ using the pullback formula of the Siegel Eisenstein series. This method has been implicitly used in \cite[Section 5]{I-K-P-Y}. However, there we used the pull-back formula without differential operator. In this paper, we use the pullback formula with differential operator (cf. Theorem \ref{thm.special-pullback}), which allows us to compute the values in question more efficiently (cf. Remark \ref{rem.special-pullback}).

This paper is organized as follows. In Section 2, we introduce several $L$-functions and review their algebraicity. In Section 3, we review the Ikeda-Miyawaki lift and Ikeda's conjecture on its  period, and prove a weaker version of Ikeda's conjecture.
In Section 4, we review the pullback of the Siegel-Eisenstein series. Moreover,  we give an algorithm for computing the special value of the standard $L$-function $L(s,F,\mathrm,{St})$ at $s=k-3$ when  $F$ is a Hecke eigenform of weight $k+2$ for $Sp_3(\mathbb Z)$. In Section 5, we give an algorithm for computing the special value of the product of Hecke $L$-functions of primitive forms for $SL_2(\mathbb Z)$.
Finally, in Section 6, we confirm Ikeda's conjecture in some cases using the results in Sections 5, 6, and \cite[Theorem 4.9]{I-K-P-Y}.

We thank Cris Poor and David Yuen for valuable comments.

{\bf Notation.}  We denote by ${\mathbb Z},$ the ring of rational integers, and by ${\mathbb Q}, {\mathbb R},$ and ${\mathbb C} $ 
the fields of  rational numbers, real numbers, and complex numbers, respectively. Moreover for a prime number $p,$ 
we denote by ${\mathbb Q}_p$ and ${\mathbb Z}_p$ the field of $p$-adic numbers and the ring of $p$-adic integers, respectively. 
We also denote by $\mathrm {ord}_p$ the normalized additive valuation on ${\mathbb Q}_p$ and write ${\bf e}(z)=e^{2 \pi i z}$ for $ z\in \C$.  

For a
commutative ring $R$, we denote by $M_{mn}(R)$ the set of
$(m,n)$-matrices with entries in $R.$  In particular put $M_n(R)=M_{nn}(R).$ 
 For an $(m,n)$-matrix $X$ and an $(m,m)$-matrix
$A$, we write $A[X] = {}^t X A X,$ where $^t X$ denotes the
transpose of $X$.  
Put $\GL_m(R) = \{A \in M_m(R)  \ | \  \det A \in R^\times \}$, where $\det
A$ denotes the determinant of the square matrix $A$, and $R^\times$
denotes the unit group of $R.$  
 Let $S_n(R)$ denote
the set of symmetric matrices of degree $n$ with entries in
$R.$ Furthermore, for an integral domain $R$ of characteristic different 
from $2,$  let ${\mathcal H}_n(R)$
denote the set of half-integral matrices of degree $n$ over
$R,$ that is, ${\mathcal H}_n(R)$ is the set of symmetric
matrices of degree $n$ whose $(i,j)$-component belongs to
$R$ or $\frac12 R$ according as $i=j$ or not. 
For a subset $S$ of
$M_n(R)$ we denote by $S^{\mathrm {nd}}$ the subset of $S$
consisting of non-degenerate matrices.  In particular, if
$S$ is a subset of $S_n({\mathbb R})$ with ${\mathbb R}$ the field
of real numbers, we denote by $S_{>0}$ (resp. $S_{\ge 0}$) the subset of $S$
consisting of positive definite (resp. semi-positive definite) matrices.  Let $R'$ be a
subring of $R.$ Two symmetric matrices $A$ and $A'$ with
entries in $R$ are called equivalent over $R'$ and we write $A \stackrel{\sim}{{ }_{R'}} A'$ if there is
an element $X$ of $\GL_n(R')$ such that $A'=A[X].$ We also write $A \sim A'$ if there is no fear of confusion. 
For square matrices $X$ and $Y$ we write $X \bot Y = \mattwo(X;0;0;Y).$
For an $m \times n$ matrix,  $B=(b_{ij})$ and sequences ${\bf i}=(i_1,\ldots,i_r)$ and ${\bf j}= (j_1,\ldots,j_r)$ of integers such that $1 \le i_1, \ldots, i_r \le m, 1 \le j_1,\ldots,j_r \le n$, we put 
\[B\begin{pmatrix}{\bf i}   \\ {\bf j}\end{pmatrix}=  (b_{i_k,j_l})_{1 \le k,l \le r}.\]
Let $T=(t_{ij})_{1 \le i, j \le m}$ be a symmetric matrix of variables. We say that $P(T)$ is a polynomial in $T$ if it is a polynomial in $t_{ij} \ (1 \le i \le j \le m)$.

\section{Several L-values}

Put $J_n=\mattwo(0_n;-1_n;1_n;\ \ 0_n),$ where $1_n$ denotes the unit matrix of degree $n.$ For a subring $K$ 
of ${\mathbb R}$ put  
$$\GSp_n^+(K)=\{M \in \GL_{2n}(K)   \ | \  J_n[M]=\mu(M) J_n \text  { for  some } \mu(M)>0 \},$$
and 
$$\Sp_n(K)=\{M \in \GL_{2n}(K)   \ | \  J_n[M]= J_n \}.$$
Define the standard subgroup $\varGamma_0^{(n)}(N)$ by 
 $$\varGamma_0^{(n)}(N)=\{\mattwo(A;B;C;D) \in \Sp_n({\mathbb Z}) \ | \ C \equiv 0  \mod N \}.$$
 If $n=1,$ we drop the superscript~$(n).$ 
Let ${\mathbb H}_n$ be Siegel's
upper half-space of degree $n$. For a function $F$ on ${\mathbb H}_n,$ an integer $k$ and $g=\mattwo(A;B;C;D) \in \GSp_n^+({\mathbb R}),$ put
$j(g,Z)=\det (CZ+D),$ and 
$$ \left( F|_k g \right)(Z)= (\det (g))^{k/2} j(g,Z)^{-k}F\left((AZ+B)(CZ+D)^{-1}\right).$$
 Let $k$ be a positive integer.  
 We denote by $M_k(\Sp_n(\mathbb Z))$ or  $M_k^{\infty}(
 \Sp_n(\mathbb Z))$ the space of holomorphic or $C^{\infty}$-modular forms, respectively,  of weight $k$ for  $\Sp_n(\mathbb Z).$ 
  We denote by $S_k(\Sp_n(\mathbb Z))$ the vector subspace of $M_{k}(\Sp_n(\mathbb Z))$ consisting of cusp forms. A holomorphic modular form $F(Z)$ for  $\Sp_n(\mathbb Z)$ has the following Fourier expansion:
\[F(Z)=\sum_{A \in \mathcal H_n(\mathbb Z)_{\ge 0}} c_F(A){\bf e}(\mathrm {tr}(AZ)),\]
and, in particular, a cusp form  has the following Fourier expansion:
\[F(Z)=\sum_{A \in \mathcal H_n(\mathbb Z)_{> 0}} c_F(A){\bf e}(\mathrm {tr}(AZ)),\]
where $\mathrm{tr}$ denotes the trace of a matrix.
Let $F$ and $G$ be elements of $M_k(\Sp_n(\mathbb Z))$ and assume that either one of $F$ and $G$ is a cusp form. We then  define the Petersson scalar product $\langle F, G \rangle $ by
  $$\langle F, G \rangle =
\int_{\Sp_n(\mathbb Z) \backslash \mathbb H_n} F(Z)\overline{ G(Z)}
 (\det Y)^{k-n-1} dXdY \qquad (Z=X+\sqrt{-1}Y).$$ 
 Let $\tilde {\bf L}_n={\mathcal R}_{{\mathbb Q}}(\GSp_n^+({\mathbb Q}),\Sp_{n}({\mathbb Z}))$, 
$ {\bf L}_n^{\circ}={\mathcal R}_{{\mathbb Q}}(\Sp_n({\mathbb Q}),\Sp_{n}({\mathbb Z}))$ be the Hecke rings over ${\mathbb Q}$  
for the Hecke pairs $(\GSp_n^+({\mathbb Q}), \Sp_{n}({\mathbb Z}))$, 
  $(\Sp_n({\mathbb Q}),\Sp_{n}({\mathbb Z}))$,  respectively.  
 The Hecke ring $ {\bf L}_n^{\circ}$ is a subring of 
$\tilde {\bf L}_n.$ Let $T=\Sp_n({\Z})M \Sp_n({\Z}) \in \GSp_n^+({\mathbb Q})$ be an element of 
 $\tilde {\bf L}_n.$ Write $T$ as a disjoint union  $T=\bigcup_{g} \Sp_n({\mathbb Z})g$.   
 For $k \in \N$ and  
for $F \in M_k(\Sp_n({\mathbb Z}))$ we  define the Hecke operator $F|_kT$ 
as 
$$F|_kT=(\det g)^{k/2-(n+1)/2} \sum_{g} F|_kg.$$
We define the elements $T(p)$ and $T_j(p^2)$ of $\tilde {\bf L}_n$ in a standard way (cf. \cite{An}): 
$T(p)= \Sp_n(\Z) (1_n \bot p1_n) \Sp_n(\Z)$ and 
$T_j(p^2){=} \Sp_n(\Z) (1_{n-j} \bot p1_j \bot p^21_{n-j} \bot p1_j) \Sp_n(\Z).$  
For any subring of 
 $R \subseteq \tilde {\bf L}_n$,   we say that  a modular form $F \in M_{k}(\Sp_n({\mathbb Z}))$ is  a Hecke eigenform 
with respect to $R$ if
 $F$ is a common eigenfunction of all $T \in R.$ If $R=\tilde {\bf L}_n,$ we simply say that $F$ is  a Hecke eigenform.
\begin{rem}\label{rem.mult-one}
Let $k \ge n+1$. Then, by the multiplicity one property for $S_{k}(\Sp_n({\mathbb Z}))$ (cf. \cite{A-M-R},\cite{C-L}), a Hecke eigenform   with respect to $ {\bf L}_n^{\circ}$ is a Hecke eigenform also with respect to $\tilde {\bf L}_n$.
\end{rem}
 In this case, we denote by ${\mathbb Q}(F)$ the field generated over ${\mathbb Q}$ by all the Hecke eigenvalues of 
$T \in \tilde {\bf L}_n,$  and call it the Hecke field of $F.$ 
In this section we review several L-values of modular forms that appear in this article.  Let 
$$f(z)=\sum_{m=1}^{\infty} c_{f}(m){\bf e}(mz)$$
 be a primitive form in $S_{k}(\SL_2({\mathbb Z})).$   
For each prime $p$ let $\alpha_p=\alpha_{f,p}$ be a complex number such that 
$\alpha_{p}+\alpha_{p}^{-1}=p^{-k/2+1/2}c_{f}(p).$ 
We define
$$\displaystyle L(s,f)=\prod_p \left((1-\alpha_p p^{k/2-1/2-s})(1-\alpha_p^{-1} p^{k/2-1/2-s}) \right)^{-1}.$$
We write  
$\Gamma_{\mathbb C}(s)=2(2\pi)^{-s}\Gamma(s)$ and write $\Gamma_{\mathbb R}(s)=\pi^{-s/2}\Gamma(s/2)$ as usual. 
Put
\begin{equation}\label{Riemann-zeta}
\widetilde \xi(s)=\Gamma_{\mathbb C}(s)\zeta(s)
\end{equation}
 with $\zeta(s)$ is Riemann's zeta function.
For an integer $l$ satisfying $1 \le l \le k-1$,  put 
$$\Lambda(l,f)=\Gamma_{\mathbb C}(l)L(l,f).$$ 
Then there exist two (positive) real numbers $\period_{+}(f)$ and $\period_{-}(f)$ such that 
 we have (cf. \cite{Sh1}) 
\begin{equation}\label{Per}
{   \dfrac{\Lambda(l,f,\chi)}{\tau(\chi)\period_{j(l,\chi)}(f)}} \in {\mathbb Q}(f) 
\end{equation}
for any integer $1 \le l \le k-1$ and a primitive character $\chi$,
where $j=j(l,\chi)=+$ or $-$ according as $\chi(-1)(-1)^l=1$ or $-1$, and $\tau(\chi)$ is the Gauss sum of $\chi$. We note that $\period_{+}(f)$ and $\period_{-}(f)$ 
are determined by $f$ only up to constant multiples of ${\mathbb Q}(f)^{\times}.$
Fixing $\period_{\pm}(f)$, we define an algebraic part of $L(l,f,\chi)$ by 
\[
L_{alg}(l,f,\chi)=\frac{\Lambda(l,f,\chi)}{\tau(\chi)\period_{j}(f)}.
\]
If $\chi$ is the principal character, we simply denote it by $L_{alg}(l,f)$.
For two positive integers $ l_1,l_2 \le k -1$ and Dirichlet characters $\chi_1,\chi_2$ such that $(\chi_1(-1)\chi_2(-1))=^{l_1+l_2+1}$, we define $L_{alg}(l_1,l_2;f;\chi_1,\chi_2)$ as 
\begin{equation}\label{V1}
L_{alg}(l_1,l_2;f;\chi_1,\chi_2)=\dfrac{\Gamma_{\mathbb C}(l_1)\Gamma_{\mathbb C}(l_2) L(l_1,f,\chi_1)L(l_2,f,\chi_2)}{\sqrt{-1}^{l_1+l_2+1}\langle f,f \rangle \tau((\chi_1\chi_2)_0)}, 
\end{equation}
 where $(\chi_1\chi_2)_0$ is the primitive character associated with $\chi_1\chi_2$. Then it is known that  $L_{alg}(l_1.l_2,f;\chi_1,\chi_2)$ belongs to  ${\mathbb Q}(f)(\chi_1,\chi_2)$, where ${\mathbb Q}(f)(\chi_1,\chi_2)$ is the field generated over $\mathbb Q(f)$ by all the values of $\chi_1$ and $\chi_2$ (cf. \cite{Sh0}, \cite
{Za}).   If $\chi_1$ and $\chi_2$ are principal characters, we simply denote it by $L_{alg}(l_1,l_2;f)$. 
 Let $F$ be  a Hecke eigenform in $S_k(\Sp_{n}({\mathbb Z}))$ with respect to  $ {\bf L}_n^{\circ},$ and 
 let the $p$-Satake parameters of $F$ be 
 $\beta_1(p),\cdots,\beta_n(p)$.   
We then define the standard $L$-function by 
$$L(s,F,\mathrm {St})=\prod_p \{(1-p^{-s})\prod_{i=1}^n (1-\beta_i(p)p^{-s})(1-\beta_i(p)^{-1}p^{-s})\}^{-1}.$$
We put 
\begin{equation}\label{St}
\widetilde \Lambda(s,F,\mathrm {St})=\Gamma_{\mathbb C}(s)
\left( \prod_{i=1}^n \Gamma_{\mathbb C}(s+k-i)\right) L(s,F, \mathrm {St}).
\end{equation}
%Here the gamma factor is different from the one which appears in the functional equation, but this gamma factor is suitable for special values.% 
Let $\rho(n)=3$ if  $n \equiv 1 \mod 4$ and $n \ge 5$, and $\rho(n)=1$ otherwise.  
For a positive integer $m$ put  
 $$L_{alg}(m,F, \mathrm {St})= {\dfrac{\displaystyle \widetilde \Lambda(m,F,\mathrm {St})}{\displaystyle \langle F,F \rangle }}.$$
It is known that $L_{alg}(m,F, \mathrm {St})$ belongs to ${\mathbb Q}(F)$ if 
 $\rho(n) \le m \le k-n$ and $m \equiv n \mod 2$ and all the Fourier coefficients of $F$ belong to ${\mathbb Q}(F)$ (cf. \cite{Bo1}, \cite{Miz1}).
When $F$ is a Ikeda-Miyawaki lift, which we shall explain in Remark \ref{rem.algebraicity-Ikeda-Miyawaki}, the algebraicity holds also for $m=1$. 
For general $F$, it is expected that $L_{alg}(1,F, \mathrm {St})\in {\mathbb Q}(F)$ 
even when $n \equiv 1 \mod 4$ and $n \ge 5,$ 
but this has not been proved in general.  

Let $f_i(z)$ be a primitive Hecke eigenform in $S_{k_i}(\SL_2({\mathbb Z}))$ for $i=1,2,3.$ 
Then we define the triple product $L$-function  
$\displaystyle L(s,f_1 \otimes f_2 \otimes f_3)$ as
$$\displaystyle L(s,f_1 \otimes f_2 \otimes f_3)=\prod_{p}\prod_{i,j,l\in \{1,-1\} }
 \left(1-\alpha_{f_1,p} ^i\alpha_{f_2,p}^j \alpha_{f_3,p}^l \,p^{(k_1+k_2+k_3-3)/2-s} \right)^{-1}.$$  
This satisfies a functional equation for $s \rightarrow k_1+k_2+k_3-2-s$.  
Note that $L(s,f_1 \otimes f_2 \otimes f_3)$ is symmetric in the~$f_i$, so that we may assume that 
 $k_1 \ge k_2 \ge  k_3$ without loss of generality.  
% explanation of balanced case 
In addition, we always assume  that $k_2+k_3 >k_1$ hereafter in this article;  
this case is  called the {\it balanced case}. 
Then there is an integer~$l$ satisfying  $(k_1+k_2+k_3)/2-1\le l \le  k_2+k_3-2$, and  for such $l$,  
we put 
\begin{align*}
&L_{alg}(l,f_1 \otimes f_2 \otimes f_3)=  \\
&{\dfrac{L(l,f_1 \otimes f_2 \otimes f_3) \Gamma_{\mathbb C}(l)\Gamma_{\mathbb C}(l-k_1+1)
\Gamma_{\mathbb C}(l-k_2+1)\Gamma_{\mathbb C}(l-k_3+1)}
{ \langle f_1,f_1 \rangle \langle f_2,f_2 \rangle  \langle f_3,f_3 \rangle}}.
\end{align*}
Then  $L_{alg}(l,f_1 \otimes f_2 \otimes f_3)$ belongs to ${\mathbb Q}(f_1){\mathbb Q}(f_2){\mathbb Q}(f_3)$
 (\cite{Ga}, \cite{Or}, \cite{Sat}) and is also symmetric in the~$f_i$.  
  An algorithm to compute $L_{alg}(l,f_1 \otimes f_2 \otimes f_3)$ has been given in \cite[Section 4]{I-K-P-Y}. 

Finally,  we define $\displaystyle L(s,\mathrm {St}(f_2)\otimes f_1)$ as 
$$\displaystyle L(s,\mathrm {St}(f_2)\otimes f_1)=
\prod_{p} \left(\prod_{i \in \{1,-1 \}} \prod_{j \in \{1,-1,0 \}} (1-\alpha_{f_1,p}^i \alpha_{f_2,p}^{2j}  
\,p^{k_1/2-1/2-s}) \right)^{-1},$$
and put
\begin{equation}\label{St-Hecke}
 \displaystyle \Lambda(s,\mathrm {St}(f_2)\otimes f_1)=\Gamma_{\mathbb C}(s)\Gamma_{\mathbb C}(s -k_1+k_2)
\Gamma_{\mathbb C}(s+ k_2-1 )L(s,\mathrm {St}(f_2)\otimes f_1).
\end{equation}
Moreover we fix the period $\period_{\pm}(f_{1})$ satisfying (\ref{Per}) and 
for a positive integer $l$ with $k_1/2\leq l \leq k_2-1$, put 
$$L_{alg}(l,\mathrm {St}(f_2)\otimes f_1)={ \dfrac{\displaystyle \Lambda(l,\mathrm {St}(f_2)\otimes f_1)}
{\displaystyle \langle f_2,f_2 \rangle^2 \period_{j'}(f_1) }}, $$ 
where $j'=+$ or $-$ according as  $l$ is odd or even. 
(We note that $L_{alg}(l,\mathrm {St}(f_2)\otimes f_1)$ would more properly be denoted as 
$L_{alg}(l,\mathrm {St}(f_2)\otimes f_1; \Omega_{j'}(f_1))$  since it does depend upon the choice of 
$\period_{\pm}(f_1)$, but we use the above short notation.)  
From the Euler product, we note that 
$$L(s+k_2-1,f_2 \otimes f_2 \otimes f_1)=L(s,\mathrm {St}(f_2) \otimes f_1)L(s,f_1)$$
and we have 
\begin{align}\label{V2}
&L_{alg}(l+k_2-1,f_2 \otimes f_2 \otimes f_1) \\
&=L_{alg}(l,\mathrm {St}(f_2) \otimes f_1)L_{alg}(l,f_1)
 \frac{\Omega_{+}(f_1)\Omega_{-}(f_1)}{\langle f_1,f_1\rangle}. \notag
\end{align}
We also note that $\Lambda(l,f_1) /\Omega_j(f_1)$ belongs to ${\mathbb Q}(f_1)^{\times}$ for  $k _1/2 +1 \le l \le  k_2-1,$ 
since we assumed $ k_1/2 +1 \le  k_2 \le k_1$ and 
the Euler product of $L(s,f_1)$ converges in this range.
Hence $L_{alg}(l,\mathrm {St}(f_2)\otimes f_1)$ belongs to  ${\mathbb Q}(f_1){\mathbb Q}(f_2)$ and non-zero for $k _1/2 +1 \le l \le  k_2-1$.  

%%%%%%%%%%%%%%%%%%%%%%%%%%%%%

\section{Ikeda-Miyawaki lift}
Throughout this section, fix positive integers  $k,n$  such that $k \equiv n+1 \mod 2$ 
and  $k >n$.  
We denote by $S_{k+1/2}^+(\varGamma_0(4))$ the Kohnen plus subspace of weight $k+1/2$ for $\varGamma_0(4)$.
For the detail of the Kohnen plus subspace, see \cite{Ko}. 
For two elements $h_1$ and $h_2$ of $S_{k+1/2}^+(\varGamma_0(4))$, we define the Petersson norm
$\langle h_1, \ h_2 \rangle$ as
\[\langle h_1, \ h_2\rangle =\frac{1}{6}\int_{\varGamma_0(4) \backslash \mathbb H_1} h_1(z)\overline{h_2(z)}y^{k-3/2}dx dy \quad  (z=x+\sqrt{-1}y).\]
Let $h$   be a Hecke eigenform in  $S_{k+1/2}^+(\varGamma_0(4))$  
with Fourier expansion 
$$h(z)=\sum_{m \in {\N}, \ (-1)^{k}m \equiv 0,1 \mod 4}c_h(m){\bf e}(mz)$$
 and let $g$ be a primitive form in $S_{k+n+1}(\SL_2({\mathbb Z}))$ with Fourier expansion 
 $$g(z)=\sum_{m=1}^{\infty} c_g(m){\bf e}(mz).$$
Moreover let 
$$f(z)=\sum_{m=1}^{\infty}c_f(m){\bf e}(mz)$$
be  the   primitive form in $S_{2k}(\SL_2({\mathbb Z}))$ corresponding to $h$ under the  Shimura correspondence.  
Let $\alpha_p \in {\mathbb C}$ be taken such that $\alpha_p+\alpha_p^{-1}=p^{-k+1/2} c_f(p).$ 
For $T \in {{\mathcal H}_{2n+2}(\mathbb Z)}_{> 0},$ define $c_{I_{2n+2}(h)}(T)$  as
$$c_{I_{2n+2}(h)}(T)=c_{h}(|{\mathfrak d}_T|)\, {\textfrak f}_T^{k-1/2} 
\prod_{p \mid {\mathfrak f}_T} \alpha_p^{-\mathrm {ord}_p({\mathfrak f}_T)} F_p(T,p^{-n-3/2}\alpha_p),$$
where ${\mathfrak d}_T$ is the discriminant of ${\mathbb Q}(\sqrt{(-1)^{n+1} \det (2T)}), 
\ {\mathfrak f}_T= \sqrt{ (-1)^{n+1}\det (2T)/{\mathfrak d}_T}$, 
and $F_p(T,X)$ is a  polynomial in $X$  with coefficients in ${\mathbb Q},$ which will be defined in Section 4.  
Define a Fourier series $I_{2n+2}(h)(Z)$ for $Z \in {\mathbb H}_{2n+2}$: 
$$I_{2n+2}(h)(Z)= \sum_{T \in {{\mathcal H}_{2n+2}(\mathbb Z)}_{> 0}} c_{I_{2n+2}(h)}(T){\bf e}(\mathrm {tr}(TZ)).$$
Then $I_{2n+2}(h)$ is a Hecke eigenform in  $S_{k+n+1}(\Sp_{2n+2}({\mathbb Z}) )$, 
see  \cite{Ik1}.  A proof that $I_{2n+2}(h)$ is a Hecke eigenform 
for the entire Hecke algebra ${\tilde {\bf \ L}}_{2n+2}$, 
not just the even part  $ {\bf \ L}^{\circ}_{2n+2}$, 
may be found in \cite{Kat22}.  
We call  $I_{2n+2}(h)$ the Duke-Imamo{\=g}lu-Ikeda lift of $h$ (or of $f$) to 
$S_{k+n+1}(\Sp_{2n+2}({\mathbb Z}) ).$
For $z \in {\mathbb H}_{2n+1}$ and $ w=x+iy \in {\mathbb H}_1, $  define
$$\displaystyle {\mathcal F}_{h,g}(z)=\int_{\SL_2({\mathbb Z}) \backslash {\mathbb H}_1} 
I_{2n+2}(h)(\mattwo(z;0;0;w)) \overline{g(w)} y^{k+n-1} dw.$$
Ikeda \cite{Ik2} showed the following:
\smallskip

{\it  If $\displaystyle {\mathcal F}_{h,g}(z)$ is not identically zero  in 
$S_{k+n+1}( \Sp_{2n+1}({\mathbb Z}))$, 
then it is a Hecke eigenform  with respect to ${\bf L}_{2n+1}^{\circ}$ and 
its standard $L$-function is given by 
\begin{align}\label{I-M}
L(s,{\mathcal F}_{h,g},\mathrm {St})
=L(s,g,\mathrm {St})\prod_{i=1}^{2n} L(s+k+n-i,f). 
\end{align}}
This is a part of Ikeda's results: the case $r=1$ in \cite{Ik2}. 
The existence of this type of Hecke eigenform was conjectured by Miyawaki \cite{Miy}; therefore, 
we call ${\mathcal F}_{h,g}$ 
the Ikeda-Miyawaki lift of $h$ and $g$ when ${\mathcal F}_{h,g}$ is not identically zero.
By Remark \ref{rem.mult-one}, it is  a Hecke eigenform also with respect to 
$\tilde {\bf L}_{2n+1}$. 

\smallskip

Ikeda proposed the following conjecture relating the Petersson norm of ${\mathcal F}_{h,g}$ 
to those of $f$ and $g$.  
\begin{conj}\label{conj.A}  (Ikeda \cite{Ik2})  
Let $k>n$ be positive integers with $k+n$ odd.  
Let $h\in S_{k+1/2}^+(\varGamma_0(4))$ be a Hecke eigenform corresponding to 
the primitive eigenform $f \in S_{2k}(\SL_2({\mathbb Z}))$ under the Shimura correspondence.  
Let $g \in S_{k+n+1}(\SL_2({\mathbb Z}))$ be a primitive eigenform.  Then we have
$$2^{(2n-1)k+4n}\dfrac {\displaystyle \langle  {\mathcal F}_{h,g}, {\mathcal F}_{h,g} \rangle}{\displaystyle \langle g, g \rangle \langle h, h \rangle}=\displaystyle \Lambda(k+n,\mathrm {St}(g)\otimes f)
\prod_{i=1}^{n}\tilde \xi(2i)\prod_{i=1}^{n-1}\tilde \Lambda(2i+1,f,\mathrm {St}).$$
\end{conj}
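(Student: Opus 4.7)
The plan is to attack the conjecture in two stages. First, I would prove a weak version asserting that the ratio $C_{h,g}$ between the two sides lies in the Hecke field $\mathbb{Q}(f)\mathbb{Q}(g)$, and then, in cases where every $L$-value in the formula can be computed exactly, verify that this algebraic number equals the predicted constant $2^{(2n-1)k+4n}$. This two-stage structure reflects that the algebraic methods available determine $C_{h,g}$ only up to a rational ambiguity, while explicit numerical computation is needed to pin down the actual constant.

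For the weak form, I would set
\[
C_{h,g} = \dfrac{\langle g,g\rangle \langle h,h\rangle}{\langle \mathcal{F}_{h,g}, \mathcal{F}_{h,g}\rangle}\,\Lambda(k+n, \mathrm{St}(g)\otimes f)\prod_{i=1}^n \widetilde{\xi}(2i)\prod_{i=1}^{n-1}\widetilde{\Lambda}(2i+1, f, \mathrm{St})
\]
and unfold $\langle \mathcal{F}_{h,g}, \mathcal{F}_{h,g}\rangle$ using the defining pullback integral: pairing one copy of $\mathcal{F}_{h,g}$ against the integral definition of the other converts the norm into a pairing on $\mathbb{H}_{2n+1}\times\mathbb{H}_1$ of $I_{2n+2}(h)$ against $\mathcal{F}_{h,g}\otimes\overline{g}$. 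Combining this with the Duke-Imamo\=glu-Ikeda norm identity (which relates $\langle I_{2n+2}(h), I_{2n+2}(h)\rangle$ to $\langle h, h\rangle$ times the standard $L$-values of $f$) and the factorisation (\ref{I-M}) of the standard $L$-function of $\mathcal{F}_{h,g}$, the ratio $C_{h,g}$ becomes a ratio of algebraic parts of critical $L$-values. Applying the three rationality results from Section 2, namely the algebraicity of $\widetilde{\Lambda}(2i+1,f,\mathrm{St})/\langle f,f\rangle$ (\cite{Miz1}), the rationality of $\widetilde{\xi}(2i)$ up to powers of $\pi$, and the identity (\ref{V2}) that reduces $\Lambda(k+n, \mathrm{St}(g)\otimes f)$ to the triple product value $L_{alg}(\cdot, g\otimes g\otimes f)\in\mathbb{Q}(f)\mathbb{Q}(g)$ (\cite{Or}) combined with the algebraic parts of $L(\cdot,f)$, yields $C_{h,g}\in \mathbb{Q}(f)\mathbb{Q}(g)$, provided the Shimura periods $\Omega_{\pm}(f)$ appearing in the individual factors are tracked carefully and cancel in the final combination.

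For the rigorous verification in specific cases, I would specialise to small $n$, so that the Siegel form lives on $\Sp_{2n+1}(\mathbb{Z})$ with the relevant space of small dimension and eigenforms explicitly tabulated, and compute every $L$-value on the right-hand side numerically but exactly. The triple product factor is handled by the algorithm of \cite[Theorem 4.9]{I-K-P-Y}; the product $\prod \widetilde{\Lambda}(2i+1,f,\mathrm{St})$ by the algorithm of Section 5; and the special value of the standard $L$-function of $\mathcal{F}_{h,g}$ by the differential-operator pullback formula of Section 4, which extracts the relevant critical value from an inner product against a pullback of a Siegel Eisenstein series on a larger symplectic group. The Petersson norm $\langle \mathcal{F}_{h,g},\mathcal{F}_{h,g}\rangle$ is then either read off via the weak form above or computed directly from the pullback definition of $\mathcal{F}_{h,g}$, reducing the statement to a finite rational check.

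The main obstacle is the quantitative passage from $C_{h,g}\in\mathbb{Q}(f)\mathbb{Q}(g)$ to $C_{h,g}=2^{(2n-1)k+4n}$: algebraicity machinery alone cannot identify a specific rational number, and a general proof of the conjecture would presumably require a doubling or Rankin-Selberg integral representation combined with an explicit archimedean local zeta integral computation, neither of which is accessible by the methods here. Secondary technical obstacles are the careful bookkeeping of the $2$-power normalisations arising from the half-integral weight form $h$ and of the dependence on the choice of $\Omega_{\pm}(f)$ throughout the unfolding, both of which must be controlled precisely in order that the Hecke-field cancellation in the weak version is not obscured by extraneous transcendental factors.
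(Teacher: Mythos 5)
Your two-stage architecture --- first an algebraicity statement $C_{h,g}\in\mathbb{Q}(f)\mathbb{Q}(g)$, then exact evaluation of every $L$-value in low-dimensional cases --- is precisely the paper's strategy, and your second stage matches the paper's implementation: the triple product value via \cite[Theorem 4.9]{I-K-P-Y}, the products of Hecke $L$-values via Proposition \ref{prop.product-Hecke-L}, and the standard $L$-value of the degree three lift via the differential-operator pullback of the Siegel Eisenstein series (Theorem \ref{thm.special-pullback}). You are also correct that these methods can only confirm the conjecture case by case, not prove it in general.

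However, your mechanism for the weak version has a genuine gap. Unfolding $\langle\mathcal{F}_{h,g},\mathcal{F}_{h,g}\rangle$ against the defining integral is a tautology: since $I_{2n+2}(h)(\diag(z,w))=\sum_i\widetilde{\mathcal{F}}_{h,g_i}(z)g_i(w)$ over a Hecke eigenbasis $\{g_i\}$ of $S_{k+n+1}(\SL_2(\mathbb{Z}))$, the $w$-integration simply reproduces $\mathcal{F}_{h,g}=\langle g,g\rangle\widetilde{\mathcal{F}}_{h,g}$, and no $L$-value appears. The Duke-Imamo{\=g}lu-Ikeda norm identity you then invoke evaluates $\langle I_{2n+2}(h),I_{2n+2}(h)\rangle$ on all of $\mathbb{H}_{2n+2}$; it gives no control over the restricted pairing of $I_{2n+2}(h)|_{\mathbb{H}_{2n+1}\times\mathbb{H}_1}$ against $\widetilde{\mathcal{F}}_{h,g}\otimes g$ --- indeed, expressing that restricted pairing through $L$-values is essentially the content of Ikeda's conjecture itself, so this step is circular. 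The paper never unfolds the norm. Instead, after normalizing $h$ so that the Fourier coefficients of $\widetilde{\mathcal{F}}_{h,g}$ lie in $\mathbb{Q}(f)\mathbb{Q}(g)$, it applies the B\"ocherer--Mizumoto algebraicity theorem for standard $L$-values of Siegel Hecke eigenforms with algebraic Fourier coefficients (\cite{Bo1}, \cite{Miz1}) to the lift $\widetilde{\mathcal{F}}_{h,g}$ itself; this is what makes the norm $\langle\widetilde{\mathcal{F}}_{h,g},\widetilde{\mathcal{F}}_{h,g}\rangle$, sitting in the denominator of $L_{alg}(l,\widetilde{\mathcal{F}}_{h,g},\mathrm{St})$, algebraically accessible. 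The factorization (\ref{I-M}) then converts that value into $L$-values of $f$ and $g$, and the Kohnen-Zagier formula converts $\langle h,h\rangle$ into $|c_h(|D|)|^2\langle f,f\rangle/L(k,f,\chi_D)$ up to explicit factors (Theorems \ref{thm.standard-L-of-IM-lift1}, \ref{thm.standard-L-of-IM-lift2}, \ref{thm.algebraicity-of-period1}). These three ingredients --- rationality of the lift's Fourier coefficients, the Siegel algebraicity theorem applied to the lift rather than only to $f$, and Kohnen-Zagier for $\langle h,h\rangle$ --- are absent from your argument, and without them the transcendental quantities $\langle\mathcal{F}_{h,g},\mathcal{F}_{h,g}\rangle$ and $\langle h,h\rangle$ never cancel from $C_{h,g}$.
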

The expression in the above conjecture appears to be different from the one in \cite{Ik2} but 
it is the same since we have $\widetilde{\Lambda}(1,f,St)=2^{2k}\langle f,f \rangle$ (see \cite{Za}).
We consider the case that ${\mathcal F}_{h,g}$ is not identically zero. Then, the above conjecture can be rewritten as follows:
\begin{conj}\label{conj.A'} In additions to the notation and the assumption as in Conjecture \ref{conj.A}, assume that the 
Ikeda-Miyawaki lift ${\mathcal F}_{h,g}$ is not identically zero.  
 Put 
$$C_{h,g}=\dfrac{\displaystyle \langle g, g \rangle \langle h, h \rangle} {\displaystyle \langle  {\mathcal F}_{h,g}, {\mathcal F}_{h,g} \rangle}\displaystyle \Lambda(k+n,\mathrm {St}(g)\otimes f)
\prod_{i=1}^{n}\tilde \xi(2i)\prod_{i=1}^{n-1}\tilde \Lambda(2i+1,f,\mathrm {St}).$$
Then
$$C_{h,g}=2^{(2n-1)k+4n}.$$
\end{conj}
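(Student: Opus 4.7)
The plan is to compute $\langle {\mathcal F}_{h,g}, {\mathcal F}_{h,g} \rangle$ directly via a doubling-integral/see-saw argument that leverages (i) Ikeda's integral definition of ${\mathcal F}_{h,g}$, (ii) the factorization (\ref{I-M}) of $L(s,{\mathcal F}_{h,g},\mathrm{St})$, (iii) a see-saw identity linking the standard doubling integral for $I_{2n+2}(h)$ with a smaller pairing involving ${\mathcal F}_{h,g}$ and $g$, and (iv) the (conjecturally known) Petersson-norm formula for the Duke-Imamo\={g}lu-Ikeda lift $I_{2n+2}(h)$.

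First, substituting the defining integral of ${\mathcal F}_{h,g}$ into its own Petersson norm and interchanging the order of integration gives
\[
\langle {\mathcal F}_{h,g}, {\mathcal F}_{h,g} \rangle = \bigl\langle I_{2n+2}(h)\bigl(\diag(z,w)\bigr),\, {\mathcal F}_{h,g}(z)\, g(w) \bigr\rangle,
\]
the inner product being on $\Sp_{2n+1}({\mathbb Z})\backslash {\mathbb H}_{2n+1} \times \SL_2({\mathbb Z})\backslash {\mathbb H}_1$. This realizes the norm as a Rankin-Selberg pairing of $I_{2n+2}(h)$, pulled back along the diagonal $\Sp_{2n+1}\times \Sp_1 \hookrightarrow \Sp_{2n+2}$, against ${\mathcal F}_{h,g}\otimes g$.

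Next, I would evaluate this pairing by inserting a Siegel-Eisenstein series $E_{4n+4}(Z,s)$ of degree $4n+4$ and invoking a see-saw between the doubling embedding $\Sp_{2n+2}\times \Sp_{2n+2}\hookrightarrow \Sp_{4n+4}$ and the embedding $\Sp_{4n+2}\times \Sp_2 \hookrightarrow \Sp_{4n+4}$, whose common subgroup is $(\Sp_{2n+1}\times \Sp_1)^2$. Pairing $E_{4n+4}(\cdot,s)$ against $I_{2n+2}(h)\otimes \overline{I_{2n+2}(h)}$ via the first embedding produces, by the standard Garrett-B\"ocherer doubling identity,
\[
L(s, I_{2n+2}(h), \mathrm{St})\cdot \langle I_{2n+2}(h), I_{2n+2}(h)\rangle
\]
times an explicit archimedean factor. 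Evaluating the same pairing through the second embedding---pulling $E_{4n+4}$ back to $\Sp_{2n+1}^2\times \Sp_1^2$ by iterating a differential-operator pullback of Shimura-B\"ocherer-Ibukiyama type that generalizes Theorem \ref{thm.special-pullback}---decomposes spectrally; the $({\mathcal F}_{h,g},g)$-Hecke isotypic component produces $\Lambda(k+n,\mathrm{St}(g)\otimes f)\cdot \langle {\mathcal F}_{h,g}, {\mathcal F}_{h,g}\rangle\cdot \langle g,g\rangle$ accompanied by an explicit product of $\widetilde\xi(2i)$ factors arising from the diagonal $\SL_2\times \SL_2$-integral. Substituting the factorization $L(s,I_{2n+2}(h),\mathrm{St})=\zeta(s)\prod_{i=1}^{2n+2}L(s+k+n+1-i,f)$ together with the conjectural Petersson-norm formula $\langle I_{2n+2}(h), I_{2n+2}(h)\rangle = c_n\langle h,h\rangle \prod_{i=1}^{2n+1}\widetilde\Lambda(i,f,\mathrm{St})$ (with $c_n$ an explicit power of $2$), specializing at the critical value $s_0$ dictated by the weight $k+n+1$, and observing that the even-index $\widetilde\Lambda(2i,f,\mathrm{St})$ factors appearing in the $I$-period cancel against matching factors emerging from the $g$-side doubling, one arrives at the identity of Conjecture \ref{conj.A'} with the asserted constant $2^{(2n-1)k+4n}$.

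The main obstacle is the explicit execution of the iterated differential-operator pullback: operators of the required type exist in principle, but their Petersson pairings at the relevant critical point have not been computed in the generality required, and pinning down both the power of $2$ and the factor $\prod_{i=1}^{n}\widetilde\xi(2i)$ depends on their precise archimedean contribution. Secondly, the Petersson-norm conjecture for $I_{2n+2}(h)$ is itself not yet proven in full---known in many cases by work of Katsurada and others---so any such argument would be conditional on it. Finally, tracking the exponent $(2n-1)k+4n$ requires an entirely unambiguous accounting of the Kohnen-plus-space conventions, the factor $1/6$ in $\langle h,h\rangle$, all Haar-measure normalizations along the chain of embeddings, and the Eisenstein-series normalization---historically the most error-prone step in this class of computations, and the reason Ikeda himself only verified the identity approximately.
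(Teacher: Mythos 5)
You have not produced a proof, and the gap is not a technicality you can defer: it is the entire content of the statement. Note first that what you are trying to prove is Conjecture \ref{conj.A'}, i.e.\ a reformulation of Ikeda's period conjecture (Conjecture \ref{conj.A}), and the paper itself does not prove it in general. What the paper establishes is (i) a weak version, Theorem \ref{thm.algebraicity-of-period1}, asserting only that $C_{h,g}$ lies in $\mathbb{Q}(f)\mathbb{Q}(g)$, deduced from the identity of Theorem \ref{thm.standard-L-of-IM-lift2} expressing $C_{h,g}$ through algebraic critical values, and (ii) rigorous confirmations in two concrete cases ($k=10$, $n=1$ in Theorem \ref{thm.main-example}; $k=14$, $n=1$ in Theorem \ref{thm.main-example16}), obtained by exactly computing every $L$-value entering (\ref{PIM}): the triple product value via \cite[Theorem 4.9]{I-K-P-Y}, the product of Hecke $L$-values via Proposition \ref{prop.product-Hecke-L}, and the degree-three standard $L$-value via the pullback formula with differential operator, Theorem \ref{thm.special-pullback}, together with machine computation. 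There is no general proof anywhere, by the authors or by Ikeda, against which your see-saw argument could be matched as ``the same'' or ``a different'' route to the theorem; at best it is a strategy toward a statement that remains open.

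As a strategy, it fails at precisely the decisive point. Since the algebraicity of $C_{h,g}$ is already known (that is the paper's weak version), the only content of the conjecture is the exact constant $2^{(2n-1)k+4n}$. In your scheme that constant is produced by the archimedean factors of the iterated differential-operator pullback and of the doubling integral, which you yourself concede ``have not been computed in the generality required''; so after all the structural manipulations you are left exactly where Theorem \ref{thm.algebraicity-of-period1} already stands, with an unspecified algebraic (indeed, at best rational) constant in place of $2^{(2n-1)k+4n}$. Compounding this, you feed in the Petersson-norm formula $\langle I_{2n+2}(h),I_{2n+2}(h)\rangle = c_n\,\langle h,h\rangle\prod_{i=1}^{2n+1}\widetilde{\Lambda}(i,f,\mathrm{St})$ for the Duke-Imamo{\=g}lu-Ikeda lift, which you treat as conjectural: this is Ikeda's period conjecture for the lift $I_{2n+2}(h)$, a statement of the same nature and depth as Conjecture \ref{conj.A'} itself (it is the template from which \cite{Ik2} formulated the present conjecture), so conditioning on it with an undetermined constant $c_n$ gives an argument that is both conditional and, in effect, circular in the constant you need. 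A correct treatment in the current state of the art is what the paper carries out: reduce $C_{h,g}$ to finitely many computable critical values and evaluate them exactly, case by case.
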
	
Now we modify  ${\mathcal F}_{h,g}$ and put 
$$\widetilde {\mathcal F}_{h,g}=\langle g, g \rangle^{-1} {\mathcal F}_{h,g}.$$ 
Then, we note that we have 
\begin{align}\label{C}
C_{h,g}&=\dfrac{\displaystyle  \langle h, h \rangle} {\displaystyle \langle g, g \rangle \langle  \widetilde {\mathcal F}_{h,g}, \ \widetilde {\mathcal F}_{h,g} \rangle} \\
& \times  \Lambda(k+n,\mathrm {St}(g)\otimes f)
\prod_{i=1}^{n}\tilde \xi(2i)\prod_{i=1}^{n-1}\tilde \Lambda(2i+1,f,\mathrm {St}).\notag
\end{align}
Let $\{g_i \}_{i=1}^{d_2}$ be a basis of   $S_{k+n+1}(\SL_2({\mathbb Z}))$ consisting of Hecke eigenforms, then 
 from the definition of the Ikeda-Miyawaki lift, we easily have:  
$$I_{2n+2}(h)(\mattwo(z;0;0;w)) =\sum_{i=1}^{d_2} \widetilde {\mathcal F}_{h,g_i}(z)g_i(w).$$
We obtain
$$ I_{2n+2}(h)(\mattwo(z;0;0;w)) =\sum_{A>0, m>0}c_{2n+1,h}(A,m){\bf e}(\mathrm {tr}(Az)){\bf e}(mw),$$
where 
$$c_{2n+1,h}(A,m)=\sum_{r} c_{I_{2n+2}(h)}(\mattwo(A;r/2;{}^tr/2;m)).$$

By replacing the Hecke eigenform $h$ by a constant multiple of $h$, 
we can assume that every Fourier coefficient $c_h(m)$ belongs to ${\mathbb Q}(f)$ for any $m >0.$ 
If we assume so, then by definition, $c_{I_{2n+2}(h)}(T)$ belongs to
${\mathbb Q}(f)$ for any $T \in {{\mathcal H}_{2n+2}}_{> 0}$ and, therefore,  
so does $c_{2n+1,h}(A,m) $ for any $A \in  {{\mathcal H}_{2n+1}}_{> 0}$ and $m >0.$  
Hence, the $A$-th Fourier coefficient $c_{\widetilde {\mathcal F}_{h,g}}(A)$  of 
$\widetilde {\mathcal F}_{h,g}$ belongs to ${\mathbb Q}(f){\mathbb Q}(g)$ for any 
$A \in {{\mathcal H}_{2n+1}}_{> 0}.$  
%Actually, in this section we do not assume any normalization of $h$ since every complete expression below is invariant  if we change $h$ by a constant multiple. %

The following theorem is a refinement of \cite[Corollary to Theorem 3.1]{I-K-P-Y}. 
\bigskip

\begin{thm}\label{thm.standard-L-of-IM-lift1}
We assume that $k>n$ and $n$ is odd. 
We fix a real number $\period_{\pm}(f)$ which satisfies {\rm  (\ref{Per})} of Section~2.   Then
\begin{align*}
&|c_{\widetilde {\mathcal F}_{h,g}}(A)|^2 L_{alg}(l, \widetilde {\mathcal F}_{h,g} , \mathrm {St})
=(-1)^nC_{h,g}{\dfrac{ | c_{{\widetilde {\mathcal F}}_{h,g}}(A)|^2\, \period_{-}(f)}{\langle h,h \rangle }} \\
\times  &  { \dfrac{L_{alg}(k+n,f) L_{alg}(l,g ,\mathrm {St}) \prod_{i=1}^{n} L_{alg}(l+k+n-2i+1,l+k+n-2i;f)}
{ L_{alg}(2k+2n, g \otimes g \otimes f)\prod_{i=1}^{n-1} L_{alg}(2i+1,f,\mathrm {St}) \prod_{i=1}^n \widetilde \xi(2i)}}
\end{align*}
 for any positive definite half-integral  matrix  $A$ of degree $2n+1$ and  
an odd integer~$l$ with  $1 \le l \le k-n$. 
\end{thm}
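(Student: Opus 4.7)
The strategy is to unfold every algebraic $L$-value appearing on both sides of the asserted identity and reduce the equality to Ikeda's Euler product decomposition \eqref{I-M}, the definition of $C_{h,g}$ from Conjecture~\ref{conj.A'}, and the identity \eqref{V2}. Since $|c_{\widetilde{\mathcal F}_{h,g}}(A)|^{2}$ appears identically on both sides we may factor it out (the assertion is trivial if the Fourier coefficient vanishes), so it suffices to prove the corresponding equality of scalars. The identity is purely algebraic: no conjecture is used, because $C_{h,g}$ is defined directly in terms of $\langle \mathcal F_{h,g},\mathcal F_{h,g}\rangle$.

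First, combining $\widetilde{\mathcal F}_{h,g}=\langle g,g\rangle^{-1}\mathcal F_{h,g}$ with \eqref{I-M} and matching gamma factors gives
\[
L_{alg}(l,\widetilde{\mathcal F}_{h,g},\mathrm{St})=\frac{\langle g,g\rangle^{2}\,\widetilde\Lambda(l,g,\mathrm{St})\,\prod_{i=1}^{2n}\Lambda(l+k+n-i,f)}{\langle\mathcal F_{h,g},\mathcal F_{h,g}\rangle},
\]
since the $2n+1$ Siegel gamma factors attached to $\mathcal F_{h,g}$ (degree $2n+1$, weight $k+n+1$) split as the $2$ gamma factors of $\widetilde\Lambda(l,g,\mathrm{St})$ (with $g$ treated as a degree-$1$ Siegel modular form of weight $k+n+1$) together with the $2n$ Hecke gamma factors $\Gamma_{\mathbb C}(l+k+n-i)$. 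Using the definition of $C_{h,g}$ to eliminate $\langle\mathcal F_{h,g},\mathcal F_{h,g}\rangle$ expresses the left-hand side in terms of $C_{h,g}$, $\langle h,h\rangle$, $\Lambda(k+n,\mathrm{St}(g)\otimes f)$, and the products $\prod_{i}\widetilde\xi(2i)$ and $\prod_{i}\widetilde\Lambda(2i+1,f,\mathrm{St})$.

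Next, convert every completed $L$-value to its algebraic counterpart using the definitions from Section~2: $\widetilde\Lambda(l,g,\mathrm{St})=\langle g,g\rangle L_{alg}(l,g,\mathrm{St})$ and $\widetilde\Lambda(2i+1,f,\mathrm{St})=\langle f,f\rangle L_{alg}(2i+1,f,\mathrm{St})$. Because $n$ is odd we have $k$ even and $k+n$ odd, so the period attached to $L_{alg}(k+n,\mathrm{St}(g)\otimes f)$ is $\Omega_{+}(f)$ and hence
\[
\Lambda(k+n,\mathrm{St}(g)\otimes f)=\langle g,g\rangle^{2}\,\Omega_{+}(f)\,L_{alg}(k+n,\mathrm{St}(g)\otimes f).
\]
The remaining product of $2n$ Hecke values is grouped into $n$ consecutive pairs: setting $l_{1}=l+k+n-2i+1$, $l_{2}=l+k+n-2i$, formula \eqref{V1} yields $\Lambda(l_{1},f)\Lambda(l_{2},f)=\sqrt{-1}^{\,l_{1}+l_{2}+1}\langle f,f\rangle\,L_{alg}(l_{1},l_{2};f)$, and since $l+k+n$ is even, $\sqrt{-1}^{\,l_{1}+l_{2}+1}=-1$ for every $i$, producing an overall factor $(-1)^{n}\langle f,f\rangle^{n}$.

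Finally, apply \eqref{V2} with $f_{1}=f$, $f_{2}=g$, $k_{2}=k+n+1$, and the internal parameter equal to $k+n$ (so that $l+k_{2}-1=2k+2n$) to replace $L_{alg}(k+n,\mathrm{St}(g)\otimes f)$ in the denominator by
\[
\frac{L_{alg}(2k+2n,g\otimes g\otimes f)\,\langle f,f\rangle}{L_{alg}(k+n,f)\,\Omega_{+}(f)\,\Omega_{-}(f)}.
\]
After this substitution, all factors $\langle g,g\rangle$, $\langle f,f\rangle$, and $\Omega_{+}(f)$ cancel, leaving precisely the right-hand side of the theorem with the sign $(-1)^{n}$ and the single surviving period $\Omega_{-}(f)$. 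The main (and only) delicate point is the parity bookkeeping: tracking the residues modulo $2$ of $l$, $k$, $n$, and $k+n$ to force the correct choice of $\Omega_{\pm}(f)$ at each occurrence and to produce the sign $(-1)^{n}$ from \eqref{V1}; once these parities are pinned down by the hypothesis $k\equiv n+1\pmod 2$ with $n$ odd, the identity drops out by direct substitution.
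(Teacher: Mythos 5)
Your proposal is correct and follows essentially the same route as the paper's proof, which cites exactly the same ingredients: the Euler product decomposition (\ref{I-M}), the definition of $C_{h,g}$ via (\ref{C}), the pairing identity (\ref{V1}) with the sign $\sqrt{-1}^{l_1+l_2+1}=-1$ (whence $(-1)^n$), and the substitution (\ref{V2}) to trade $\Lambda(k+n,\mathrm{St}(g)\otimes f)$ for the triple product value. Your write-up simply makes explicit the gamma-factor bookkeeping and period-parity checks that the paper leaves to the reader.
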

\begin{proof}
The assertion directly follows from (\ref{V1}),(\ref{V2}), (\ref{I-M}) and (\ref{C}) remarking that
\begin{align*}
&L_{alg}(l+k+n-2i+1,l+k+n-2i;f) \\
&=-\frac{\Gamma_{\mathbb C}(l+k+n-2i+1)\Gamma_{\mathbb C}(l+k+n-2i)L(l+k+n-2i+1,f)L(l+k+n-2i,f)}{\langle f, \ f \rangle}\end{align*}
for any $1 \le i \le n$.
\end{proof}
\begin{rem} Theorem \ref{thm.standard-L-of-IM-lift1} has been proved in  \cite[Corollary to Theorem 3.1]{I-K-P-Y} assuming Conjecture \ref{conj.A'}. We also note that the definition of $L_{alg}(l+k+n-2i+1,l+k+n-2i;f)$ is a little different from that in \cite{I-K-P-Y}.
\end{rem}

Rewriting Theorem \ref{thm.standard-L-of-IM-lift1}, we have the following result.

\begin{thm}\label{thm.standard-L-of-IM-lift2} Let the notation be as in Theorem \ref{thm.standard-L-of-IM-lift1}. Then
\begin{align*}
&|c_{\widetilde {\mathcal F}_{h,g}}(A)|^2 L_{alg}(l, \widetilde {\mathcal F}_{h,g} , \mathrm {St}) \\
&=(-1)^{n+[(n+1)/2]}2^{k-1}|D|^k C_{h,g}{\dfrac{ | c_{{\widetilde {\mathcal F}}_{h,g}}(A)|^2}{|c_h(|D|)|^2}}L_{alg}(k+n,k;f;1,\chi_D) \\
\times  &  { \dfrac{ L_{alg}(l,g ,\mathrm {St}) \prod_{i=1}^{n} L_{alg}(l+k+n-2i+1,l+k+n-2i;f)}
{ L_{alg}(2k+2n, g \otimes g \otimes f)\prod_{i=1}^{n-1} L_{alg}(2i+1,f,\mathrm {St}) \prod_{i=1}^n \widetilde \xi(2i)}}
\end{align*}
 for any positive definite half-integral  matrix  $A$ of degree $2n+1$, a fundamental discriminant $D$ such that $c_h(|D|) \not=0$ and  
an odd integer~$l$ with  $1 \le l \le k-n$. In particular, if $c_h(1) \not=0$, then 
\begin{align}\label{PIM}
&|c_{\widetilde {\mathcal F}_{h,g}}(A)|^2 L_{alg}(l, \widetilde {\mathcal F}_{h,g} , \mathrm {St}) \\
&=(-1)^{n+[(n+1)/2]}2^{k-1}C_{h,g}{\dfrac{ | c_{{\widetilde {\mathcal F}}_{h,g}}(A)|^2}{|c_h(1)|^2}}L_{alg}(k+n,k;f) \notag \\
\times  &  { \dfrac{ L_{alg}(l,g ,\mathrm {St}) \prod_{i=1}^{n} L_{alg}(l+k+n-2i+1,l+k+n-2i;f)}
{ L_{alg}(2k+2n, g \otimes g \otimes f)\prod_{i=1}^{n-1} L_{alg}(2i+1,f,\mathrm {St}) \prod_{i=1}^n \widetilde \xi(2i)}}. \notag
\end{align}
\end{thm}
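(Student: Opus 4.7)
The plan is to derive Theorem \ref{thm.standard-L-of-IM-lift2} from Theorem \ref{thm.standard-L-of-IM-lift1} by eliminating the transcendental period $\Omega_{-}(f)$ via the Kohnen--Zagier formula. First I would observe that, under our hypothesis ($n$ odd and $k\equiv n+1\pmod 2$), $k$ is even and $k+n$ is odd, so the definition of $L_{alg}(k+n,f)$ in \S 2 gives $L_{alg}(k+n,f)\,\Omega_{-}(f)=\Lambda(k+n,f)$. Hence the only factor in Theorem \ref{thm.standard-L-of-IM-lift1} that still depends on the period is $\Lambda(k+n,f)/\langle h,h\rangle$, and the task reduces to rewriting this ratio in terms of $|c_h(|D|)|^2$ and a product of critical values of $L(\cdot,f)$.

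The key input is the Kohnen--Zagier formula for $h\in S_{k+1/2}^{+}(\varGamma_0(4))$ corresponding to $f$ under the Shimura correspondence, which in the $\tfrac16$-normalization of \S 3 reads
\[
\frac{|c_h(|D|)|^2}{\langle h,h\rangle}
= 2^{k-1}\,|D|^{k-1/2}\,\frac{\Lambda(k,f,\chi_D)}{\langle f,f\rangle}
\]
for every fundamental discriminant $D$ with $(-1)^{k}D>0$ (i.e.\ $D>0$, since $k$ is even). Combining this with the definition
\[
L_{alg}(k+n,k;f;1,\chi_D)
=\frac{\Lambda(k+n,f)\,\Lambda(k,f,\chi_D)}{(\sqrt{-1})^{2k+n+1}\,\langle f,f\rangle\,\tau(\chi_D)}
\]
from (\ref{V1}) and solving for $\Lambda(k+n,f)/\langle h,h\rangle$ yields, after using $\tau(\chi_D)=|D|^{1/2}$ (valid since $D>0$) and $(\sqrt{-1})^{2k+n+1}=(-1)^{(n+1)/2}$ (valid since $k$ is even and $n+1$ is even), the identity
\[
\frac{\Lambda(k+n,f)}{\langle h,h\rangle}
= (-1)^{(n+1)/2}\,2^{k-1}\,|D|^{k}\,\frac{L_{alg}(k+n,k;f;1,\chi_D)}{|c_h(|D|)|^2}.
\]

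Substituting this back into Theorem \ref{thm.standard-L-of-IM-lift1} and using $(-1)^{n}(-1)^{(n+1)/2}=(-1)^{n+[(n+1)/2]}$ (valid since $[(n+1)/2]=(n+1)/2$ for $n$ odd) gives the main formula. The second assertion (\ref{PIM}) follows by specializing to $D=1$, which is a fundamental discriminant with $\chi_D$ trivial, so that $|D|^{k}=1$ and $L_{alg}(k+n,k;f;1,\chi_D)=L_{alg}(k+n,k;f)$; the hypothesis $c_h(1)\ne 0$ is exactly what is needed to divide by $|c_h(|D|)|^2$ in this case.

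No new analytic ingredient is required, and the argument is essentially bookkeeping once Kohnen--Zagier is invoked. The only genuine care-point is to verify that the constant $2^{k-1}$ in our statement of Kohnen--Zagier is correct for the precise normalization of $\langle h,h\rangle$ adopted in \S 3 (with the factor $1/6$), and to respect the Gauss-sum convention $\tau(\chi_D)=\sqrt{D}$ for $D>0$; both are standard and pose no real obstacle.
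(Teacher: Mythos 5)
Your proposal is correct and takes essentially the same route as the paper: both proofs combine the Kohnen--Zagier formula with the definition (\ref{V1}) of $L_{alg}(k+n,k;f;1,\chi_D)$ to eliminate $\Omega_{-}(f)L_{alg}(k+n,f)/\langle h,h\rangle=\Lambda(k+n,f)/\langle h,h\rangle$ from Theorem \ref{thm.standard-L-of-IM-lift1}, then specialize to $D=1$ to get (\ref{PIM}). Incidentally, your exponent $|D|^{k-1/2}$ in Kohnen--Zagier is the one that actually matches the factor $2^{k-1}|D|^{k}$ in the statement; the paper's printed version $2^{k-1}|D|^{k-1}\Gamma_{\mathbb C}(k)L(k,f,\chi_D)/(|D|^{1/2}\langle f,f\rangle)$ carries an off-by-$|D|$ misprint, so your bookkeeping is the cleaner of the two.
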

\begin{proof}
By \cite{K-Z}, we have 
\[\dfrac{|c_h(|D|)|^2}{\langle h, \ h \rangle}=\frac{2^{k-1} |D|^{k-1}\Gamma_{\mathbb C}(k)L(k,f,\chi_D)}{|D|^{1/2}\langle f, \ f \rangle}.\]
Moreover we have
\begin{align*}
&L_{alg}(k+n,k;f,1;\chi_D)=\frac{\Gamma_{\mathbb C}(k+n)\Gamma_{\mathbb C}(k)L(k+n,f)L(f,k,\chi_D)}{\sqrt{-1}^{2k+n+1}s(D)|D|^{1/2}\langle f, \ f \rangle},
\end{align*}
where $s(D)=\begin{cases} 1 & \text{ if } k \text{ is even} \\
\sqrt{-1} & \text{ if } k \text{ is odd}. \end{cases}$.
Thus the assertion follows from Theorem \ref{thm.standard-L-of-IM-lift1}.
\end{proof}

\begin{thm} \label{thm.algebraicity-of-period1} 
Let the notation be as in Theorem \ref{thm.standard-L-of-IM-lift2}. Then,
$C_{h,g}$ belongs to $\mathbb Q(f) \mathbb Q(g)$.
\end{thm}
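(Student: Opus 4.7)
The plan is to solve the first identity in Theorem~\ref{thm.standard-L-of-IM-lift2} explicitly for $C_{h,g}$ and then to verify that every factor on the resulting right-hand side lies in $\mathbb{Q}(f)\mathbb{Q}(g)$.

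First I would pick a positive fundamental discriminant $D$ with $c_h(|D|)\neq 0$ (such $D$ exists since $h\neq 0$, and positivity is forced by the parity condition $\chi_D(-1)=(-1)^{n+1}=1$ for odd $n$) together with a positive definite half-integral $A$ of degree $2n+1$ such that $c_{\widetilde{\mathcal F}_{h,g}}(A)\neq 0$. Cancelling the nonzero factor $|c_{\widetilde{\mathcal F}_{h,g}}(A)|^2$ from both sides of the first identity of Theorem~\ref{thm.standard-L-of-IM-lift2} and rearranging expresses $C_{h,g}$ as $(-1)^{n+[(n+1)/2]}\,2^{-(k-1)}|D|^{-k}$ times the ratio of $|c_h(|D|)|^2\,L_{alg}(l,\widetilde{\mathcal F}_{h,g},\mathrm{St})\,L_{alg}(2k+2n,g\otimes g\otimes f)\prod_{i=1}^{n-1}L_{alg}(2i+1,f,\mathrm{St})\prod_{i=1}^{n}\widetilde\xi(2i)$ to $L_{alg}(k+n,k;f;1,\chi_D)\,L_{alg}(l,g,\mathrm{St})\prod_{i=1}^{n}L_{alg}(l+k+n-2i+1,l+k+n-2i;f)$.

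Next I would invoke Section~2 to show each factor is in $\mathbb{Q}(f)\mathbb{Q}(g)$. The quantity $|c_h(|D|)|^2$ lies in $\mathbb{Q}(f)$ by the normalization of $h$. Because $\chi_D$ is a real character, $\mathbb{Q}(\chi_D)=\mathbb{Q}$, and both $L_{alg}(k+n,k;f;1,\chi_D)$ and each $L_{alg}(l+k+n-2i+1,l+k+n-2i;f)$ belong to $\mathbb{Q}(f)$ by the algebraicity stated after~(\ref{V1}); in the latter case the consecutive integers $l+k+n-2i+1$ and $l+k+n-2i$ have opposite parities so that the parity condition is automatically met for trivial characters, and the bound $l\le k-n$ keeps every argument inside the critical range $[1,2k-1]$ of $f$. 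Section~2 gives $L_{alg}(2i+1,f,\mathrm{St})\in\mathbb{Q}(f)$ and $L_{alg}(l,g,\mathrm{St})\in\mathbb{Q}(g)$ for the degree-one standard $L$-values, and the balanced triple-product algebraicity applied with $k_1=2k$, $k_2=k_3=k+n+1$ and critical integer $l=2k+2n=k_2+k_3-2$ yields $L_{alg}(2k+2n,g\otimes g\otimes f)\in\mathbb{Q}(f)\mathbb{Q}(g)$. Finally $\widetilde\xi(2i)\in\mathbb{Q}$ since $\zeta(2i)\in\pi^{2i}\mathbb{Q}^{\times}$.

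The most delicate factor is $L_{alg}(l,\widetilde{\mathcal F}_{h,g},\mathrm{St})$. From~(\ref{I-M}) the Satake parameters of $\widetilde{\mathcal F}_{h,g}$ are rational combinations of those of $f$ and $g$, so $\mathbb{Q}(\widetilde{\mathcal F}_{h,g})\subseteq\mathbb{Q}(f)\mathbb{Q}(g)$, and the normalization of $h$ places every Fourier coefficient of $\widetilde{\mathcal F}_{h,g}$ in $\mathbb{Q}(f)\mathbb{Q}(g)$. Since $n$ is odd, $2n+1\equiv 3\pmod 4$, hence $\rho(2n+1)=1$, and the Section~2 algebraicity of the standard $L$-value applies to every odd $m$ in $[1,\,(k+n+1)-(2n+1)]=[1,\,k-n]$, which is exactly the allowed range of $l$. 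Multiplying all the factors then yields $C_{h,g}\in\mathbb{Q}(f)\mathbb{Q}(g)$. The main obstacle is that the algebraicity theorem of Section~2 requires the Fourier coefficients of $F$ to lie in $\mathbb{Q}(F)$ itself, whereas ours are a priori only in the possibly larger totally real field $\mathbb{Q}(f)\mathbb{Q}(g)$; I would close the gap by rescaling $\widetilde{\mathcal F}_{h,g}$ by a real element $\lambda\in\mathbb{Q}(f)\mathbb{Q}(g)$, which brings the Fourier coefficients into $\mathbb{Q}(\widetilde{\mathcal F}_{h,g})$ at the cost of a compensating factor $|\lambda|^2=\lambda^2\in\mathbb{Q}(f)\mathbb{Q}(g)$.
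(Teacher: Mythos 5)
Your overall strategy is exactly the paper's: pick $A$, $D$ and an odd $l$ with $c_{\widetilde{\mathcal F}_{h,g}}(A)\neq 0$ and $c_h(|D|)\neq 0$, solve the identity of Theorem \ref{thm.standard-L-of-IM-lift2} for $C_{h,g}$, and check that every factor lies in $\mathbb Q(f)\mathbb Q(g)$; your treatment of the individual algebraicity statements (the parity checks for (\ref{V1}), the balanced triple-product range, $\rho(2n+1)=1$, and the rescaling fix for the Fourier-coefficient hypothesis in the standard-$L$ algebraicity) is in fact more detailed than what the paper records. But there is a genuine gap at the central step: ``rearranging'' the identity so that $C_{h,g}$ equals a ratio is legitimate only if the quantities you divide by are non-zero, and you never establish this. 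Concretely, you must show
\[
L_{alg}(k+n,k;f;1,\chi_D)\, L_{alg}(l,g,\mathrm{St})\,\prod_{i=1}^{n} L_{alg}(l+k+n-2i+1,\,l+k+n-2i;f)\neq 0,
\]
since otherwise the identity of Theorem \ref{thm.standard-L-of-IM-lift2} merely reads $0=0$ and carries no information about $C_{h,g}$. This is not a formality, because with your range $1\le l\le k-n$ the non-vanishing can genuinely fail: for small $l$ the arguments $l+k+n-2i$ drop down to $l+k-n\le k$, i.e.\ at or below the center of the critical strip of $L(s,f)$; for instance with $n=1$, $l=1$ the product contains the central value $L(k,f)$, whose non-vanishing is neither known in general nor ensured by any hypothesis of the theorem.

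This is precisely why the paper restricts to $n+1\le l\le k-n$: then every argument $l+k+n-2i\ge k+1$ lies in the region of absolute convergence of the Euler product of $L(s,f)$, so those factors are non-zero; similarly $L_{alg}(l,g,\mathrm{St})\neq 0$ for such (odd) $l$ by absolute convergence of the standard Euler product at real $s>1$; and $L(k,f,\chi_D)\neq 0$ follows from your choice $c_h(|D|)\neq 0$ via the Kohnen--Zagier formula quoted in the proof of Theorem \ref{thm.standard-L-of-IM-lift2} --- this last point you should state explicitly, since it is exactly what makes the choice of $D$ do double duty (algebraicity \emph{and} non-vanishing). With the range of $l$ tightened and these three non-vanishing observations added, your argument coincides with the paper's proof.
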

\begin{proof}
We can take a positive definite half-integral  matrix  $A$ of degree $2n+1$, a fundamental discriminant $D$ and  an odd 
integer~$l$ with  $n+1 \le l \le k-n$
 such that $c_{\widetilde {\mathcal F}_{h,g}}(A) \not=0$ and $c_h(|D|) \not=0$. Then, by construction, $\dfrac{|c_{\widetilde {\mathcal F}_{h,g}}(A)|^2} {| c_h(|D|)|^2}$ is non-zero and belongs to $\mathbb Q(f)\mathbb Q(g)$.
Moreover, for any odd  integer $3 \le l \le k-n-1$, the value  $|c_{\widetilde {\mathcal F}_{h,g}}(A)|^2 L_{alg}(l, \widetilde {\mathcal F}_{h,g} , \mathrm {St})$ 
belongs to ${\mathbb Q}(f){\mathbb Q}(g)$. We also note that the value
$L_{alg}(k+n,k;f;1,\chi_D) L_{alg}(l,g ,\mathrm {St}) \prod_{i=1}^{n} L_{alg}(l+k+n-2i+1,l+k+n-2i;f)$ is non-zero and belongs to ${\mathbb Q}(f){\mathbb Q}(g)$.
This proves the assertion.
\end{proof}
\begin{rem} \label{rem.algebraicity-Ikeda-Miyawaki}
As stated before, $L_{alg}(1,g,\mathrm{St})$ is a rational number and $\mathbb Q(\widetilde {\mathcal F}_{h,g})=\mathbb Q(f)\mathbb Q(g)$. Therefore, by Theorems \ref{thm.standard-L-of-IM-lift2} and \ref{thm.algebraicity-of-period1}, $L_{alg}(1,\widetilde {\mathcal F}_{h,g},\mathrm{St})$ is algebraic and belongs to $\mathbb Q(\widetilde {\mathcal F}_{h,g})$.
\end{rem}

We note that
$L_{alg}(k+n,\mathrm {St}(g)\otimes f)$ belongs to $\mathbb Q(f)\mathbb Q(g)$.
Therefore we deduce the following algebraicity for the ratio of the periods.

\begin{thm} \label{thm.algebraicity-of-period2}
The ratio
$\dfrac{\langle  \widetilde {\mathcal F}_{h,g}, \  \widetilde {\mathcal F}_{h,g} \rangle}{\langle f, \ f \rangle^{n-1} \langle h, \ h \rangle \Omega_{-}(f)} $ belongs to $\mathbb Q(f)\mathbb Q(g)$,
and so does
$\dfrac{\langle   {\mathcal F}_{h,g}, \   {\mathcal F}_{h,g} \rangle}{\langle g, \ g \rangle^2 \langle f, \ f \rangle^{n-1} \langle h, \ h \rangle \Omega_{-}(f)}$.
\end{thm}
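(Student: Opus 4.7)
The approach is to deduce Theorem~\ref{thm.algebraicity-of-period2} from Theorem~\ref{thm.algebraicity-of-period1}, together with the algebraicity of the critical $L$-values collected in Section~2. Since $\widetilde{\mathcal F}_{h,g} = \langle g, g\rangle^{-1}{\mathcal F}_{h,g}$ yields $\langle \widetilde{\mathcal F}_{h,g}, \widetilde{\mathcal F}_{h,g}\rangle = \langle g, g\rangle^{-2}\langle {\mathcal F}_{h,g}, {\mathcal F}_{h,g}\rangle$, the two assertions in the theorem are equivalent, and I focus on the first.

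First, I would solve the defining relation for $C_{h,g}$ to isolate the Petersson norm of the Ikeda-Miyawaki lift:
\[
\langle {\mathcal F}_{h,g}, {\mathcal F}_{h,g}\rangle = \frac{\langle g, g\rangle \langle h, h\rangle}{C_{h,g}}\, \Lambda(k+n, \mathrm{St}(g)\otimes f) \prod_{i=1}^{n}\widetilde\xi(2i) \prod_{i=1}^{n-1} \widetilde\Lambda(2i+1, f, \mathrm{St}).
\]
By Theorem~\ref{thm.algebraicity-of-period1}, $C_{h,g}\in\mathbb Q(f)\mathbb Q(g)$ and it is non-zero (the algebraic $L$-values entering the proof of that theorem are non-zero). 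I would then replace each transcendental $L$-value on the right by its $L_{alg}$-counterpart times the appropriate period. Since $k+n$ is odd (from $k\equiv n+1\pmod 2$), the definition of $L_{alg}(\cdot,\mathrm{St}(g)\otimes f)$ gives $\Lambda(k+n, \mathrm{St}(g)\otimes f) = \langle g, g\rangle^{2}\Omega_{+}(f)\,L_{alg}(k+n, \mathrm{St}(g)\otimes f)$, with $L_{alg}(k+n,\mathrm{St}(g)\otimes f)\in\mathbb Q(f)\mathbb Q(g)$ by the remark immediately preceding the theorem. Similarly, for each $1\le i\le n-1$, $\widetilde\Lambda(2i+1, f, \mathrm{St}) = \langle f, f\rangle\,L_{alg}(2i+1, f, \mathrm{St})$ with $L_{alg}(2i+1, f, \mathrm{St})\in\mathbb Q(f)$ by the results of B\"ocherer and Mizumoto recalled in Section~2 (applicable since $1\le 2i+1\le 2k-1$ is odd); and $\widetilde\xi(2i)=\Gamma_{\mathbb C}(2i)\zeta(2i)\in\mathbb Q$ by Euler's evaluation of even zeta values.

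After these substitutions, the right side is the product of a quantity in $\mathbb Q(f)\mathbb Q(g)$ with a monomial in $\langle f, f\rangle$, $\langle g, g\rangle$, $\langle h, h\rangle$, and $\Omega_{\pm}(f)$. To reach the precise denominator stated in the theorem, I would invoke the auxiliary identity $\Omega_{+}(f)\Omega_{-}(f)/\langle f, f\rangle\in\mathbb Q(f)^{\times}$, which is obtained by reading~(\ref{V2}) as an equation for this ratio (with $f_1=f$, $f_2=g$, $l=k+n$) and observing that the left side is independent of $g$ while the right lies in $\mathbb Q(f)\mathbb Q(g)$. This identity lets me trade $\Omega_{+}(f)$ for $\langle f, f\rangle/\Omega_{-}(f)$ up to a factor in $\mathbb Q(f)^{\times}$, and collecting terms yields the denominator $\langle g, g\rangle^{2}\langle f, f\rangle^{n-1}\langle h, h\rangle\,\Omega_{-}(f)$ prescribed by the theorem. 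I do not anticipate a substantive obstacle; the proof is essentially a careful bookkeeping of Petersson norms and the two periods $\Omega_{\pm}(f)$, with the parity of $k+n$ controlling which period appears at each stage.
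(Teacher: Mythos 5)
Your overall strategy --- solve the defining relation of $C_{h,g}$ for the Petersson norm, replace each $L$-value by its algebraic part times the appropriate period, and then invoke Theorem \ref{thm.algebraicity-of-period1} --- is exactly the deduction the paper intends (the paper records little more than this). Your first substitutions are also correct: since $k+n$ is odd one has $\Lambda(k+n,\mathrm{St}(g)\otimes f)=\langle g,g\rangle^{2}\,\Omega_{+}(f)\,L_{alg}(k+n,\mathrm{St}(g)\otimes f)$, while $\widetilde\Lambda(2i+1,f,\mathrm{St})=\langle f,f\rangle\,L_{alg}(2i+1,f,\mathrm{St})$ and $\widetilde\xi(2i)\in\mathbb Q^{\times}$. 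Carrying these into the definition of $C_{h,g}$ gives
$$\dfrac{\langle \mathcal F_{h,g},\mathcal F_{h,g}\rangle}{\langle g,g\rangle^{3}\langle h,h\rangle\langle f,f\rangle^{n-1}\Omega_{+}(f)}\in\mathbb Q(f)\mathbb Q(g).$$
The gap is in your final step. Trading $\Omega_{+}(f)$ for $\langle f,f\rangle/\Omega_{-}(f)$ (up to $\mathbb Q(f)^{\times}$) in this display does \emph{not} collect into the denominator stated in the theorem: it yields
$$\dfrac{\langle \mathcal F_{h,g},\mathcal F_{h,g}\rangle\,\Omega_{-}(f)}{\langle g,g\rangle^{3}\langle h,h\rangle\langle f,f\rangle^{n}}\in\mathbb Q(f)\mathbb Q(g),$$
with $\Omega_{-}(f)$ on the wrong side and with $\langle g,g\rangle^{3}$ rather than $\langle g,g\rangle^{2}$. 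The ratio of this quantity to the one asserted in the theorem is $\Omega_{-}(f)^{2}/\bigl(\langle g,g\rangle\langle f,f\rangle\bigr)$, so your route would additionally require $\langle g,g\rangle\,\Omega_{+}(f)/\Omega_{-}(f)$ to lie in $\mathbb Q(f)\mathbb Q(g)$; neither the identity $\Omega_{+}(f)\Omega_{-}(f)\sim\langle f,f\rangle$ nor anything else in your proposal supplies such a relation between the Petersson norm of $g$ and the period ratio of $f$.

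The ingredient you never use is a relation tying $\langle h,h\rangle$ to the periods of $f$: the Kohnen--Zagier formula \cite{K-Z}, invoked by the paper in the proof of Theorem \ref{thm.standard-L-of-IM-lift2}, which combined with (\ref{Per}) gives $\langle h,h\rangle\,\Omega_{+}(f)/\langle f,f\rangle\in\overline{\mathbb Q}$, i.e. $\langle h,h\rangle\sim\Omega_{-}(f)$ up to algebraic factors. This is the only mechanism in the paper by which $\langle h,h\rangle$ and $\Omega_{-}(f)$ ever get coupled (it is visible in the factor $\Omega_{-}(f)/\langle h,h\rangle$ in Theorem \ref{thm.standard-L-of-IM-lift1}); in your argument $\langle h,h\rangle$ is carried along as an inert symbol, so no amount of period juggling on the $f$-side can place it correctly. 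Note, moreover, that even after inserting Kohnen--Zagier, honest bookkeeping from the first display above gives $\langle \mathcal F_{h,g},\mathcal F_{h,g}\rangle\sim\langle g,g\rangle^{3}\langle f,f\rangle^{n}$ up to $\mathbb Q(f)\mathbb Q(g)$, which still differs from the theorem's normalization by the same factor $\langle g,g\rangle\,\Omega_{+}(f)/\Omega_{-}(f)$. So the step you dismiss as ``careful bookkeeping'' is precisely where the substance lies, and as written your proof does not close; closing it requires either the Kohnen--Zagier input together with a reconciliation of the exponents, or a different normalization of the statement, but not the manipulation you describe.
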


\section {Pullback of Siegel Eisenstein series}
To prove our main result, we give an algorithm for computing the standard $L$-value for a cuspidal Hecke eigenform for $\Sp_3(\mathbb Z)$ (cf. Theorem \ref{thm.special-pullback}).  This type of algorithm was given in the case $F$ is a cuspidal Hecke eigenform for $\Sp_m(\mathbb Z)$ with $m \le 2$ (cf. \cite{D-I-K}, \cite{I-K}, \cite{Kat2}, \cite{Kat4}).
To give such an algorithm, first we express a certain modular form arising from the pullback of Siegel Eisenstein series  as a linear combination of Hecke eigenforms with their standard $L$-values as coefficients (cf. Theorem \ref{th.pullback-Eisenstein}).
We have carried it out in \cite{Kat22}, and here 
we give its refinement.  
For a non-negative integer $m$, put
\[\Gamma_m(s)=\pi^{m(m-1)/4}\prod_{i=1}^m \Gamma(s-{i-1 \over 2}).\]

 Let $n,l$ be positive integers such that $l$ is even.  Define the Eisenstein series $E_{n,l}(Z,s)$  by
$$E_{n,l}(Z,s)=\bigl(\det \mathrm {Im}(Z)\bigr)^s \sum_{\gamma \in   \Sp_n(\mathbb Z)_{\infty} \backslash \Sp_n(\mathbb Z)}j(\gamma,Z)^{-l}|j(\gamma,Z)|^{-2s},$$
where $\Sp_n(\mathbb Z)_{\infty}=\Big\{\begin{pmatrix} A & B \\ 0_n & D\end{pmatrix} \in \Sp_n(\mathbb Z) \Big\}$.
 Then $E_{n,l}(Z,s)$ converges absolutely as a function of $s$ if the real part of $s$ is large enough.
It has a meromorphic continuation to the whole $s$-plane, and  it   belong to  
$M_l^{\infty}(\Sp_n(\mathbb Z))$. Moreover  it is holomorphic and finite at $s=0$, which will be denoted by $E_{n,l}(Z)$. 
In particular, if $E_{n,l}(Z)$ belongs to $M_l(\Sp_n(\mathbb Z)),$ we put
$$E_{n,l}^*(Z)=Z(n,l)E_{n,l}(Z),$$
where
\[Z(n,l)=\zeta(1-l)\prod_{i=1}^{[n/2]} \zeta(1+2i-2l).\]
From now on, for an element $A \in \mathcal H_n(\mathbb Z)_{\ge 0}$,  we denote by $c_{n.l}(A)$ the $A$-th Fourier coefficient $c_{E_{n,l}^*}(A)$ of $E_{n,l}^*(Z)$.
To determine the Fourier coefficient of $E_{n,l}^*(Z)$, we define a polynomial attached to local Siegel series.
For a prime number $p$, let $\mathbb {Q}_p$ be the field of $p$-adic numbers and $\mathbb Z_p$ the ring of $p$-adic integers. For an element $B \in \mathcal {H}_n(\mathbb {Z}_p)$,
we define the Siegel series $b_p(B,s)$  as 
\[b_p(B,s)=\sum_{R \in \mathrm{Sym}_n(\mathbb {Q}_p)/\mathrm{Sym}_n(\mathbb {Z}_p)} 
{\bf e}_p(\mathrm{tr}(BR))\nu_p(R)^{-s},\]
where ${\bf e}_p$ is the additive character of $\mathbb {Z}_p$ such that ${\bf e}_p(m)={\bf e}(m)$ for $m \in \mathbb {Z}[p^{-1}]$, and
$\nu_p(R)=[R\mathbb {Z}_p^n+\mathbb {Z}_p^n:\mathbb {Z}_p^n]$.
We define $\chi_p(a)$ for $a \in {\mathbb {Q}}^{\times}_p $ as follows:
 $$\chi_p(a):=  
 \left\{\begin{array}{cl} 
  +1            & \text { if } \ {\mathbb {Q}}_p(\sqrt {a})={\mathbb {Q}}_p ,\\
  -1            & \text { if }  \ {\mathbb {Q}}_p(\sqrt {a})/{\mathbb {Q}}_p \text { is quadratic  unramified},\\
  0             & \text { if }  \ {\mathbb {Q}}_p(\sqrt {a})/{\mathbb {Q}}_p \text { is quadratic  ramified}. 
\end{array}\right.$$
For an element  $B \in \mathcal {H}_n(\mathbb {Z}_p)^{\mathrm {nd}}$ with $n$ even,  we define $\xi_p(B)$ by 
$$\xi_p(B):=\chi_p((-1)^{n/2}\det B).$$ 
For a nondegenerate half-integral matrix $B$ of size $n$ over ${\mathbb {Z}}_p$, define a polynomial $\gamma_p(B,X)$ in $X$ by
$$\gamma_p(B,X):=
\left\{
\begin{array}{ll}
(1-X)\prod_{i=1}^{n/2}(1-p^{2i}X^2)(1-p^{n/2}\xi_p(B)X)^{-1} & \text { if }  n \text { is \ even}, \\
(1-X)\prod_{i=1}^{(n-1)/2}(1-p^{2i}X^2) & \text { if } \ n \text { is  odd}.
\end{array}
\right.$$
Then it is well known that  there exists a unique polynomial $F_p(B,X)$ in $X$ over   ${\mathbb {Z}}$  with constant term $1$ such that 
$$b_p(B,s) =\gamma_p(B,p^{-s})F_p(B,p^{-s})$$ (e.g. \cite{Kat99}). 
                             
For  $B \in \mathcal {H}_m(\mathbb {Z})_{>0}$ with $m$ even, 
 let ${\mathfrak d}_B$ be the discriminant of  ${\mathbb {Q}}(\sqrt{(-1)^{m/2}\det B})/{\mathbb {Q}}$,  and $\chi_B=({\frac{{\mathfrak d}_B}{*}})$  the Kronecker character corresponding to ${\mathbb {Q}}(\sqrt{(-1)^{m/2}\det B})/{\mathbb {Q}}$. We note that we have $\chi_B(p)=\xi_p(B)$ for any prime $p.$                                    
 We also note that 
\[(-1)^{m/2}\det (2B)=\mathfrak d_B \mathfrak {f}_B^2\]
with $\mathfrak {f}_B \in \mathbb {Z}_{>0}$.
For any $T \in \mathcal {H}_n(\mathbb {Z}_p)$ which is not-necessarily non-degenerate, we define a polynomial $F_p^*(T,X)$  as follows:
For an element $T \in \mathcal {H}_n(\mathbb {Z}_p)$ of rank $r \ge 1,$ there exists an element 
$\widetilde T \in \mathcal {H}_r(\mathbb {Z}_p)^{\mathrm {nd}}$ such that $T \sim_{\mathbb {Z}_p} \widetilde T \bot O_{n-r}.$ We note that  
$F_p(\widetilde T,X)$ depends only on $T$ and  does not depend on the choice of $\widetilde T.$ Then we put $F_p^\ast(T,X)=F_p(\widetilde T,X).$  
For an element $T \in \mathcal {H}_n(\mathbb {Z})_{\ge 0}$ of rank $r \ge 1,$ there exists an element $\widetilde T \in 
\mathcal {H}_r(\mathbb {Z})_{>0}$ such that $T \sim_{\mathbb {Z}} \widetilde T \bot O_{n-r}.$
If $r$ is even, $\chi_{\widetilde T}$ depend only on $T$ and does not depend on the choice of $\widetilde T$. Then we write $\chi_T^{\ast}=\chi_{\widetilde T}$. 
The following proposition is due to \cite[Proposition 6.3]{A-C-I-K-Y23}.
\begin{prop} \label{prop.FC-Siegel-Eisenstein}
Let $n$ and $l$ be positive integers such that $l \ge n+1$.
 Then $E_{2n,l}^*(Z)$ is holomorphic and belongs to $M_l(\Sp_{2n}(\mathbb Z))$ except in the following case:
$l=n+1 \equiv 2 \text{ mod } 4$. 
In the case that $E_{2n,l}^*(Z)$ is holomorphic we have the following assertion:
For $B \in \mathcal {H}_{2n}(\mathbb {Z})_{\ge 0}$ of rank $m,$ we have
\begin{align*}
c_{2n,l}(B)
&=2^{[(m+1)/2]}\prod_{p \mid  \det (2\widetilde B)} F_p^\ast(B,p^{l-m-1})\\
&\times 
\begin{cases}
\prod_{i=m/2+1}^{n} \zeta(1+2i-2l) L(1+m/2-l,\chi_B^\ast)
 & \text { if }  m \text { is  even  }, \\
\prod_{i=(m+1)/2}^{n} \zeta(1+2i-2l) & \text { if } \ m \text { is  odd}.
\end{cases}
\nonumber
\end{align*}
Here we make the convention that $F_p^*(B,p^{l-m-1})=1$ and $\L(1+m/2-l,\chi^*_B)=\zeta(1-l)$ if $m=0$.
\end{prop}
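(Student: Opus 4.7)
The plan is to derive the formula from the standard computation of Fourier coefficients of Siegel Eisenstein series in the style of Shimura and Feit, combined with the Kitaoka--Katsurada factorization of local Siegel series.

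First, I would write down the classical Fourier expansion of $E_{2n,l}(Z,s)$ in the region of absolute convergence. For $B \in \mathcal{H}_{2n}(\mathbb{Z})_{\ge 0}$ the $B$-th Fourier coefficient splits as an archimedean confluent hypergeometric factor (depending on $Y$, $s$, $l$) multiplied by a product of local factors $\prod_p b_p(B, s+l)$. Using the defining relation $b_p(\widetilde B, s) = \gamma_p(\widetilde B, p^{-s}) F_p(\widetilde B, p^{-s})$ on the nondegenerate core $\widetilde B$ and the definition of $F_p^{\ast}$, I would rewrite the non-archimedean part as
\[
\prod_p \gamma_p(\widetilde B, p^{-s-l}) \cdot \prod_p F_p^{\ast}(B, p^{-s-l}).
\]

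Second, I would take the $\gamma_p$ product globally. Using $\chi_B^{\ast}(p) = \xi_p(\widetilde B)$ when $m$ is even, the global product of $\gamma_p^{-1}$ telescopes to $L(s+l-m/2, \chi_B^{\ast})^{-1}$ together with a product of zeta values $\zeta(s+l-i)^{-1}$ ranging over a finite set of $i$; when $m$ is odd only zeta values occur. Specializing $s = 0$ and multiplying through by $Z(2n, l) = \zeta(1-l) \prod_{i=1}^n \zeta(1+2i-2l)$, the zeta values with matching exponent cancel and exactly the $\zeta(1+2i-2l)$ with $i$ in the range $m/2 + 1 \le i \le n$ (or $(m+1)/2 \le i \le n$) survive, together with the $L(1+m/2-l, \chi_B^{\ast})$ factor. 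The archimedean factor collapses, via the standard identity for the confluent hypergeometric function at $s = 0$ and the factorial identities for $\Gamma_{2n}$, to the constant $2^{[(m+1)/2]}$.

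For holomorphy at $s = 0$ and modularity of $E_{2n,l}^{\ast}(Z)$, I would invoke the analytic continuation of Langlands--Shimura for $E_{2n, l}(Z, s)$. In the range $l \ge n + 1$, the only possible pole at $s = 0$ comes from the rank-zero term (proportional to $\zeta(1 - l)$) or from a vanishing archimedean gamma factor; after multiplying by $Z(2n, l)$ these are cancelled, except exactly in the singular weight case $l = n + 1 \equiv 2 \bmod 4$, where $E_{2n, l}(Z, s)$ has a genuine pole that the normalization fails to remove. Modularity of the specialization is inherited from the modularity of $E_{2n, l}(Z, s)$ for each $s$.

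The main obstacle I anticipate is the degenerate case $m < 2n$. One must carefully identify how the coset sum defining $E_{2n, l}(Z, s)$ reduces the $B$-th Fourier coefficient to a local computation on the nondegenerate core $\widetilde B$, and verify that exactly the right subset of zeta values survives the cancellation with $Z(2n, l)$. Pinning down the correct power of $2$, the attached character $\chi_B^{\ast}$ (which depends on the parity of the rank $m$), and the rank-zero ``constant term'' (for which $\zeta(1 - l)$ replaces $L(1 - l, \chi_B^{\ast})$) is the delicate bookkeeping step of the argument.
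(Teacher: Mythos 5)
The paper itself contains no proof of this proposition: it is quoted as-is from \cite[Proposition 6.3]{A-C-I-K-Y23}, so the only question is whether your sketch would actually deliver the stated formula. It follows the right classical outline (Fourier coefficient $=$ archimedean factor $\times$ product of local Siegel series, factor out $\gamma_p$, normalize by $Z(2n,l)$), but as written it has a genuine gap that blocks the conclusion: your non-archimedean product is $\prod_p F_p^{\ast}(B,p^{-s-l})$, which at $s=0$ is $\prod_p F_p^{\ast}(B,p^{-l})$, whereas the proposition asserts $\prod_p F_p^{\ast}(B,p^{l-m-1})$. These are not the same polynomial values, and the discrepancy is exactly a power of $\det(2\widetilde B)$. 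For instance, for rank $m=1$ with core $(N)$ one has $F_p((N),X)=\sum_{j=0}^{\mathrm{ord}_p N}(pX)^j$, so $\prod_p F_p(B,p^{-l})=\sigma_{1-l}(N)$ while $\prod_p F_p(B,p^{l-2})=\sigma_{l-1}(N)=N^{l-1}\sigma_{1-l}(N)$. Correspondingly, your claim that the archimedean factor ``collapses to the constant $2^{[(m+1)/2]}$'' is false: at $s=0$ the confluent hypergeometric factor contributes $\det(2\widetilde B)^{\,l-(m+1)/2}$ times Gamma and $\pi$ factors, not a constant. Passing from $F_p^{\ast}(B,p^{-l})$ together with that determinant power to the self-dual form $F_p^{\ast}(B,p^{l-m-1})$ requires the functional equation of the Siegel-series polynomial under $X\mapsto p^{-m-1}X^{-1}$, which is a substantial theorem (\cite{Kat99}), not bookkeeping; your sketch contains no mechanism for it.

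Two further points. First, the $\gamma_p$-product yields $\zeta(s+l)^{-1}\prod_i\zeta(2s+2l-2i)^{-1}L(s+l-m/2,\chi_B^{\ast})$, i.e. values at \emph{positive} arguments; these do not ``cancel with matching exponents'' against the negative-argument values $\zeta(1-l)$, $\zeta(1+2i-2l)$ in $Z(2n,l)$ --- one must apply the functional equations of $\zeta$ and $L(\cdot,\chi_B^{\ast})$, whose Gamma and $\pi$ factors then enter the same archimedean bookkeeping as above. Second, for degenerate $B$ of rank $m<2n$ the full-size local series $b_p(B,\cdot)$ is \emph{not} $\gamma_p(\widetilde B,\cdot)F_p(\widetilde B,\cdot)$; extra factors from the null directions appear (already visible for $B=0$, where $b_p(0,s)=\zeta_p(s-1)/\zeta_p(s)$ in size one). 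The clean way to handle this, which your sketch only gestures at, is Siegel's $\Phi$-operator: $\Phi^{2n-m}E_{2n,l}=E_{m,l}$ up to normalization, and the ratio $Z(2n,l)/Z(m,l)$ is precisely what makes the factors $\zeta(1+2i-2l)$ with $i>m/2$ survive. Finally, in the excluded case $l=n+1\equiv 2 \bmod 4$ the obstruction is not a pole at $s=0$ that the normalization fails to remove: the value there is finite but fails to be holomorphic in $Z$ (Weissauer/Shimura), which is why the proposition excludes it from the holomorphy statement only.
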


Let $\stackrel {\!\!\!\!\!\! \circ} {{\mathcal D}_{m,l}^{\nu}}$ be the differential operator in \cite{B-S}, which maps 
$M_{l}(\Sp_{2m}(\mathbb  Z))$ to $M_{l+\nu}(\Sp_m(\mathbb  Z)) \otimes M_{l+\nu}(\Sp_m(\mathbb Z)).$  
 For a non-negative integer $\nu \le k$, we define a function 
${\mathfrak  E}_{2n}^{k,\nu}(Z_1,Z_2)$ on ${\mathbb H}_n \times {\mathbb H}_n$ as
\begin{align*}
&{\mathfrak  G}_{2n}^{k,\nu}(Z_1,Z_2) = (2\pi \sqrt{-1})^{-n\nu}\sqrt{-1}^{-k+\nu} \stackrel {\!\!\!\!\!\!\!\!\!\!\!\!\!\!\! \circ} {{\mathcal D}_{n,k-\nu}^{\nu}}E^*_{2n,k-\nu}
(Z_1,Z_2)
\end{align*}
for $(Z_1,Z_2) \in {\mathbb H}_n \times {\mathbb H}_n$. 
From now on, until Corollary \ref{cor.pullback-Eisenstein}, let $l$ be an even integer such that $l \ge n+1$ and  we assume that  $l$ does not satisfy $l=n+1 \equiv 2 \mod 4$. 
Let $T$ be a $2n \times 2n$ symmetric matrix of variables. Then, there exists a polynomial $P_{n,l}^{\nu}(X)$ in $T$ such that
\begin{align*} &\stackrel {\!\!\!\!\!\! \circ} {{\mathcal D}_{n,l}^{\nu}}\Bigl({\bf e}\bigl(\mathrm{tr}\Bigl(\begin{pmatrix} A_1 & R/2 \\ {}^tR/2 & A_2 \end{pmatrix} \begin{pmatrix} Z_1 & Z_{12} \\ {}^t Z_{12} & Z_2 \end{pmatrix}\Bigr)\Bigr)\Bigr)\\
&=(2\pi \sqrt{-1})^{n \nu}P_{n,l}^{\nu}\Bigl(\begin{pmatrix} A_1 & R/2 \\ {}^tR/2 & A_2 \end{pmatrix}\Bigr) 
{\bf e}(\mathrm{tr}(A_1Z_1+A_2Z_2)) 
\end{align*}
for $\begin{pmatrix} A_1 & R/2 \\ {}^tR/2 & A_2 \end{pmatrix} \in \mathcal {H}_{2n}(\mathbb {Z})_{\ge 0}$ with $A_1,A_2 \in \mathcal {H}_n(\mathbb {Z})_{\ge 0}$
and $ \begin{pmatrix} Z_1 & Z_{12} \\ {}^t Z_{12} & Z_2 \end{pmatrix} \in {\mathbb H}_{2n}$ with $Z_1,Z_2 \in {\mathbb H}_n$.
For $A_1,A_2 \in \mathcal H_n(\mathbb Z)_{\ge 0}$, put
\begin{align}\label{FC}
&c_{{\mathfrak  G}_{2n}^{k,k-l}}(A_1,A_2)\\
&=
\sum_{R \in M_n(\mathbb {Z})} P_{n,l}^{k-l}\Bigl(\begin{pmatrix} A_1 & R/2 \\ {}^tR/2 & A_2 \end{pmatrix}\Bigr)c_{2n,l}\Bigl(\begin{pmatrix} A_1 & R/2 \\ {}^tR/2 & A_2 \end{pmatrix}\Bigr). \notag
\end{align}
Then, by  definition, 
${\mathfrak  G}_{2n}^{k,k-l}(Z_1,Z_2)$ can be expressed as 
\begin{align*}
{\mathfrak  G}_{2n}^{k,k-l}(Z_1,Z_2) =\sum_{A_1,A_2 \in \mathcal {H}_n(\mathbb {Z})_{\ge 0}} c_{{\mathfrak  E}_{2n}^{k,k-l}}(A_1,A_2){\bf e}\Big(\mathrm{tr}(A_1Z_1+A_2Z_2)\Big).
\end{align*}
For $A \in \mathcal H_n(\mathbb Z)_{\ge 0}$, define a Fourier series ${\mathcal G}_{2n}^{k,k-l}(Z_1)$ on $\mathbb H_n$ as
\[{\mathcal G}_{2n}^{k,k-l}(Z_1,A)=\sum_{A_1 \in \mathcal H_n(\mathbb Z)_{\ge 0}} c_{{\mathfrak  E}_{2n}^{k,k-l}}(A_1,A){\bf e}(A_1Z_1).\]
Then, by definition, 
${\mathfrak  G}_{2n}^{k,k-l}(Z_1,Z_2)$ can be expressed as
$${\mathfrak  G}_{2n}^{k,k-l}(Z_1,Z_2)=\sum_{A \in {\mathcal H}_n(\mathbb {Z})_{\ge 0}}{\mathcal G}_{2n}^{k,k-l}(Z_1,A){\bf e}(\mathrm {tr}(AZ_2)).$$
The Fourier series ${\mathcal G}_{2n}^{k,k-l}(Z_1,A)$  belongs to $M_k(\Sp_n(\mathbb Z))$, and in particular, if $l<k$, then ${\mathcal G}_{2n}^{k,k-l}(Z_1,A)$ belongs to 
$S_k(\Sp_n(\mathbb Z))$ (cf. \cite[Proposition 3,6]{Kat22}.
Define $\Lambda(m,F,\mathrm{St})$ by
\[\Lambda(m,F, \mathrm{St})=\frac{
\Gamma(m)\prod_{i=1}^n \Gamma(2k-n-i)L(m,F, \mathrm {St})} { \pi^{-n(n+1)/2+nk+(n+1)m}}.\]
The following lemma is a modification of  \cite[Theorem 3.1]{B-S}.
\begin{lem} \label{lem.pullback}
Let $g$ be a Hecke eigenform in $S_k(\Sp_n(\mathbb Z))$. Then we have
\begin{align*}
\langle g, \mathfrak E_{2n}^{k,k-l}(*,-\overline{Z_2}) \rangle=c(n,k,l)\Lambda(l-n,g,\mathrm {St})g(Z_2),
\end{align*}
where   $c(n,k,l)=(-1)^{l(n-1)/2-n(n+1)/2}2^{-4kn+(n-1)l+2n^2+3n+2}$.
\end{lem}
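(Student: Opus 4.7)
The plan is to reduce the statement to the Böcherer--Satoh pullback formula \cite[Theorem 3.1]{B-S}, and then track normalization constants carefully. The starting point is the Rankin--Selberg type identity for the non-holomorphic Siegel Eisenstein series: using the orbit decomposition of $\Sp_{2n}(\mathbb Z)_{\infty} \backslash \Sp_{2n}(\mathbb Z) / (\Sp_n(\mathbb Z) \times \Sp_n(\mathbb Z))$, only the ``open'' double coset contributes to the Petersson pairing $\langle g, E_{2n,l}(*, -\overline{Z_2}, s) \rangle$. Unfolding this contribution produces an Euler product that, after multiplication by the usual $\zeta$-factors, is the standard $L$-function $L(s, g, \mathrm{St})$ evaluated at an appropriate point, times $g(Z_2)$ and explicit $\Gamma$-factors coming from the Archimedean computation.

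Once the Petersson pairing against the non-holomorphic Eisenstein series is expressed in terms of $L(s, g, \mathrm{St})$ times $g(Z_2)$, I would apply the holomorphic differential operator $\stackrel{\!\!\!\circ}{\mathcal D^{\nu}_{n,k-\nu}}$ with $\nu = k-l$. Since this operator acts in the $(Z_1,Z_2)$-variables and commutes with the Petersson inner product in $Z_1$, it lifts the weight on each factor from $l$ to $k$ while producing, at $s=0$, a holomorphic form. Evaluation at $s=0$ combined with the completion factor $Z(n,l)$ (which is packaged into the definition of $E^*_{2n,l}$) converts the product $\zeta(1-l)\prod \zeta(1+2i-2l) \cdot L(l-n, g, \mathrm{St})$ together with the accompanying $\Gamma$-factors into exactly the completed quantity $\Lambda(l-n, g, \mathrm{St})$ defined in the statement.

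The remaining work is to collect the numerical constants. Four sources contribute: (i) the Archimedean integral, which produces a power of $\pi$ and of $2$ that matches the denominator $\pi^{-n(n+1)/2 + nk + (n+1)(l-n)}$ in the definition of $\Lambda(l-n, g, \mathrm{St})$; (ii) the normalization $(2\pi\sqrt{-1})^{-n\nu}\sqrt{-1}^{-k+\nu}$ built into $\mathfrak G_{2n}^{k,k-l}$, which contributes the sign $(-1)^{l(n-1)/2-n(n+1)/2}$ after combining with the $\sqrt{-1}^{-k+\nu}$ factor and the Gamma-function signs from $\Lambda$; (iii) the explicit constant appearing in the Böcherer--Satoh differential operator; and (iv) the constant coming from $Z(n,l)$ and the switch from $L$ to $\Lambda$. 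All of these combine to give $c(n,k,l)$.

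The main obstacle is this constant-bookkeeping: the paper's normalizations of $E^*_{2n,l}$, of $\mathfrak G_{2n}^{k,k-l}$, and of $\Lambda(m,F,\mathrm{St})$ differ from those in \cite{B-S}, so one must carefully translate each factor of $2$, $\pi$, and $\sqrt{-1}$ between the two conventions. The structural content, namely the unfolding and the Böcherer--Satoh action of the differential operator, is standard; the substantive step is pinning down the exact exponent $-4kn + (n-1)l + 2n^2 + 3n + 2$ and the sign. Once this is done, the lemma is the precise reformulation of \cite[Theorem 3.1]{B-S} adapted to the present normalizations and is ready to be used to compute the standard $L$-value for Hecke eigenforms on $\Sp_3(\mathbb Z)$ in the following sections.
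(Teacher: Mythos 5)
Your overall framework is the same as the paper's: the paper also takes \cite[Theorem 3.1]{B-S} as a black box and reduces the lemma to a comparison of normalizations between that theorem and the definitions of $E^*_{2n,l}$, of ${\mathfrak E}_{2n}^{k,k-l}$, and of $\Lambda(l-n,g,\mathrm{St})$. In particular your unfolding/doubling sketch of how the pullback formula itself is proved is dispensable (and, incidentally, the reference is B\"ocherer--Schmidt, not B\"ocherer--Satoh).

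There is, however, a genuine gap: the entire mathematical content of this lemma is the exact value of $c(n,k,l)$, and your proposal never computes it. You list the four sources of constants and then assert that they ``combine to give $c(n,k,l)$,'' which is exactly the statement to be proved; you yourself flag the exponent $-4kn+(n-1)l+2n^2+3n+2$ and the sign as ``the substantive step'' and then leave it undone. The paper's proof consists precisely of that computation, and it requires two classical identities that appear nowhere in your plan. First, the functional equation of the Riemann zeta function: the formula of \cite{B-S} carries the factor $\widetilde Z(2n,l)=\zeta(l)\prod_{i=1}^{n}\zeta(2l-2i)$ in its denominator, while $E^*_{2n,l}$ carries $Z(2n,l)=\zeta(1-l)\prod_{i=1}^{n}\zeta(1+2i-2l)$; these zeta values sit at reflected arguments, so the ratio $Z(2n,l)/\widetilde Z(2n,l)$ is not a formal cancellation but must be evaluated, yielding a sign, explicit powers of $2$ and $\pi$, and the Gamma factors $\Gamma(l)\prod_{i=1}^{n}\Gamma(2l-2i)$. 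Second, the Legendre duplication formula: the archimedean factor $\widetilde\Omega_{k,l}$ of \cite{B-S} is a ratio of products of Gamma functions at half-integral arguments, and one must rewrite its numerator as a product of $\Gamma(2k-n-i)$ (times powers of $2$ and $\pi^{1/2}$) and its denominator as a product of $\Gamma(2l-2i)$ (times similar factors), so that the $\Gamma(2l-2i)$ cancel against those produced by the functional equation and the $\Gamma(2k-n-i)$ assemble into the completed $\Lambda(l-n,g,\mathrm{St})$ of the paper. Without carrying out these two steps (or an equivalent calculation), neither the sign $(-1)^{l(n-1)/2-n(n+1)/2}$ nor the power of $2$ in $c(n,k,l)$ is established, and the lemma --- whose only purpose is to pin down this constant for the later numerical verification of Ikeda's conjecture --- remains unproved.
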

\begin{proof}
By \cite[Theorem 3.1]{B-S}, we have
\begin{align*}
&\langle g, \  \mathfrak E_{2n}^{k,k-l}(*,-\overline{Z_2}) \rangle\\
&=\langle g, \  (2\pi \sqrt{-1})^{n(l-k)} Z(2n,l)\stackrel {\!\!\!\!\!\! \circ} {{\mathcal D}_{n,l}^{k-l}}(E_{2n,l})(*,-\overline{Z_2}) \rangle \\
&=(2\pi)^{n(l-k)}(-1)^{n(l-k)/2} Z(2n,l) \\
&\times  (-1)^{kl/2}2^{1-nk+n(n+3)/2}\pi^{n(n+1)/2}\dfrac{\widetilde \Omega_{k,l}}{\widetilde Z(2n,l)} L(l-n,g,\mathrm{St})g(Z_2),\end{align*}
where
\[\widetilde Z(2n,l)=\zeta(l)\prod_{i=1}^n \zeta(2l-2i)\]
and
\[\widetilde \Omega_{k,l}=\dfrac{\prod_{i=1}^n \Gamma(k-n/2-(i-1)/2)\prod_{i=1}^n \Gamma(k-(n+1)/2-(i-1)/2)}
{\prod_{i=1}^n \Gamma(l-(i-1)/2)\prod_{i=1}^n \Gamma(l-n/2-(i-1)/2)}.\]
By the functional equation of  the Riemann zeta function, we can check that
\begin{align*}
\dfrac{Z(2n,l)}{\widetilde Z(2n,l)}&=(-1)^{-l/2-n(n+1)/2} \pi^{-l-2ln+n(n+1)}2^{-l-2ln+n(n+1)+n+1}\\
&\times \Gamma(l)\prod_{i=1}^n \Gamma(2l-2i).
\end{align*}
Moreover, by the duplication formula for the Gamma function, we show that 
\begin{align*}
&\prod_{i=1}^n \Gamma(k-n/2-(i-1)/2)\prod_{i=1}^n \Gamma(k-(n+1)/2-(i-1)/2)\\
&=\prod_{i=1}^n2^{-2k+n+i+1}\pi^{1/2}\Gamma(2k-n-i),\end{align*}
and
\begin{align*}
&\prod_{i=1}^n \Gamma(l-(i-1)/2)\prod_{i=1}^n \Gamma(l-n/2-(i-1)/2)\\
& \Gamma(l) \Gamma(l-n)^{-1}\prod_{i=1}^n (\Gamma(l-(2i-1)/2)\Gamma(l-2i/2)) \\
&=\Gamma(l)\Gamma(n-l)^{-1}\prod_{i=1}^n 2^{-2l+2i+1} \pi^{1/2}\Gamma(2l-2i).\end{align*} 
This proves the assertion.

\end{proof}

The following theorem is essentially a refinement of a special case of \cite[Theorem 3.7]{Kat22}.
\begin{thm} \label{th.pullback-Eisenstein}
Let $\{F_i \}_{i=1}^d$ be an orthogonal basis of $ S_k(\Sp_n(\mathbb Z))$ consisting of Hecke eigenforms, and $\{F_i \}_{d+1 \le i \le e}$ be a basis of the orthogonal complement $S_k(\Sp_n(\mathbb Z))^\perp$ of $S_k(\Sp_n(\mathbb Z))$ in $M_k(\Sp_n(\mathbb Z))$ with respect to the Petersson product.
Then, for any $A \in \mathcal H_n(\mathbb Z)_{>0}$ we have
\begin{align*}
{\mathcal G}_{2n}^{k,k-l}(Z,A) &= c(n,k,l)\sum_{i=1}^d\frac{\Lambda(l-n,F_i, \mathrm {St})}{\langle F_i, F_i \rangle} \overline{c_{F_i}(A)}F_i(Z)\\
&+ \sum_{i=d+1}^e c_{i,A} F_i(Z),
\end{align*}
where $c_{i,A}$ is a certain complex number. Moreover we have $c_{i,A}=0$ for any $d+1 \le i \le e$ if $l<k$.
\end{thm}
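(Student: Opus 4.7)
The plan is to expand ${\mathcal G}_{2n}^{k,k-l}(Z,A)$ in the given basis of $M_k(\Sp_n(\mathbb Z))$ and determine the cuspidal coefficients by pairing with each $F_i$ and invoking Lemma \ref{lem.pullback}. Since \cite[Proposition 3.6]{Kat22} already guarantees that ${\mathcal G}_{2n}^{k,k-l}(*,A)$ is a holomorphic modular form of weight $k$, and even a cusp form when $l<k$, one may write
\[{\mathcal G}_{2n}^{k,k-l}(Z,A)=\sum_{i=1}^{d}a_{i,A}F_i(Z)+\sum_{i=d+1}^{e}c_{i,A}F_i(Z).\]
When $l<k$ the second sum must vanish identically, because the $F_i$ with $i>d$ span a complement to $S_k(\Sp_n(\mathbb Z))$ with respect to the Petersson pairing, while ${\mathcal G}_{2n}^{k,k-l}(*,A)$ is cuspidal. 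This immediately takes care of the last claim of the theorem.

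To identify the $a_{i,A}$, I would start from the defining Fourier expansion
\[{\mathfrak G}_{2n}^{k,k-l}(Z_1,W)=\sum_{A\in\mathcal H_n(\mathbb Z)_{\ge 0}}{\mathcal G}_{2n}^{k,k-l}(Z_1,A)\,{\bf e}(\mathrm{tr}(AW))\]
and substitute $W=-\overline{Z_2}$, using ${\bf e}(\mathrm{tr}(AW))=\overline{{\bf e}(\mathrm{tr}(AZ_2))}$. Taking the Petersson inner product against $F_i$ in the $Z_1$-variable and pulling the $Z_1$-independent scalar out of the second slot of $\langle\cdot,\cdot\rangle$ (with the resulting conjugation reversing the sign of conjugation on the exponential) yields
\[\langle F_i,{\mathfrak G}_{2n}^{k,k-l}(*,-\overline{Z_2})\rangle_{Z_1}=\sum_{A}{\bf e}(\mathrm{tr}(AZ_2))\,\langle F_i,{\mathcal G}_{2n}^{k,k-l}(*,A)\rangle.\]
Lemma \ref{lem.pullback} identifies the left-hand side with $c(n,k,l)\Lambda(l-n,F_i,\mathrm{St})F_i(Z_2)$, whose Fourier expansion is $c(n,k,l)\Lambda(l-n,F_i,\mathrm{St})\sum_{A>0}c_{F_i}(A){\bf e}(\mathrm{tr}(AZ_2))$. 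Comparing $A$-th coefficients for $A>0$ then gives
\[\langle F_i,{\mathcal G}_{2n}^{k,k-l}(*,A)\rangle=c(n,k,l)\Lambda(l-n,F_i,\mathrm{St})c_{F_i}(A).\]

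From here, orthogonality of the Hecke eigenforms inside $S_k(\Sp_n(\mathbb Z))$ gives
\[a_{i,A}=\frac{\overline{\langle F_i,{\mathcal G}_{2n}^{k,k-l}(*,A)\rangle}}{\langle F_i,F_i\rangle}=c(n,k,l)\frac{\Lambda(l-n,F_i,\mathrm{St})}{\langle F_i,F_i\rangle}\overline{c_{F_i}(A)},\]
where I use that $c(n,k,l)$ is rational and that $\Lambda(l-n,F_i,\mathrm{St})$ is real (because the Hecke algebra for $\Sp_n(\mathbb Z)$ is self-adjoint, so the eigenvalues, and hence the Satake parameters and the Dirichlet coefficients of $L(s,F_i,\mathrm{St})$, are real). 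This is precisely the formula asserted in the theorem. The main obstacle is bookkeeping: one must track conjugations carefully through the substitution $W=-\overline{Z_2}$ and justify term-by-term identification of Fourier coefficients (locally uniform convergence of the Fourier series of ${\mathfrak G}_{2n}^{k,k-l}$ on $\mathbb H_n\times\mathbb H_n$ is enough), but the underlying mechanism is the reproducing-kernel principle already implicit in \cite[Theorem 3.7]{Kat22}.
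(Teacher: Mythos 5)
Your proof is correct and follows exactly the route the paper intends: the paper states this theorem without a written proof, having prepared Lemma \ref{lem.pullback} and the cuspidality statement from \cite[Proposition 3.6]{Kat22} precisely so that the result follows by expanding ${\mathcal G}_{2n}^{k,k-l}(\cdot,A)$ in the basis $\{F_i\}$, pairing against each cuspidal $F_i$, and comparing Fourier coefficients in $Z_2$, which is what you do. Your handling of the conjugations (using that $c(n,k,l)$ is rational and $\Lambda(l-n,F_i,\mathrm{St})$ is real by self-adjointness of the Hecke operators) and of the vanishing of the non-cuspidal components when $l<k$ is sound.
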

\begin{cor}\label{cor.pullback-Eisenstein}
Put 
$$ \widetilde {\mathcal G}_{2n}^{k,k-l}(Z,A) =\widetilde c(n,k,l){\prod_{i=1}^n  \Gamma(l-n+k-i) \over 
\prod_{i=1}^n \Gamma(2k-n-i) }
{\mathcal G}_{2n}^{k,k-l}(Z,A),$$
where $\widetilde c(n,k,l)=(-1)^{l(n-1)/2+n(n+1)/2}2^{3kn-2nl-n(n+1)/2-1}$.  
Then we have
\begin{align*}
\widetilde {\mathcal G}_{2n}^{k,k-l}(Z,A) &=\sum_{i=1}^d L_{alg}(l-n,F_i,  \mathrm {St}) \overline{c_{F_i}(A)}F_i(Z)\\
&+ \sum_{i=d+1}^e \widetilde c_i F_i(Z),
\end{align*}
where  $\widetilde c_{i,A}$ is a certain complex number. Moreover we have $\widetilde c_{i,A}=0$ for any $d+1 \le i \le e$ if $l<k$.
\end{cor}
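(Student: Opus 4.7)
The plan is to obtain Corollary \ref{cor.pullback-Eisenstein} as a direct rescaling of Theorem \ref{th.pullback-Eisenstein}. Concretely, I would multiply both sides of the identity there by the scalar
$$S := \widetilde c(n,k,l) \, \frac{\prod_{i=1}^n \Gamma(l-n+k-i)}{\prod_{i=1}^n \Gamma(2k-n-i)},$$
set $\widetilde c_{i,A} := S \cdot c_{i,A}$ for the non-cuspidal indices $d+1 \le i \le e$, and verify that the coefficient of each cuspidal eigenform $F_i$ on the right collapses to $\widetilde\Lambda(l-n,F_i,\mathrm{St}) \, \overline{c_{F_i}(A)} / \langle F_i,F_i\rangle$, which is by definition $L_{alg}(l-n,F_i,\mathrm{St}) \, \overline{c_{F_i}(A)}$. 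The vanishing $\widetilde c_{i,A}=0$ when $l<k$ is then immediate from the corresponding vanishing of $c_{i,A}$ in Theorem \ref{th.pullback-Eisenstein}.

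The core step is the verification of the universal ($F$-independent) identity
$$S \cdot c(n,k,l) \cdot \Lambda(l-n,F,\mathrm{St}) = \widetilde \Lambda(l-n,F,\mathrm{St}),$$
which reduces to comparing the two completions of $L(l-n,F,\mathrm{St})$ appearing in (\ref{St}) and in the definition of $\Lambda(m,F,\mathrm{St})$ given just before Lemma \ref{lem.pullback}. The key observation is that once one unfolds $\Gamma_{\mathbb C}(s) = 2(2\pi)^{-s}\Gamma(s)$ inside $\widetilde\Lambda(l-n,F,\mathrm{St})$, the resulting factor $(2\pi)^{-(n+1)(l-n) - nk + n(n+1)/2}$ contributes a power of $\pi$ that cancels exactly against the explicit power of $\pi$ appearing in $\Lambda(l-n,F,\mathrm{St})$. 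What survives is
$$\frac{\widetilde\Lambda(l-n,F,\mathrm{St})}{\Lambda(l-n,F,\mathrm{St})} = 2^{(n+1)(n+1-l) - nk + n(n+1)/2} \cdot \frac{\prod_{i=1}^n \Gamma(l-n+k-i)}{\prod_{i=1}^n \Gamma(2k-n-i)},$$
whose gamma-factor part is exactly the gamma-factor part of $S$.

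All that remains is to confirm that $2^{(n+1)(n+1-l) - nk + n(n+1)/2}/c(n,k,l)$ recovers the claimed $\widetilde c(n,k,l) = (-1)^{l(n-1)/2 + n(n+1)/2} 2^{3kn - 2nl - n(n+1)/2 - 1}$. Using $c(n,k,l) = (-1)^{l(n-1)/2 - n(n+1)/2} 2^{-4kn + (n-1)l + 2n^2 + 3n + 2}$ and noting that $(-1)^{-l(n-1)/2} = (-1)^{l(n-1)/2}$ because $l$ is even, this reduces to the algebraic identity $(n+1)(n+1-l) - nk + n(n+1)/2 + 4kn - (n-1)l - 2n^2 - 3n - 2 = 3kn - 2nl - n(n+1)/2 - 1$, which is routine to check by expansion. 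The main obstacle is not conceptual but purely bookkeeping: the two versions of the completed standard $L$-function differ by several $\Gamma_{\mathbb R}$ and $\Gamma_{\mathbb C}$ factors, and one has to be careful not to drop a power of $2$ or $\pi$ in the unfolding.
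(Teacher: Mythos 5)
Your proposal is correct and is essentially the paper's own proof: the paper likewise reduces the corollary to the conversion identity $L_{alg}(l-n,F,\mathrm{St})=2^{e(k,l,n)}\,\frac{\prod_{i=1}^n\Gamma(l-n+k-i)}{\prod_{i=1}^n\Gamma(2k-n-i)}\,\frac{\Lambda(l-n,F,\mathrm{St})}{\langle F,F\rangle}$ with $e(k,l,n)=-kn-(n+1)l+3n^2/2+5n/2+1$, which is exactly your identity $S\cdot c(n,k,l)\cdot\Lambda(l-n,F,\mathrm{St})=\widetilde\Lambda(l-n,F,\mathrm{St})$ after unfolding $\Gamma_{\mathbb C}$, and your exponent bookkeeping (including the sign cancellation using that $l$ is even) checks out.
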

\begin{proof}
For any Hecke eigenform $F$ in $S_k(Sp_n(\mathbb Z))$ we have 
\begin{align*}
&L_{alg}(l-n,F,\mathrm{St})=2^{e(k,l,n)}{\prod_{i=1}^n \Gamma(l-n+k-i) \over 
\prod_{i=1}^n \Gamma(2k-n-i)}\frac{\Lambda(l-n,F,\mathrm{St})}{\langle F, F \rangle},
\end{align*}
where  $e(k,l,n)=-kn-(n+1)l+3n^2/2+5n/2+1$.
This proves the assertion.
\end{proof}

Now we give an explicit formula for $\stackrel {\!\!\!\!\!\! \circ} {{\mathcal D}_{n,k}^{2}}$.
The formula has been known in \cite[p. 289 (the top of (4.2.1))]{I-K-O} for any $n$ and $k$, and here we review it. Let
\[
T=\begin{pmatrix} R & W \\ ^{t}W & S \end{pmatrix}
\]
be a $2n \times 2n$ symmetric matrix of variables, where
$R,W,S$ are $n \times n$ matrices. We define $P_{\alpha}(T)$ for $0\leq \alpha \leq n$ by 
\[
\det\Big(\begin{pmatrix} xR & W \\ ^{t}W & S \end{pmatrix}\big)
=\sum_{\alpha=0}^{n}x^{\alpha}P_{\alpha}(T).\]
Define a polynomial  $Q_{n,k}^2(T)$  in $T$ as 
\[Q_{n,k}^2(T)=
\sum_{\gamma=0}^{n}\binom{n}{\gamma}^{-1}\binom{2k-n+1}{n-\gamma}P_{\gamma}(T),
\]
and put 
\[\mathbb D_{n,k}^2=Q_{n,k}^2(\partial _Z),\]
where $\partial_Z=\begin{pmatrix} {1+\delta_{ij} \over 2} {\partial 
\over \partial z_{ij}} \end{pmatrix}_{1\le i,j \le n}.$
We denote by $\mathrm{Hol}(\mathbb H_m)$ the $\mathbb C$-vector space of  holomorphic functions on $\mathbb H_m$. We then define
a mapping $\widetilde {\mathcal D}_{n,k}^2$ from $\mathrm{Hol}(\mathbb H_{2n})$ to
$\mathrm{Hol}(\mathbb H_n) \otimes \mathrm{Hol}(\mathbb H_n)$ as 
$$\widetilde {\mathcal D}_{n,k}^2(F)=\mathrm{Res}(\mathbb  {D}_{n,k}^2(F))$$
for $F \in \mathrm{Hol}(\mathbb H_{2n})$, 
where $\mathrm{Res}$ is the restriction of a function on 
$\mathbb H_{2n}$ to $\mathbb H_n \times \mathbb H_n$.
\begin{prop} \label{prop.dfiff-op}
 The operator 
$\widetilde {\mathcal D}_{n,k}^2$  maps an element $F$ of
$S_k(\Sp_{2n}(\mathbb Z))$ to $S_{k+2}(\Sp_n(\mathbb Z)) \otimes S_{k+2}(\Sp_n(\mathbb Z))$.
\end{prop}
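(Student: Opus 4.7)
The statement has two parts: (a) the image of $S_k(\Sp_{2n}(\mathbb Z))$ under $\widetilde{\mathcal D}_{n,k}^2$ lies in $M_{k+2}(\Sp_n(\mathbb Z))\otimes M_{k+2}(\Sp_n(\mathbb Z))$, and (b) cusp forms map to cusp forms. I would treat them separately.

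For (a), my plan is to identify $\widetilde{\mathcal D}_{n,k}^2$ with (a scalar multiple of) the Böcherer–Satoh operator $\stackrel{\!\!\!\!\circ}{\mathcal D}_{n,k}^2$ already introduced, for which the mapping property $M_k(\Sp_{2n}(\mathbb Z))\to M_{k+2}(\Sp_n(\mathbb Z))\otimes M_{k+2}(\Sp_n(\mathbb Z))$ is part of the setup from \cite{B-S}. Equivalently, one invokes Ibukiyama's general criterion: a polynomial $P(T)$ in $T=\begin{pmatrix}R & W\\ {}^tW & S\end{pmatrix}$ induces an operator $\mathrm{Res}\circ P(\partial_Z)$ landing in weight $(k+\nu_1,k+\nu_2)$ on the diagonal provided (i) $P(T[\mathrm{diag}(X_1,X_2)])=\det(X_1)^{\nu_1}\det(X_2)^{\nu_2} P(T)$ for $X_1,X_2\in \GL_n(\mathbb C)$, and (ii) a harmonicity condition in $W$ depending on $k,\nu_1,\nu_2$. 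The equivariance (i) with $\nu_1=\nu_2=2$ is immediate from the identity
\[
\det\begin{pmatrix} x\,{}^tX_1 R X_1 & {}^tX_1 W X_2\\ {}^tX_2{}^tW X_1 & {}^tX_2 S X_2\end{pmatrix}=\det(X_1)^2\det(X_2)^2\,\det\begin{pmatrix} xR & W\\ {}^tW & S\end{pmatrix},
\]
which gives $P_\gamma(T[\mathrm{diag}(X_1,X_2)])=\det(X_1)^2\det(X_2)^2 P_\gamma(T)$ for every $\gamma$, so the same holds for $Q_{n,k}^2$. The harmonicity (ii) is exactly what the specific binomial weighting $\binom{n}{\gamma}^{-1}\binom{2k-n+1}{n-\gamma}$ is designed to achieve, and this verification is the content of \cite[p.\,289 (4.2.1)]{I-K-O}.

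For (b), the argument is a Fourier-expansion argument. If $F\in S_k(\Sp_{2n}(\mathbb Z))$ has expansion $F(Z)=\sum_{T>0}c_F(T){\bf e}(\mathrm{tr}(TZ))$, then applying $Q_{n,k}^2(\partial_Z)$ termwise and restricting to $Z=\mathrm{diag}(Z_1,Z_2)$ gives
\[
\widetilde{\mathcal D}_{n,k}^2(F)(Z_1,Z_2)=\sum_{A_1,A_2}\Bigl(\sum_{R} c_F(B_R)\,Q_{n,k}^2(B_R)\Bigr){\bf e}(\mathrm{tr}(A_1Z_1+A_2Z_2)),
\]
where $B_R=\begin{pmatrix}A_1 & R/2\\ {}^tR/2 & A_2\end{pmatrix}$ and the inner sum ranges over $R\in M_n(\mathbb Z)$ with $B_R$ half-integral. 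Since $c_F(B_R)=0$ unless $B_R>0$, and positive definiteness of $B_R$ forces both principal submatrices $A_1$ and $A_2$ to be positive definite, the expansion is supported on pairs $(A_1,A_2)$ with $A_i\in \mathcal H_n(\mathbb Z)_{>0}$. Combined with (a), this puts the output in $S_{k+2}(\Sp_n(\mathbb Z))\otimes S_{k+2}(\Sp_n(\mathbb Z))$.

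The technical core, and the only non-formal step, is the verification of the harmonicity condition in (a). It amounts to a direct computation showing that the specific coefficient pattern in $Q_{n,k}^2$ is (up to scalar) the unique linear combination of the $P_\gamma$'s for which $P(\partial_Z)F|_{Z=\mathrm{diag}(Z_1,Z_2)}$ transforms holomorphically under the full symplectic group in each variable; I expect to delegate this to \cite{I-K-O} rather than reprove it, since the algebraic identity on binomial coefficients is the same one that underlies the classical Böcherer–Satoh formula.
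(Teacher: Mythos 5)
Your proposal is correct and takes essentially the same route as the paper: the paper's entire proof is the appeal to \cite[Proposition 4.1 and (4.2.1), p.~289]{I-K-O}, i.e., to Ibukiyama's equivariance-plus-harmonicity criterion, which is exactly what you invoke for part (a), with the harmonicity likewise delegated to that reference. Your explicit check of the $\GL_n\times\GL_n$-equivariance via the determinant identity and the Fourier-support argument for cuspidality in part (b) are precisely the ``simple computation'' the paper leaves to the reader.
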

\begin{proof}
The assertion is written in  \cite[4.2.1 in p. 289]{I-K-O}, and easily proved by a simple computation using \cite[Proposition 4.1]{I-K-O}.
\end{proof}
We consider the special case $n=3$. Then we can derive the following formula:
\begin{align*}
P_0(T)&=-(\det W)^2,\\
P_1(T)&=\sum_{i_1=1}^3 \sum_{4<i_2<i_3 \le 6, 1 \le i_4<i_5<i_6 \le 6 \atop
\{i_4,i_5,i_6 \} \cap \{i_1,i_2,i_3 \}=\emptyset} (-1)^{i_1+i_2+i_3} \det \Big(T \begin{pmatrix} i_1 & i_2 & i_3 \\ 1 & 2 &3 \end{pmatrix}\Big)  \det \Big(T \begin{pmatrix} i_4 & i_5 & i_6 \\ 4 & 5 &6 \end{pmatrix}\Big),\\
P_3(T)&=\det R \det S,\\
& \text{ and } \\
P_2(T)&=\det T-P_0(T)-P_1(T)-P_3(T),\end{align*}
and
\[Q_{3,k}^2(T)=\dfrac{2(k-1)(2k-3)(k-2)}{3}P_0(T)+\dfrac{(k-1)(2k-3)}{3}P_1(T)+\dfrac{2(k-1)}{3}P_2(T)+P_3(T).\]
To calculate the critical values of $L(s,F,\mathrm{St})$ for a Hecke eigenform $F$ in $S_{k+2}(\mathrm{Sp}_3(\mathbb {Z}))$, we have to compare the differential operator $\widetilde {\mathcal D}_{3,k}^2$ with $\stackrel {\!\!\!\!\!\! \circ} {{\mathcal D}_{3,k}^{2}}$.
\begin{prop}\label{prop.comparison-differential-operator}
We have 
$$\stackrel {\!\!\!\!\!\! \circ} {{\mathcal D}_{3,k}^{2}}=-2^{-4}3(2k-1)(2k-3)(k-1)\widetilde {\mathcal D}_{3,k}^2.$$
\end{prop}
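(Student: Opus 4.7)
The plan is to invoke uniqueness of covariant holomorphic differential operators and pin down the scalar by a single evaluation. Both $\stackrel {\!\!\!\!\!\! \circ} {{\mathcal D}_{3,k}^{2}}$ and $\widetilde {\mathcal D}_{3,k}^{2}$ are scalar-valued holomorphic differential operators that carry modular forms on $\Sp_6(\mathbb Z)$ of weight $k$ into bi-modular forms on $\Sp_3(\mathbb Z)\times \Sp_3(\mathbb Z)$ of bi-weight $(k+2,k+2)$; see the definition of $\stackrel {\!\!\!\!\!\! \circ} {{\mathcal D}_{n,l}^{\nu}}$ in \cite{B-S}, and Proposition~\ref{prop.dfiff-op} for $\widetilde {\mathcal D}_{3,k}^{2}$. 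By the uniqueness (up to scalar) of such bi-$\Sp_3$-covariant holomorphic operators of fixed order and weight jump (cf.\ the representation-theoretic discussion in \cite{I-K-O}), the space of such operators is one-dimensional, so we must have
\[\stackrel {\!\!\!\!\!\! \circ} {{\mathcal D}_{3,k}^{2}} = c(k)\,\widetilde {\mathcal D}_{3,k}^{2}\]
for some scalar $c(k)$ depending only on $k$.

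To compute $c(k)$, evaluate both sides on the test function $F(Z)={\bf e}(\mathrm{tr}(TZ))$ with $T=\begin{pmatrix} A_1 & R/2 \\ {}^tR/2 & A_2\end{pmatrix}$ a generic symmetric $6\times 6$ matrix. The left side yields $(2\pi\sqrt{-1})^{6}P_{3,k}^{2}(T)\,{\bf e}(\mathrm{tr}(A_1 Z_1+A_2 Z_2))$ by the defining identity for $P_{n,l}^{\nu}$ recalled in the excerpt; the right side, since $\partial_Z {\bf e}(\mathrm{tr}(TZ)) = 2\pi\sqrt{-1}\,T\,{\bf e}(\mathrm{tr}(TZ))$, yields $Q_{3,k}^{2}(2\pi\sqrt{-1}\,T)\,{\bf e}(\mathrm{tr}(A_1 Z_1+A_2 Z_2))$ after restriction. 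Each $P_\gamma$ is homogeneous of total degree $2n=6$ in the entries of $T$ (being the $x^\gamma$-coefficient of a determinant of a $6\times 6$ matrix whose entries are linear in $T$), so the $(2\pi\sqrt{-1})^{6}$ factors cancel, and the operator identity reduces to the polynomial identity $P_{3,k}^{2}(T) = c(k)\,Q_{3,k}^{2}(T)$, which needs to be tested on a single $T$. A convenient specialization is $A_1=A_2=0$: only the $P_0$-term in $Q_{3,k}^{2}$ survives and the explicit formulas in the excerpt give
\[Q_{3,k}^{2}(T)\bigm|_{A_1=A_2=0} = -\frac{2(k-1)(2k-3)(k-2)}{3}\bigl(\det(R/2)\bigr)^{2}.\]
Comparing this with the value of $P_{3,k}^{2}(T)$ at the same specialization, extracted from Bocherer-Satoh's construction of $\stackrel {\!\!\!\!\!\! \circ} {{\mathcal D}_{3,k}^{2}}$ as a contraction against a pluriharmonic covariant in the off-diagonal block, produces $c(k) = -2^{-4}\cdot 3(2k-1)(2k-3)(k-1)$.

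The main obstacle is the last step: computing $P_{3,k}^{2}(T)|_{A_1=A_2=0}$ precisely. This requires unwinding the construction of $\stackrel {\!\!\!\!\!\! \circ} {{\mathcal D}_{n,l}^{\nu}}$ in \cite{B-S} and carefully tracking the $\Gamma$-factors and combinatorial constants that appear in the normalization of the associated pluriharmonic polynomial. Once those normalizations are in hand, the remaining verification is a direct polynomial comparison; alternatively, one could cross-check the resulting scalar by applying both sides to a known Siegel Eisenstein series and matching the Fourier expansions through the pullback formula.
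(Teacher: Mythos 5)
Your proof has the same skeleton as the paper's --- both first invoke uniqueness (up to scalar) of the covariant operator to get $\stackrel {\!\!\!\!\!\! \circ} {{\mathcal D}_{3,k}^{2}} = c(k)\,\widetilde {\mathcal D}_{3,k}^{2}$, and both then try to fix $c(k)$ by a single evaluation. Your reduction to the polynomial identity $P_{3,k}^{2}(T)=c(k)Q_{3,k}^{2}(T)$ via exponential test functions is sound, and your computation of $Q_{3,k}^{2}$ at the specialization $A_1=A_2=0$ (only the $P_0$-term survives) is correct. But the argument stops exactly where the content of the proposition lies: you never compute $P_{3,k}^{2}(T)\big|_{A_1=A_2=0}$, you only state that it can be ``extracted from'' the construction in \cite{B-S} and then assert the value of $c(k)$. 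Since extracting the explicit Böcherer--Schmidt polynomial is precisely the nontrivial work (you yourself flag it as ``the main obstacle''), and since the exact constant is the entire statement being proved, this is a genuine gap rather than a deferral of routine detail. (Incidentally, the reference is Böcherer--Schmidt, not Böcherer--Satoh.)

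The paper closes this gap without ever needing the polynomial $P_{3,k}^{2}$: it evaluates both operators on the single test function $(\det W)^2$, where $W$ is the off-diagonal block of $Z\in\mathbb H_6$, because \cite[(1.21)]{B-S} supplies that value in closed form,
\[\stackrel {\!\!\!\!\!\! \circ} {{\mathcal D}_{3,k}^{2}}\bigl((\det W)^2\bigr)=\prod_{\nu=1}^2 C_3(\nu/2)\,C_3(k-1-\nu/2),\qquad C_3(s)=s(s+1/2)(s+1),\]
while on the other side only the $P_0(\partial_Z)$-term of $Q_{3,k}^{2}(\partial_Z)$ acts nontrivially on $(\det W)^2$, yielding $-\tfrac{2(k-1)(2k-3)(k-2)}{3}\,C_3(1)C_3(1/2)$; the ratio of these two numbers is exactly $-2^{-4}\cdot 3(2k-1)(2k-3)(k-1)$. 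The conceptual point is that, once proportionality is known, one does not need the restricted polynomial $P_{3,k}^{2}\big|_{A_1=A_2=0}$ (a degree-six polynomial in $W$); a single number --- the pairing of the operator against $(\det W)^2$, which is what \cite[(1.21)]{B-S} records --- already determines $c(k)$. To repair your write-up, replace your last step by this: apply both sides of $\stackrel {\!\!\!\!\!\! \circ} {{\mathcal D}_{3,k}^{2}} = c(k)\,\widetilde {\mathcal D}_{3,k}^{2}$ to $(\det W)^2$, quote \cite[(1.21)]{B-S} for the left-hand side, and compute the right-hand side by the Cayley-type identity $\det(\partial_W)(\det W)^s=s(s+1)(s+2)(\det W)^{s-1}$; everything else in your argument can then be discarded.
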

\begin{proof}
We note that 
 $\widetilde {\mathcal D}_{3,k}^{2}$ is  a 
constant multiple of $\stackrel {\!\!\!\!\!\! \circ} {{\mathcal D}_{3,k}^{2}}$  as 
a mapping from $\mathrm{Hol}(\mathbb H_6)$ to $\mathrm{Hol}(\mathbb H_3) \otimes \mathrm{Hol}(\mathbb H_3)$.
By \cite[(1.21)]{B-S}, we have 
\[\stackrel {\!\!\!\!\!\! \circ} {{\mathcal D}_{3,k}^{2}}((\det W)^2)=\prod_{\nu=1}^2 C_3(\nu/2)C_3(k-1-\nu/2).\]
On the other hand, we have 
\begin{align*}
&\widetilde  {\mathcal D}_{3,k}^2((\det W)^2)\\
&=-\dfrac{2(k-1)(2k-3)(k-2)}{3}P_0(\partial_Z)((\det W)^2)\\
& =-\dfrac{2(k-1)(2k-3)(k-2)}{3}C_3(1)C_3(1/2),
\end{align*}
where $C_3(s)=s(s+1/2)(s+1)$. This proves the  assertion.
\end{proof}
\begin{prop}\label{prop.FC}
For $A \in \mathcal H_3(\mathbb Z)_{>0}$, let 
$\widetilde {\mathcal G}_{6}^{k+2,2}(Z,A)$ be the element of $S_{k+2}(\Sp_3(\mathbb Z))$ defined in Corollary \ref{cor.pullback-Eisenstein}. Then, for any $A \in \mathcal H_3(\mathbb Z)_{>0}$, we have 
\begin{align}
&c_{\widetilde {\mathcal G}_{6}^{k+2,k}(*,A)}(A_1)={-3 \Gamma(2k-3) \Gamma(2k-4) \over 
 \Gamma(2k)\Gamma(2k-1) }2^{3k+7}(2k-1)(2k-3)(k-1)\\
& \times \sum_{R \in M_3(\mathbb Z)}
c_{6,k}\Big(\begin{pmatrix} A_i & R/2 \\ {}^tR/2 & A \end{pmatrix} \Big)Q_{3,k}^2\Big(\begin{pmatrix} A_i & R/2 \\ {}^tR/2 & A \end{pmatrix} \Big).\notag
\end{align}
\end{prop}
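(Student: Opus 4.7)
The plan is to unwind the layered definition of $\widetilde{\mathcal G}_6^{k+2,2}(Z,A)$ step by step, then use Proposition~\ref{prop.comparison-differential-operator} to convert the polynomial $P_{3,k}^{2}$ attached to $\stackrel{\circ}{\mathcal D}{}_{3,k}^{2}$ into the explicit polynomial $Q_{3,k}^{2}$ attached to $\widetilde{\mathcal D}_{3,k}^{2}$, and finally collect the scalar constants.

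First, I would specialise Corollary~\ref{cor.pullback-Eisenstein} to $n=3$, the weight parameter $K=k+2$, and $l=k$ (so that the ambient Eisenstein series is $E^{*}_{6,k}$, which is why $c_{6,k}$ appears on the right-hand side). This produces the prefactor
\[
\widetilde c(3,k+2,k)\,\frac{\prod_{i=1}^{3}\Gamma(2k-1-i)}{\prod_{i=1}^{3}\Gamma(2k+1-i)} \;=\; 2^{3k+11}\,\frac{\Gamma(2k-3)\Gamma(2k-4)}{\Gamma(2k)\Gamma(2k-1)},
\]
where the evaluation $\widetilde c(3,k+2,k)=2^{3k+11}$ uses the definition
$\widetilde c(n,K,l)=(-1)^{l(n-1)/2+n(n+1)/2}2^{3Kn-2nl-n(n+1)/2-1}$
together with the fact that $l=k$ is necessarily even for the Eisenstein series $E^{*}_{6,k}$ to enter the present setup. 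Equation (\ref{FC}) then expresses the $A_1$-th Fourier coefficient of $\mathcal G_6^{k+2,2}(*,A)$ as
\[
\sum_{R\in M_3(\mathbb Z)} P_{3,k}^{2}(T_R)\,c_{6,k}(T_R),\qquad T_R:=\begin{pmatrix} A_1 & R/2\\ {}^{t}R/2 & A\end{pmatrix}.
\]

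Next, I would show that $P_{3,k}^{2}$ and $Q_{3,k}^{2}$ differ by an explicit scalar. Each $P_{\alpha}(T)$ in the definition of $Q_{3,k}^{2}$ is homogeneous of total degree $6$ in the entries of $T$, so using $\partial_Z\,\mathbf e(\mathrm{tr}(TZ))=2\pi\sqrt{-1}\,T\,\mathbf e(\mathrm{tr}(TZ))$ and restricting to $Z_{12}=0$ one obtains
\[
\widetilde{\mathcal D}_{3,k}^{2}\bigl(\mathbf e(\mathrm{tr}(TZ))\bigr)=(2\pi\sqrt{-1})^{6}\,Q_{3,k}^{2}(T)\,\mathbf e(\mathrm{tr}(A_1Z_1+AZ_2)).
\]
Comparing with the analogous identity defining $P_{3,k}^{2}$ and applying Proposition~\ref{prop.comparison-differential-operator} then yields
\[
P_{3,k}^{2}(T)=-2^{-4}\cdot 3(2k-1)(2k-3)(k-1)\,Q_{3,k}^{2}(T).
\]

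Combining the two steps and collecting the constants $2^{3k+11}\cdot(-2^{-4}\cdot 3)=-3\cdot 2^{3k+7}$ reproduces the asserted identity. There is no genuine analytic obstacle here; the argument is essentially a bookkeeping exercise. The points requiring a little care are the sign coming from the parity of $l=k$ in $\widetilde c(3,k+2,k)$ and the verification that $P_{3,k}^{2}$ and $Q_{3,k}^{2}$ are homogeneous of the same degree, so that the comparison of the two differential operators applied to $\mathbf e(\mathrm{tr}(TZ))$ is legitimately interpreted as a scalar identity between polynomials.
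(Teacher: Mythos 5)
Your proposal is correct and follows exactly the route the paper intends: the paper states Proposition \ref{prop.FC} without a separate proof, as an immediate combination of Corollary \ref{cor.pullback-Eisenstein} (specialized to $n=3$, weight $k+2$, $l=k$), the Fourier-coefficient formula (\ref{FC}), and the operator comparison in Proposition \ref{prop.comparison-differential-operator}, which is precisely your argument. Your constant bookkeeping checks out as well: $\widetilde c(3,k+2,k)=2^{3k+11}$ (using that $l=k$ is even), the Gamma ratio $\Gamma(2k-3)\Gamma(2k-4)/\bigl(\Gamma(2k)\Gamma(2k-1)\bigr)$, and $2^{3k+11}\cdot\bigl(-2^{-4}\cdot 3\bigr)=-3\cdot 2^{3k+7}$ reproduce the stated formula.
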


The following theorem is a key to proving our main results.
\begin{thm}\label{thm.special-pullback} Let $F$ be a Hecke eigenform in 
$S_{k+2}(\Sp_3(\mathbb Z))$. 
Let $\dim S_{k+2}(\Sp_3(\mathbb Z))=d$ and that $F_1,\ldots,F_d$ be a basis of $S_{k+2}(\Sp_3(\mathbb Z))$ consisting of Hecke eigenforms such that $F_1=F$. Let $A_1,\ldots, A_d$ and $A$ be elements of $\mathcal H_3(\mathbb Z)_{>0}$ and put $a_{ij}=c_{F_j}(A_i)$ for $i=1,\ldots,d$ and $j=1,\ldots,d$.
Moreover, put $C(k;A_i,A)=c_{\widetilde {\mathcal G}_{6}^{k+2,k}(*,A)}(A_1)$.
Here $c_{6,k}(*)$ is the Fourier coefficient of the Siegel Eisenstein series $E_{6,k}^*$ as stated at the beginning of this section.
Suppose that $\det (a_{ij})_{1 \le i,j \le d} \not=0$.
Then,  we have
\begin{align*}|c_{F}(A)|^2 L_{alg}(k-3, F, \mathrm {St})&=
\frac{c_F(A)\begin{vmatrix} C(k;A_1,A) & a_{1,2} & \hdots & a_{1,d} \\
\vdots & \vdots & \ddots & \vdots \\
C(k;A_d,A) & a_{d,2} & \hdots & a_{d,d} \end{vmatrix}}{\det (a_{ij})_{1 \le i,j \le d}}.
\end{align*} 
In particular if $\dim S_{k+2}(\Sp_3(\mathbb Z))=1$, then
\begin{align*}|c_{F}(A)|^2 L_{alg}(k-3, F, \mathrm {St})=C(k;A,A).
\end{align*}
\end{thm}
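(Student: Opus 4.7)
The strategy is purely linear-algebraic: I would expand the pullback cusp form $\widetilde{\mathcal G}_6^{k+2,k}(\cdot,A)$ in the Hecke eigenbasis of $S_{k+2}(\Sp_3(\mathbb Z))$, read off the Fourier coefficients at the test matrices $A_1,\dots,A_d$ to obtain a square linear system, and invert it by Cramer's rule.

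First I would invoke Corollary~\ref{cor.pullback-Eisenstein} with $n=3$ and parameters chosen so that the critical argument $l-n$ equals $k-3$. Since the weight of the relevant Siegel Eisenstein series is strictly smaller than the target weight $k+2$, the hypothesis $l<k$ of the corollary applies, so the non-cuspidal tail $\sum_{i>d}\widetilde c_i F_i$ vanishes identically. This yields the expansion
\[
\widetilde{\mathcal G}_6^{k+2,k}(Z,A)=\sum_{j=1}^{d} L_{alg}(k-3,F_j,\mathrm{St})\,\overline{c_{F_j}(A)}\,F_j(Z).
\]

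Next, I would take the $A_i$-th Fourier coefficient of both sides for each $i=1,\dots,d$. Writing $y_j:=L_{alg}(k-3,F_j,\mathrm{St})\,\overline{c_{F_j}(A)}$ and recalling $a_{ij}=c_{F_j}(A_i)$, this produces the linear system
\[
C(k;A_i,A)=\sum_{j=1}^{d}a_{ij}\,y_j \qquad (i=1,\dots,d),
\]
whose coefficient matrix $(a_{ij})$ is nonsingular by hypothesis. Cramer's rule then expresses $y_1=L_{alg}(k-3,F,\mathrm{St})\,\overline{c_{F}(A)}$ as the quoted ratio of determinants; multiplying both sides by $c_F(A)$ converts $\overline{c_F(A)}\,c_F(A)$ into $|c_F(A)|^2$ and yields the first formula. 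The special case $\dim S_{k+2}(\Sp_3(\mathbb Z))=1$ is obtained by specializing $A_1=A$, so that $a_{11}=c_F(A)$ cancels against the $c_F(A)$ factor in the numerator, leaving $C(k;A,A)$.

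The argument is essentially automatic once Corollary~\ref{cor.pullback-Eisenstein} has been set up; the only point requiring care is checking that the parameters match correctly, so that the specialized critical $L$-value in the expansion is precisely $L_{alg}(k-3,F_i,\mathrm{St})$ and the cusp-form hypothesis of the corollary holds, guaranteeing the absence of the non-cuspidal contribution. After that, the proof is pure Cramer's rule applied to the Fourier-coefficient matrix.
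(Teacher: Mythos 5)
Your proposal is correct and follows essentially the same route as the paper: the paper's proof likewise applies Corollary~\ref{cor.pullback-Eisenstein} (with the non-cuspidal terms absent since the Eisenstein weight $k$ is less than the target weight $k+2$), reads off the $A_i$-th Fourier coefficients to get the linear system $c_F(A)C(k;A_i,A)=\sum_{j=1}^d a_{ij}\,c_F(A)\overline{c_{F_j}(A)}\,L_{alg}(k-3,F_j,\mathrm{St})$, and concludes by Cramer's rule. The only cosmetic difference is that the paper also cites Proposition~\ref{prop.comparison-differential-operator}, which is really needed to make $C(k;A_i,A)$ computable via Proposition~\ref{prop.FC} rather than for the theorem as stated, so your omission of it is harmless.
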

\begin{proof}
By Corollary \ref{cor.pullback-Eisenstein} and Proposition \ref{prop.comparison-differential-operator}, for any $1 \le i \le d$, we have
\[c_F(A)C(k;A_i,A)=\sum_{j=1}^d a_{ij} c_F(A)\overline{c_{F_j}(A)}L_{alg}(k-3,F_j,\mathrm{St}).\]
Thus the assertion can be derived by  Cramer's formula.
\end{proof}
\begin{rem}\label{rem.special-pullback} In view of Theorem \ref{thm.standard-L-of-IM-lift2}, to confirm Conjecture \ref{conj.A'} for the Ikeda-Miyawaki lift $F$ in $S_{k+2}(\Sp_3(\mathbb Z))$, 
we  have to compute the $L$-value $L(m,F,\mathrm{St})$ at least one  critical point $m$ such that $1 \le m \le k-1$ and $m \equiv 1 \mod 2$.
Besides $m=k-3$, we can obtain a formula  also for $|c_{F}(A)|^2 L_{alg}(k-1, F, \mathrm {St})$ using Corollary \ref{cor.pullback-Eisenstein}. In this case $\widetilde {\mathcal G}_{6}^{k+2,0}(Z,A)$ is essentially obtained by  the restriction of
$E_{6,k+2}^*$ without differential operator.
However, to get it we have to solve a system of linear equations of $e$-variables with $e=\dim_{\mathbb C} M_{k+2}(Sp_3(\mathbb Z))$.
 On the other hand, in the above proposition, we only have to solve  a system of linear equations of $d$-variables with $d=\dim S_{k+2}(\Sp_3(\mathbb Z))$. 
For example, in the case $k=14$, we have $d=7$ and $e=3$.
This implies that the pullback formula with differential operator allows us to compute the value in question efficiently.
\end{rem}
We have an explicit formula for $F_p^*(T,X)$ for any half-integral matrix $T$ over ${\mathbb Z}_p$ (cf. \cite{Kat99}, \cite{Ikeda-Katsurada18},\cite{Ikeda-Katsurada22}), and an algorithm for computing it (cf. \cite{Lee18}). Therefore, by Proposition \ref{prop.FC-Siegel-Eisenstein}, we can compute 
$c_{6,k}(B)$ for any $B \in \mathcal H_6(\mathbb Z)_{\ge 0}$.
Therefore, by using  the above theorem together with Proposition \ref{prop.FC}, we can compute 
$|c_{F}(A)|^2 L_{alg}(k-3, F, \mathrm {St})$ with the aid of Mathematica, as will be shown in Propositions \ref{prop.numerical-example2} and \ref{prop.numerical-example-16-2}.

\section{Computation of $L_{alg}(l_1,l_2;f)$}
For our later purpose, we explain how to compute $L_{alg}(l_1,l_2;f)$ following \cite{Za}.
Let $l_1,l_2,k$ be positive integers satisfying the following conditions: 
\begin{align}  \label{*} l_1-l_2 \ge  3 \text{ is odd},   k \text{ is even and  }l_1+1 < k \le l_1+l_2-3.
\end{align}
Put $\nu=k-l_1-1$, and define a function $G_\nu(E^*_{l_1-l_2+1},E^*_{l_1+l_2-k+1})(\tau)$ on $\mathbb H_1$ as 
\begin{align*}
&G_\nu(E^*_{l_1-l_2+1},E^*_{l_1+l_2-k+1})(\tau)\\
&=(2\pi \sqrt{-1})^{-\nu}\sum_{\mu=0}^{\nu} (-1)^{\nu-\mu}\begin{pmatrix} \nu \\ \mu \end{pmatrix} \frac{\Gamma(l_1-l_2+1+\nu)\Gamma(l_1+l_2-k+1+\nu)}{\Gamma(l_1-l_2+1+\mu)\Gamma(l_1+l_2-k+1+\nu-\mu)} \\
& \times \frac{\partial^{\mu} E^*_{l_1-l_2+1}}{\partial \tau^{\mu}} \frac{\partial^{\nu-\mu} E^*_{l_1+l_2-k+1}}{\partial \tau^{\nu-\mu}}.
\end{align*}
Here, for a positive even integer $l$, let $E_l^*=E_{1,l}^*$ is the Eisenstein series defined in Section 4.
Then, $G_\nu(E^*_{l_1-l_2+1},E^*_{l_1+l_2-k+1})$ belongs to $S_k(SL_2(\mathbb Z))$ (cf. \cite[p. 147]{Za}).
Let $f_1,\ldots,f_d$ be a basis of $S_k(SL_2(\mathbb Z))$ consisting of primitive forms. Then, by \cite[(77)]{Za}, similarly to Corollary \ref{cor.pullback-Eisenstein} we have
\begin{align*}
G_\nu(E^*_{l_1-l_2+1},E^*_{l_1+l_2-k+1})(\tau)=\gamma(k,l_1) \sum_{i=1}^d 
L_{alg}(l_1,l_2;f) f_i(\tau),\end{align*}
where $\gamma(k,l_1)=\frac{(-1)^{k/2}\Gamma(k-1)}{2^{k-1}\Gamma(l_1)}.$ Thus, similarly to Theorem \ref{thm.special-pullback}, we obtain the following formula for $L(l_1,l_2,f)$
\begin{prop}\label{prop.product-Hecke-L}
Let $l_1,l_2,k$ be positive integers satisfying the conditions (\ref{*}), and $f_1,\ldots,f_d$ be as above. Let $m_1,\ldots,m_d$ be positive integers, and put $a_{ij}=c_{f_j}(m_i)$. Write
\[G_\nu(E^*_{l_1-l_2+1},E^*_{l_1+l_2-k+1})(\tau)=\sum_{m=1}^\infty C(k,l_1,l_2;m){\bf e}(m\tau).\]
Suppose that $\det (a_{ij})_{1 \le i,j \le d} \not =0$ and put $f=f_1$. Then we have
\begin{align*} \gamma(k,l_1) L_{alg}(l_1,l_2;f)&=
\frac{\begin{vmatrix} C(k,l_1,l_2;m_1) & a_{1,2} & \hdots & a_{1,d} \\
\vdots & \vdots & \ddots & \vdots \\
C(k,l_1,l_2;m_d) & a_{d,2} & \hdots & a_{d,d} \end{vmatrix}}{\det (a_{ij})_{1 \le i,j \le d}}.
\end{align*}
\end{prop}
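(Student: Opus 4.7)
The plan is to mimic the linear-algebra argument used to prove Theorem \ref{thm.special-pullback}, with the Cohen--Kuznetsov-type combination $G_\nu$ of elliptic Eisenstein series playing the role of the pullback of the Siegel Eisenstein series. The essential input is the spectral expansion
\[G_\nu(E^*_{l_1-l_2+1},E^*_{l_1+l_2-k+1})(\tau)=\gamma(k,l_1)\sum_{j=1}^d L_{alg}(l_1,l_2;f_j)\,f_j(\tau),\]
which is the formula of \cite[(77)]{Za} recalled immediately above the statement, and which is meaningful because the left-hand side belongs to $S_k(SL_2(\mathbb Z))$ under the hypothesis (\ref{*}) (see \cite[p.~147]{Za}).

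First I would take the $m_i$-th Fourier coefficient of both sides for each $i=1,\ldots,d$. Since $c_{f_j}(m_i)=a_{ij}$, this produces the linear system
\[C(k,l_1,l_2;m_i)=\gamma(k,l_1)\sum_{j=1}^d a_{ij}\,L_{alg}(l_1,l_2;f_j)\qquad(i=1,\ldots,d).\]
Setting $x_j=\gamma(k,l_1)\,L_{alg}(l_1,l_2;f_j)$, one obtains the matrix equation $(C(k,l_1,l_2;m_i))_i=(a_{ij})(x_j)_j$, whose coefficient matrix is invertible by the hypothesis $\det(a_{ij})\neq 0$. Cramer's rule applied to the first unknown $x_1=\gamma(k,l_1)\,L_{alg}(l_1,l_2;f)$ then gives the claimed determinant quotient.

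There is no substantive obstacle once the spectral expansion is granted: the remainder is precisely the Cramer's rule recipe already carried out in the proof of Theorem \ref{thm.special-pullback}. The only subtlety is ensuring that the $d$-tuple $(m_1,\ldots,m_d)$ one chooses actually gives $\det(a_{ij})\neq 0$, but this is a hypothesis of the proposition rather than a step in the proof; in practice, since $\{f_1,\ldots,f_d\}$ is linearly independent, such a choice is always possible and can be found by direct inspection of the first few Fourier coefficients of the basis.
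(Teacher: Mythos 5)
Your proposal is correct and follows exactly the paper's route: the spectral expansion $G_\nu(E^*_{l_1-l_2+1},E^*_{l_1+l_2-k+1})=\gamma(k,l_1)\sum_{j=1}^d L_{alg}(l_1,l_2;f_j)f_j$ from \cite[(77)]{Za} (stated just before the proposition), followed by extraction of the $m_i$-th Fourier coefficients and Cramer's rule, which is precisely the argument the paper invokes by saying ``similarly to Theorem \ref{thm.special-pullback}.'' Nothing is missing.
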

\begin{rem} \label{rem.product-Hecke-L} \begin{itemize}
\item [(1)] A result similar to above holds also for $k=l_1+1$ (i.e. $\nu=0$). 
\item[(2)] The above method  cannot be applied to the case $l_1=l_2+1$ as it is  because $E_{1,2}^*$ is not a holomorphic modular form. But it is nearly holomorphic, and a result similar to above can be obtained by modifying the above method (cf. \cite{Sh0}).
\end{itemize}
\end{rem}

\section{Examples}
To give examples, for an even integer $l \ge 6$, we define a function $\delta_l(\tau)$ on $\mathbb H_1$ as
\[\delta_l(\tau)=\frac{1}{8\pi \sqrt{-1}} \Big((\frac{l}{2}-1)E_{l-2}^*(4\tau)\frac{d}{d\tau}\theta(\tau)-\theta(\tau)\frac{d}{d\tau}E_{l-2}^*(4\tau) \Big),\]
where 
\[\theta(\tau)=1+2\sum_{n=1}^{\infty} {\bf e}(n^2\tau).\]
Then, $\delta_l(\tau)$ belongs to $S_{l+1/2}^+(\varGamma_0(4))$ (cf. \cite[p. 187]{K-Z}).
\subsection{The case $k=10$} Let $k=10$ and $n=1$. 
Then $S_{21/2}^+(\varGamma_0(4))$ and $S_{20}(\SL_2(\mathbb Z))$ are one dimensional, and 
$h(\tau):=120 \delta_6(\tau)$ is a Hecke eigenform in $S_{21/2}^+(\varGamma_0(4))$ with the following  Fourier expansion:
$$h(\tau)=q-56q^4+360q^5-13680q^8 \cdots,$$
where $q={\bf e}(\tau)$.
(cf. \cite[p. 178]{K-Z}). 
The primitive form $f$ in $S_{20}(\SL_2(\mathbb Z))$ corresponding to $h$ has the following Fourier expansion:
$$f(\tau)=q+456q^2+\cdots.$$
Moreover, $S_{12}(\SL_2(\mathbb Z))$ is also one dimensional, and it is spanned by the Ramanujan delta function, which is denoted by $g_{12}$.
Moreover, $S_{12}(\Sp_3({\mathbb Z}))$ is one dimensional and spanned by $\widetilde {\mathcal F}_{h,g_{12}}$. To verify Ikeda's conjecture in this case, we provide several propositions.

\begin{prop}\label{prop.example-Fourier-IM-lift}
Let $h,f$ and $g_{12}$ be as above.
Let $A=\begin{pmatrix} 1 & 0 & 1/2 \\ 0 & 1 & 1/2 \\ 1/2 & 1/2 & 1\end{pmatrix}$.
Then we have \\
$c_{\widetilde {\mathcal F}_{h,g_{12}}}(A)=-2^7 \cdot 3^3 \cdot 5$.
\end{prop}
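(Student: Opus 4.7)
The plan is to compute $c_{\widetilde{\mathcal F}_{h,g_{12}}}(A)$ directly from the defining pullback identity for the Ikeda-Miyawaki lift, combined with the explicit Fourier-coefficient formula for the Ikeda lift $I_{4}(h)$ given in Section 3. Since $S_{12}(\SL_2(\mathbb Z))$ is one-dimensional and $g_{12}$ (the Ramanujan delta function) is its normalized spanning eigenform with $c_{g_{12}}(1) = 1$, the identity
$$I_{4}(h)\!\left(\begin{pmatrix} z & 0 \\ 0 & w \end{pmatrix}\right) = \sum_{i=1}^{d_2} \widetilde{\mathcal F}_{h,g_i}(z)\, g_i(w)$$
collapses to a single term $\widetilde{\mathcal F}_{h,g_{12}}(z)\, g_{12}(w)$. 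Comparing the coefficient of ${\bf e}(\mathrm{tr}(Az))\,{\bf e}(w)$ on both sides then yields
$$c_{\widetilde{\mathcal F}_{h,g_{12}}}(A) = \sum_{r \in \mathbb Z^3,\, T_r > 0} c_{I_{4}(h)}(T_r), \qquad T_r := \begin{pmatrix} A & r/2 \\ {}^t r/2 & 1 \end{pmatrix}.$$
By Schur-complement positivity, the finite index set is cut out by ${}^t r\, A^{-1} r < 4$; the explicit inverse $A^{-1}$ obtained from $\det A = 1/2$ gives a totally explicit bounded ellipsoidal region in which to enumerate lattice points $r$.

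Next I would apply the Ikeda formula (with $k = 10$, $n = 1$)
$$c_{I_{4}(h)}(T_r) = c_h(|\mathfrak d_{T_r}|)\,\mathfrak f_{T_r}^{19/2}\prod_{p \mid \mathfrak f_{T_r}} \alpha_p^{-\mathrm{ord}_p(\mathfrak f_{T_r})}\, F_p\!\left(T_r,\, p^{-5/2}\alpha_p\right)$$
to each $T_r$ arising from the enumeration. For each such $T_r$ the recipe is: (i) factor $\det(2 T_r)$ as $\mathfrak d_{T_r}\mathfrak f_{T_r}^{2}$ with $\mathfrak d_{T_r}$ the fundamental discriminant of $\mathbb Q(\sqrt{\det(2T_r)})$; (ii) read $c_h(|\mathfrak d_{T_r}|)$ from the expansion $h = q - 56 q^4 + 360 q^5 - 13680 q^8 + \cdots$; (iii) evaluate the local Siegel polynomial $F_p(T_r, X)$ at $X = p^{-5/2}\alpha_p$ for the primes $p \mid \mathfrak f_{T_r}$, using the explicit formulas or algorithm recalled at the end of Section 4 (cf. \cite{Kat99}, \cite{Ikeda-Katsurada18}, \cite{Ikeda-Katsurada22}, \cite{Lee18}). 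The Satake parameters $\alpha_p$ are determined from $\alpha_p + \alpha_p^{-1} = p^{-19/2}c_f(p)$ with $c_f(2) = 456$ and so on. Summing the contributions should reproduce the claimed value $-2^7 \cdot 3^3 \cdot 5$.

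The main obstacle is the bookkeeping of the local Siegel factors. Even though the number of $r$ with $T_r > 0$ is modest, each $T_r$ is $4 \times 4$ half-integral, and for the small primes dividing $\mathfrak f_{T_r}$ (notably $p = 2$) one must carefully diagonalize $T_r$ over $\mathbb Z_p$ and invoke the correct evaluation formula for $F_p$. One can reduce the workload by grouping the $r$ into $\mathrm{Stab}_{\GL_{3}(\mathbb Z)}(A)$-orbits, but the essential labor lies in a Mathematica-aided evaluation of the $F_p$'s (as indicated in Remark \ref{rem.special-pullback}) and a final arithmetic check that the resulting rational number is $-2^{7}\cdot 3^{3}\cdot 5$.
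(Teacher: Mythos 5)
Your proposal is correct and takes essentially the same route as the paper: the one-dimensionality of $S_{12}(\SL_2(\mathbb Z))$ collapses the pullback identity to $I_4(h)\bigl(\begin{smallmatrix} z & 0 \\ 0 & w\end{smallmatrix}\bigr)=\widetilde{\mathcal F}_{h,g_{12}}(z)g_{12}(w)$, so that $c_{\widetilde{\mathcal F}_{h,g_{12}}}(A)$ equals the finite sum of $c_{I_4(h)}\bigl(\begin{smallmatrix} A & r/2 \\ {}^t r/2 & 1\end{smallmatrix}\bigr)$ over the vectors $r$ with ${}^t r A^{-1} r<4$, each term evaluated by Ikeda's explicit Fourier coefficient formula with $k=10$, $n=1$. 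The paper simply carries out the enumeration and evaluation explicitly (the $15$ admissible $r$ fall into the orbits $\{0\}$, $\mathcal R_1$, $\mathcal R_2$, contributing $-13680$, $8\cdot 360$, and $6\cdot(-1080)$ via $F_2(T,X)=1-12X+32X^2$), which is exactly what your plan, correctly specialized as you state it, would reproduce.
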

\begin{proof}
Since $S_{12}(\SL_2(\mathbb Z))$ is one dimensional, we have 
$$I_4(h)\Big(\begin{pmatrix} z & O \\ O & w \end{pmatrix}\Big)=\widetilde F_{h,g_{12}}(z)g_{12}(w).$$
Hence we have 
$$\sum_{r \in M_{3,1}(\mathbb Z)} c_{I_4(h)}\Big(\begin{pmatrix} 
A & r/2 \\ {}^tr/2 & 1 \end{pmatrix}\Big)=c_{\widetilde {\mathcal F}_{h,g_{12}}}(A).$$
By a simple computation, we have
\begin{align*}
&\sum_{r \in M_{3,1}(\mathbb Z)} c_{I_4(h)}\Big(\begin{pmatrix} 
A & r/2 \\ {}^tr/2 & 1 &\end{pmatrix}\Big)\\
&=c_{I_4(h)}\Big(\begin{pmatrix}A & \bf 0 \\
{}^t \bf 0 & 1 \end{pmatrix}\Big)+\sum_{r \in \mathcal R_1} c_{I_4(h)}\Big(\begin{pmatrix} 
A & r/2 \\ {}^tr/2 & 1 \end{pmatrix}\Big)+\sum_{r \in \mathcal R_2} c_{I_4(h)}\Big(\begin{pmatrix} 
A & r/2 \\ {}^tr/2 & 1 \end{pmatrix}\Big),
\end{align*}
where 
$$\mathcal R_1=\Big\{\pm \begin{pmatrix} 0 \\ 1 \\ 0 \end{pmatrix}, \pm \begin{pmatrix} 0 \\ 1 \\ 1 \end{pmatrix},\pm \begin{pmatrix} 1 \\ 0 \\ 0 \end{pmatrix},\pm\begin{pmatrix} 1 \\ 0 \\ 1 \end{pmatrix}\Big\}, \text{ and } \mathcal R_2=\Big\{\pm \begin{pmatrix} 0 \\ 0 \\ 1 \end{pmatrix}, \pm \begin{pmatrix} 1 \\ 1 \\ 1 \end{pmatrix},\pm \begin{pmatrix} 1 \\ -1 \\ 0 \end{pmatrix}\Big\}.$$
\end{proof}
We see that we have $\det \Big(2 \begin{pmatrix}A & \bf 0 \\
{}^t \bf 0 & 1 \end{pmatrix}\Big)=\mathfrak d_{\big(\begin{smallmatrix}A & \bf 0 \\
{}^t \bf 0 & 1 \end{smallmatrix}\big)} =8$. Hence we have 
$$c_{I_4(h)}\Big(\begin{pmatrix}A & \bf 0 \\
{}^t \bf 0 & 1 \end{pmatrix}\Big)=c_h(8)=-13680.$$
Similarly, for any $r \in \mathcal R_1$, we have
$\det \Big(2 \begin{pmatrix}A & r/2 \\
{}^t r/2 & 1 \end{pmatrix}\Big)=\mathfrak d_{\big(\begin{smallmatrix}A & r/2 \\
{}^tr/2 & 1 \end{smallmatrix}\big)} =5$, and 
$$c_{I_4(h)}\Big(\begin{pmatrix}A & r/2 \\
{}^t r/2 & 1 \end{pmatrix}\Big)=c_h(5)=360.$$
For any $r \in \mathcal R_2$, we have $\mathfrak d_{\big(\begin{smallmatrix}A &  r/2 \\
{}^tr/2 & 1 \end{smallmatrix}\big)} =1$ and $\mathfrak f_{\big(\begin{smallmatrix}A & r/2 \\
{}^tr/2 & 1 \end{smallmatrix}\big)} =2$. By \cite{Kat99} we have
\[F_2(\begin{pmatrix}A & r/2 \\
{}^t r/2 & 1 \end{pmatrix},X)=1-12X+32X^2.\]
Hence we have
\begin{align*}
&c_{I_4(h)}\Big(\begin{pmatrix}A & r/2 \\
{}^t r/2 & 1 \end{pmatrix}\Big)=c_h(1)2^{19/2}\alpha_{f,2}^{-1} F_2(\begin{pmatrix}A & r/2 \\
{}^t r/2 & 1 \end{pmatrix},2^{-5/2}\alpha_{f,2})\\
&=2^{19/2}(\alpha_{f,2}+\alpha_{f,2}^{-1}-2^{-1/2}\cdot 3)
=c_f(2)-2^9 \cdot 3=-1080.
\end{align*}
This proves the assertion.

\begin{prop}\label{prop.example-L-values}
\begin{itemize}
\item[(1)] We have $L_{alg}(22,f \otimes g_{12} \otimes g_{12})=\dfrac{2^{51} \cdot 5}{11 \cdot 17}$.
\item[(2)] We have $L_{alg}(7,g,\mathrm{St})=\dfrac{2^{15}}{3}$.
\item[(3)] We have $L_{alg}(11,10;f)L_{alg}(17,16;f)=\dfrac{2^{34} \cdot 13} {3^4\cdot  5 \cdot 17^{2}}$.
\end{itemize}
\end{prop}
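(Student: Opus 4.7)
The plan is to compute each of the three algebraic $L$-values using the algorithms already developed in this paper and in \cite{I-K-P-Y}, exploiting the one-dimensionality of both $S_{20}(\SL_2(\mathbb Z))$ and $S_{12}(\SL_2(\mathbb Z))$ so that all determinantal projections collapse to single Fourier-coefficient comparisons.

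For (1), the triple product value sits at the top of the balanced critical range, since $(k_1+k_2+k_3)/2-1=21$ and $k_2+k_3-2=22$, so the algorithm of \cite[Theorem 4.9]{I-K-P-Y} applies at $l=22$. I would construct the Rankin-Cohen type bracket of $g_{12}$ against the appropriate Eisenstein series prescribed there, obtain the resulting cusp form in $S_{20}(\SL_2(\mathbb Z))$ with explicit Fourier expansion, and extract $L_{alg}(22, f\otimes g_{12}\otimes g_{12})$ by matching one non-zero Fourier coefficient against that of $f$.

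For (2), the symmetric-square $L$-value $L_{alg}(7, g_{12}, \mathrm{St})$ is a classical critical value, and from the Euler-product identity one has
\[
L(s, g_{12}\otimes g_{12}) = \zeta(s-11)\,L(s-11, g_{12}, \mathrm{St}).
\]
Specializing at $s=18$, the Rankin-Selberg integral of $|g_{12}|^2 y^{12}$ against the non-holomorphic Eisenstein series $E(\tau,s)$ unfolds to an explicit Gamma factor times $\zeta(2s)^{-1}L(s+11, g_{12}\otimes g_{12})$; since $\dim S_{12}=1$ this integral is computable directly, or alternatively the value at $s=7$ is available in the literature (e.g.\ \cite{Za}).

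For (3), both pairs $(l_1,l_2)=(11,10)$ and $(17,16)$ have $l_1-l_2=1$, so condition (\ref{*}) of Proposition \ref{prop.product-Hecke-L} fails: the would-be holomorphic Eisenstein series $E^*_{l_1-l_2+1}=E^*_2$ does not exist, and one must invoke Remark \ref{rem.product-Hecke-L}(2). Concretely, form the modified Cohen-type bracket using the nearly holomorphic $E_2^*$, apply holomorphic projection, and compare the resulting element of $S_{20}(\SL_2(\mathbb Z)) = \mathbb C f$ with $f$ on a single Fourier coefficient to extract each $L_{alg}(l_1, l_2; f)$ individually. The main obstacle lies here: one must carefully track the correction terms induced by the non-holomorphicity of $E_2^*$ and by the projection, together with the Gamma factors, powers of $2$, $\pi$, $\sqrt{-1}$, and normalization constants from (V1), (\ref{Riemann-zeta}), and (\ref{St-Hecke}), so that the final closed-form rational numbers come out as stated. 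Once these normalizations are settled, each evaluation reduces to an exact finite computation, executable in Mathematica.
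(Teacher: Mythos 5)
For parts (1) and (2) your route is essentially the paper's: the triple-product value at $l=22$ is exactly the one tabulated in \cite[Table 2]{I-K-P-Y} (computed by the algorithm of \cite[Theorem 4.9]{I-K-P-Y} you describe), and $L_{alg}(7,g_{12},\mathrm{St})$ is quoted from \cite[Table 1]{Du}; nothing more is done there, and your Rankin--Selberg identity for (2) is a correct alternative derivation of the same number.

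For part (3), however, you have missed the one idea that makes the paper's proof short, and your substitute is not a complete argument. You correctly note that $(11,10)$ and $(17,16)$ both have $l_1-l_2=1$, so condition (\ref{*}) fails and Proposition \ref{prop.product-Hecke-L} cannot be applied to these pairs as they stand. But the statement only asks for the \emph{product} of the two values, and from the definition (\ref{V1}) with trivial characters one gets, for $f$ of weight $k=20$,
\begin{align*}
L_{alg}(11,10;f)\,L_{alg}(17,16;f)
&=\frac{\Gamma_{\mathbb C}(10)\Gamma_{\mathbb C}(11)\Gamma_{\mathbb C}(16)\Gamma_{\mathbb C}(17)\,
L(10,f)L(11,f)L(16,f)L(17,f)}{\sqrt{-1}^{\,56}\,\langle f,f\rangle^2}\\
&=L_{alg}(17,10;f)\,L_{alg}(16,11;f),
\end{align*}
since the exponents of $\sqrt{-1}$ match ($22+34=28+28$). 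The regrouped pairs $(17,10)$ and $(16,11)$ have $l_1-l_2=7$ and $5$, both odd and $\ge 3$, and the remaining inequalities in (\ref{*}) hold with $k=20$ (namely $18<20\le 24$ and $17<20\le 24$), so Proposition \ref{prop.product-Hecke-L} applies verbatim to each factor and the product is obtained by a finite Fourier-coefficient computation, using $\dim S_{20}(\SL_2(\mathbb Z))=1$ exactly as you intend. By contrast, the route you propose --- forming the bracket with the nearly holomorphic $E_2^*$ and then holomorphically projecting --- is what Remark \ref{rem.product-Hecke-L}(2) alludes to (cf. \cite{Sh0}), and it could in principle yield each factor individually; but the ``correction terms'' you defer are precisely the substance of that method, and as written your proposal leaves them unresolved, so it does not yet prove the stated value. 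The regrouping identity costs nothing and avoids the nearly holomorphic issue entirely.
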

\begin{proof}
The assertions (1) and (2) follow from \cite[Table 2]{I-K-P-Y} and \cite[Table1]{Du}, respectively. As explained in Remark \ref{rem.product-Hecke-L}, we cannot compute 
either  $L_{alg}(11,10;f)$ or $L_{alg}(17,16;f)$ individually 
using Proposition \ref{prop.product-Hecke-L}.
However, we have
$$L_{alg}(11,10;f)L_{alg}(17,16;f)=L_{alg}(17,10;f)L_{alg}(16,11;f).$$
Then the value in (3) can be computed using Proposition \ref{prop.product-Hecke-L}.
\end{proof}

\begin{prop} \label{prop.numerical-example2}
Let $h,g_{12}$ and  $A$ be as  above. Then 
$$|c_{\widetilde {\mathcal F}_{h,g_{12}}}(A)|^2 L_{alg}(7, \widetilde {\mathcal F}_{h,g_{12}} , \mathrm {St})=\dfrac{2^{37} \cdot3^2 \cdot 11 \cdot 13}{ 17}.$$
\end{prop}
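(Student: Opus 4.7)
The plan is to exploit Theorem \ref{thm.special-pullback} in the one-dimensional case. Since $S_{12}(\Sp_3(\mathbb Z))$ is one-dimensional and spanned by $\widetilde{\mathcal F}_{h,g_{12}}$, applying that theorem with $k=10$ (in the notation of the theorem, so that $k+2=12$) yields
\[
|c_{\widetilde{\mathcal F}_{h,g_{12}}}(A)|^2\, L_{alg}(7, \widetilde{\mathcal F}_{h,g_{12}}, \mathrm{St}) \;=\; C(10; A, A),
\]
so the whole task reduces to computing the single Fourier coefficient $c_{\widetilde{\mathcal G}_6^{12,10}(*, A)}(A)$.

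Next I would expand $C(10; A, A)$ via Proposition \ref{prop.FC} as an explicit rational constant times
\[
\sum_{R \in M_3(\mathbb Z)} c_{6,10}(T_R)\, Q_{3,10}^2(T_R), \qquad T_R := \begin{pmatrix} A & R/2 \\ {}^tR/2 & A \end{pmatrix}.
\]
The sum is effectively finite, because $c_{6,10}(T_R)=0$ unless $T_R\ge 0$, and positivity of $A$ bounds the entries of $R$ through the Schur-complement inequality $4A - R A^{-1}\,{}^tR \ge 0$. The polynomial $Q_{3,10}^2$ is written out in closed form immediately before Proposition \ref{prop.dfiff-op}, so evaluation at each admissible $T_R$ is mechanical.

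The Fourier coefficients $c_{6,10}(T_R)$ are then computed via Proposition \ref{prop.FC-Siegel-Eisenstein}. For each admissible $R$, one extracts the non-degenerate part $\widetilde{T_R}$ of $T_R$ (only ranks $3,4,5,6$ occur, since the upper-left block $A$ already contributes rank $3$), computes the discriminant $\mathfrak d_{\widetilde{T_R}}$ and the Kronecker character $\chi^\ast_{T_R}$, assembles the local polynomials $F_p(\widetilde{T_R}, X)$ at each prime $p\mid \det(2\widetilde{T_R})$ from the explicit formulas of \cite{Kat99} (or the algorithm of \cite{Lee18}), and multiplies by the appropriate $L$-value of $\chi^\ast_{T_R}$ together with Riemann zeta values, as in the statement of Proposition \ref{prop.FC-Siegel-Eisenstein}.

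The main obstacle is computational rather than conceptual: enumerating all $R\in M_3(\mathbb Z)$ with $T_R\ge 0$, distinguishing the different rank strata when invoking Proposition \ref{prop.FC-Siegel-Eisenstein}, and reliably evaluating the local Siegel polynomials at each relevant prime. As indicated in Remark \ref{rem.special-pullback}, this is precisely where Mathematica is used; once all contributions are collected the sum collapses to the clean rational number $2^{37}\cdot 3^2 \cdot 11 \cdot 13 / 17$, giving the claimed identity.
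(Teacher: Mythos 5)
Your proposal is correct and follows essentially the same route as the paper: the paper's proof likewise invokes Theorem \ref{thm.special-pullback} in the one-dimensional case, expands $C(10;A,A)$ via Proposition \ref{prop.FC} (writing the constant $\frac{-3\Gamma(17)\Gamma(16)}{\Gamma(20)\Gamma(19)}2^{37}\cdot 19\cdot 17\cdot 9$ explicitly) as a sum over $R\in M_3(\mathbb Z)$ of $c_{6,10}(T_R)\,Q_{3,10}^2(T_R)$, and concludes by a Mathematica computation. The extra details you supply---finiteness of the sum via the Schur-complement bound and the evaluation of $c_{6,10}(T_R)$ through Proposition \ref{prop.FC-Siegel-Eisenstein} and the local Siegel series---are exactly what the paper leaves implicit in the phrase ``verified by a computation with Mathematica.''
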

\begin{proof}
By Theorem \ref{thm.special-pullback}, we have 
\begin{align*} |c_{\widetilde {\mathcal F}_{h,g_{12}}}(A)|^2 L_{alg}(7, \widetilde {\mathcal F}_{h,g_{12}} , \mathrm {St})&=\frac{-3\Gamma(17)\Gamma(16)}{ \Gamma(20)\Gamma(19)}2^{37}\cdot 19\cdot 17\cdot 9\\
&\times\sum_{R \in M_3(\mathbb Z)}c_{6,10}\Big(\begin{pmatrix} A & R/2 \\ {}^tR/2 & A \end{pmatrix} \Big)Q_{3,10}^2\Big(\begin{pmatrix} A & R/2 \\ {}^tR/2 & A \end{pmatrix} \Big).
\end{align*}
Thus the assertion can be verified  by a computation with Mathematica.
\end{proof}
By Propositions \ref{prop.example-Fourier-IM-lift}, \ref{prop.example-L-values}, \ref{prop.numerical-example2} combined with (\ref{PIM}), we can make sure  the following theorem.

\begin{thm}
\label{thm.main-example}
Let $h$ and $g_{12}$ be as above. Then
$C_{h,g_{12}}=2^{14}$, that is,  Conjecture \ref{conj.A'} holds for this case, and so does Conjecture \ref{conj.A}.
\end{thm}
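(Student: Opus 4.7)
The plan is to apply identity (\ref{PIM}) of Theorem \ref{thm.standard-L-of-IM-lift2} with $n=1$, $k=10$, $l=7$ to the pair $(h,g_{12})$ and to the half-integral matrix $A$ fixed in Proposition \ref{prop.example-Fourier-IM-lift}. In this situation the prefactor $(-1)^{n+[(n+1)/2]}$ equals $1$, the empty product $\prod_{i=1}^{n-1}L_{alg}(2i+1,f,\mathrm{St})$ equals $1$, and the two remaining products collapse to the single factors $L_{alg}(17,16;f)$ and $\widetilde\xi(2)=1/12$ respectively. Since $c_h(1)=1$, the factor $|c_h(1)|^2$ in the denominator disappears, while the factor $|c_{\widetilde{\mathcal F}_{h,g_{12}}}(A)|^2$ is nonzero by Proposition \ref{prop.example-Fourier-IM-lift} and can be cancelled from both sides of (\ref{PIM}). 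Formula (\ref{PIM}) thereby reduces to a single linear relation in the one unknown $C_{h,g_{12}}$.

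All the other quantities entering this relation have already been made explicit earlier in this section. The left-hand side $|c_{\widetilde{\mathcal F}_{h,g_{12}}}(A)|^2 L_{alg}(7,\widetilde{\mathcal F}_{h,g_{12}},\mathrm{St})$ is computed in Proposition \ref{prop.numerical-example2} using the pullback machinery of Theorem \ref{thm.special-pullback}, while the required $L$-values on the right-hand side, namely $L_{alg}(22,f\otimes g_{12}\otimes g_{12})$, $L_{alg}(7,g_{12},\mathrm{St})$, and the product $L_{alg}(11,10;f)L_{alg}(17,16;f)=L_{alg}(k+n,k;f)\cdot L_{alg}(17,16;f)$, are furnished by Proposition \ref{prop.example-L-values}(1), (2), (3) respectively. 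It is important to note that $L_{alg}(11,10;f)$ and $L_{alg}(17,16;f)$ appear in (\ref{PIM}) only through their product, which is precisely what Proposition \ref{prop.product-Hecke-L} (together with the symmetry $L_{alg}(11,10;f)L_{alg}(17,16;f)=L_{alg}(17,10;f)L_{alg}(16,11;f)$) allows us to compute, even though the method cannot isolate either factor individually.

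Substituting the six numerical values into the reduced form of (\ref{PIM}) and performing elementary arithmetic simplification of the rational expression yields $C_{h,g_{12}}=2^{14}$, which coincides with $2^{(2n-1)k+4n}$ for $n=1,k=10$. This is exactly the assertion of Conjecture \ref{conj.A'}. Because Proposition \ref{prop.example-Fourier-IM-lift} simultaneously certifies that $\mathcal F_{h,g_{12}}$ is not identically zero, Conjectures \ref{conj.A} and \ref{conj.A'} are equivalent in this instance, and Conjecture \ref{conj.A} follows as well. The only delicate point is the book-keeping of the various powers of $2$ and small-prime factors entering (\ref{PIM}); this is tedious but entirely routine, and no conceptual obstacle remains once the critical values above have been secured.
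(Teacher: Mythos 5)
Your proposal is correct and follows essentially the same route as the paper, which likewise proves Theorem \ref{thm.main-example} by substituting the values of Propositions \ref{prop.example-Fourier-IM-lift}, \ref{prop.example-L-values} and \ref{prop.numerical-example2} into formula (\ref{PIM}) with $n=1$, $k=10$, $l=7$ and solving the resulting linear relation for $C_{h,g_{12}}$. The only bookkeeping caveat is that, since Proposition \ref{prop.numerical-example2} computes the product $|c_{\widetilde{\mathcal F}_{h,g_{12}}}(A)|^2 L_{alg}(7,\widetilde{\mathcal F}_{h,g_{12}},\mathrm{St})$ rather than the $L$-value alone, the explicit value of $c_{\widetilde{\mathcal F}_{h,g_{12}}}(A)$ from Proposition \ref{prop.example-Fourier-IM-lift} is still needed after the cancellation you describe, not merely its nonvanishing.
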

\subsection{The case $k=14$}

Let $k=14$ and $n=1$. Then $S_{28}(\SL_2(\mathbb Z))$ is two dimensional and  spanned by two primitive forms $f_{\pm}(\tau)$
such that
\[f_{\pm}(\tau)=q+(-4140 \pm 108\lambda)q^2+\cdots.\]
The space $S_{29/2}^+(\varGamma_0(4))$ is also  two dimensional and spanned by  
$\delta_6(\tau)E_8^*(\tau)$ and $\delta_8(\tau)E_6^*(\tau)$.
Then, by a simple computation, we see that we can take 
a basis $\{ h_+,h_-\}$ of $S_{29/2}^+(\varGamma_0(4))$ consisiting of Hecke eigenforms such that 
\begin{align*}h_{\pm}(\tau)=q +(- 12332 \pm 108\lambda )q^4 +( 123360 \mp 
 1080 \lambda)q^5 + (1126824 \mp 10152 \lambda)q^8 +\cdots\end{align*}
with $\lambda= \sqrt{18209}$.
We note that $f_{\pm}$ corresponds to $h_{\pm}$ under the Shimura correspondence.
Let $g_{16}$ be the unique primitive form in $S_{16}(\SL_2(\mathbb Z))$.
Let $A$ be as in Section 6.1, and $A_1=\begin{pmatrix} 1 & 0 & 0 \\ 0 & 1 & 1/2 \\ 0 & 1/2 & 1 \end{pmatrix}$, and $A_2=\begin{pmatrix} 1 & 0 & 0 \\ 0 & 1 & 0 \\ 0 & 0 & 1 \end{pmatrix}$. Then, $S_{16}(\Sp_3({\mathbb Z}))$ is three  dimensional and spanned by the Hecke eigenforms $f_1,f_2,f_3$ such that
\[c_{f_1}(A)=c_{f_2}(A)=c_{f_3}(A)=1,\]
\[c_{f_1}(A_1)=3(29+\lambda)/26, c_{f_2}(A_1)=3(29-\lambda)/26,c_{f_3}(A_1)=16,\]
and
\[c_{f_1}(A_2)=2(2293+19\lambda)/13,c_{f_2}(A_2)=2(2293-19\lambda)/13,c_{f_3}(A_2)=-40\]
(cf. \cite[Table 6]{I-K-P-Y} and {\bf Corrections to \cite{I-K-P-Y}} bellow).
We note that 
\[f_1=c\widetilde {\mathcal F}_{h_+,g_{16}} , \  f_2=c\widetilde {\mathcal F}_{h_-,g_{16}} \text{ with some  } c \in \mathbb Q(\lambda)^\times.\]

\begin{prop}
\label{prop.example-Fourier-IM-lift16}
Let $h_+,f_+$ and $g_{16}$ be as above.
Then we have \\
$c_{\widetilde {\mathcal F}_{h_+,g_{16}}}(A)=-2^5 \cdot 567 (-107 + \lambda)$.
\end{prop}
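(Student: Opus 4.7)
The plan is to mirror the argument of Proposition~\ref{prop.example-Fourier-IM-lift} verbatim, changing only the arithmetic input. The starting observation is that $S_{16}(\SL_2(\mathbb Z))$ is one-dimensional, spanned by $g_{16}$, so the diagonal restriction of the Ikeda lift factors as
\[
I_4(h_+)\Big(\begin{pmatrix} z & O \\ O & w \end{pmatrix}\Big) = \widetilde {\mathcal F}_{h_+,g_{16}}(z)\, g_{16}(w).
\]
Comparing the ${\bf e}(\mathrm{tr}(Az)){\bf e}(w)$-coefficients on both sides and using $c_{g_{16}}(1)=1$ reduces the problem to evaluating the finite sum
\[
c_{\widetilde {\mathcal F}_{h_+,g_{16}}}(A) = \sum_{r \in M_{3,1}(\mathbb Z)} c_{I_4(h_+)}\Big(\begin{pmatrix} A & r/2 \\ {}^tr/2 & 1 \end{pmatrix}\Big).
\]

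Since the matrix $A$ here is identical to the one used in Proposition~\ref{prop.example-Fourier-IM-lift}, the set of $r \in M_{3,1}(\mathbb Z)$ for which $T_r := \begin{pmatrix} A & r/2 \\ {}^tr/2 & 1 \end{pmatrix}$ is positive definite, together with the invariants $(\mathfrak d_{T_r}, \mathfrak f_{T_r})$ and the local polynomial $F_2(T_r,X) = 1 - 12X + 32X^2$ appearing in the non-trivial case, all carry over unchanged. Thus the sum again splits into exactly three groups: $r = 0$ (with $(\mathfrak d, \mathfrak f) = (8,1)$), the eight vectors in $\mathcal R_1$ (with $(5,1)$), and the six vectors in $\mathcal R_2$ (with $(1,2)$).

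The only modifications required are arithmetic. One substitutes the Fourier coefficients $c_{h_+}(1), c_{h_+}(5), c_{h_+}(8) \in \mathbb Q(\lambda)$ read off from the given expansion of $h_+$; uses the exponent $\mathfrak f_{T_r}^{k-1/2}$ with the new $k=14$; and uses the Satake parameter $\alpha_{f_+,2}$ determined by $\alpha_{f_+,2}+\alpha_{f_+,2}^{-1} = 2^{-27/2}c_{f_+}(2)$ for the weight-$28$ eigenform $f_+$. Plugging these into the Ikeda formula for $c_{I_4(h_+)}(T_r)$, grouping by the three orbits, and simplifying yields the asserted value $-2^5 \cdot 567(-107+\lambda)$.

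No new structural ingredient is needed beyond what was already used in the $k=10$ case. The only practical obstacle is keeping signs and coefficients consistent across the Galois-conjugate pair $(h_\pm, f_\pm)$ indexed by $\pm\lambda = \pm\sqrt{18209}$, and this is best handled by direct substitution.
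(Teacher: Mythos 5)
Your proposal is correct and follows essentially the same route as the paper: the paper's proof likewise invokes the one-dimensionality of $S_{16}(\SL_2(\mathbb Z))$, reuses the orbit decomposition $\{0\}\cup\mathcal R_1\cup\mathcal R_2$ from the $k=10$ case (valid since $\mathfrak d_{T_r}$, $\mathfrak f_{T_r}$ and $F_2(T_r,X)$ depend only on $T_r$, not on $k$), and reduces to the identity $c_{\widetilde {\mathcal F}_{h_+,g_{16}}}(A)=c_{h_+}(8)+8c_{h_+}(5)+6\bigl(c_{f_+}(2)-2^{13}\cdot 3\bigr)$, which your substitution of $\mathfrak f^{27/2}$ and $\alpha_{f_+,2}+\alpha_{f_+,2}^{-1}=2^{-27/2}c_{f_+}(2)$ reproduces exactly. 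The arithmetic indeed yields $1941408-18144\lambda=-2^5\cdot 567(-107+\lambda)$, confirming the stated value.
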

\begin{proof}
Similarly to Proposition \ref{prop.example-Fourier-IM-lift}, we have
\[c_{\widetilde {\mathcal F}_{h_+,g_{16}}}(A)=c_{h_+}(8)+8c_{h_+}(5)+6(c_{f_+}(2)-2^{13}\cdot 3).\]
This proves the assertion.
\end{proof}
\begin{prop}
\label{prop.example-L-values-16}
\begin{itemize}
\item[(1)] We have 
\[L_{alg}(30,f_+ \otimes g_{16} \otimes g_{16})=
-\dfrac{2^{61} \cdot 21 (81594529 + 464593 \lambda)}{2586133225}.
\]
\item[(2)] We have $L_{alg}(11,g_{16},\mathrm{St})=
\dfrac{2^{19} \cdot 3^2 \cdot 839}{13}$.
\item[(3)] We have 
\[L_{alg}(15,14;f_+)L_{alg}(25,24;f_+)=
\dfrac{2^{48} \cdot (26136063 + 188401 \lambda)}{27962195625}.\]
\end{itemize}
\end{prop}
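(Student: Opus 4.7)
The plan is to mirror the structure of the proof of Proposition~\ref{prop.example-L-values}, treating each of the three factors with the machinery developed in the preceding sections and in \cite{I-K-P-Y}.

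For part~(1), the triple product $L$-value sits in the balanced critical range for $(k_1,k_2,k_3)=(28,16,16)$: one checks $k_2+k_3=32>28=k_1$ and $(k_1+k_2+k_3)/2-1=29\le 30\le 30=k_2+k_3-2$, so $l=30$ is a critical point. The algorithm of \cite[Theorem~4.9]{I-K-P-Y} therefore applies. Since $S_{28}(\SL_2(\mathbb Z))$ is two-dimensional over $\mathbb Q$ and $f_+,f_-$ are Galois-conjugate with Hecke field $\mathbb Q(\lambda)$, the algorithm has to be executed with $\mathbb Q(\lambda)$-coefficients and then projected onto the $f_+$-component, which accounts for the occurrence of $\lambda$ in the stated answer.

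For part~(2), $g_{16}$ is the unique normalized primitive form in $S_{16}(\SL_2(\mathbb Z))$, so $L_{alg}(11,g_{16},\mathrm{St})\in\mathbb Q$. I would either quote a tabulated value, as was done via \cite[Table~1]{Du} for the $k=10$ case, or compute it directly from the Rankin--Selberg integral representation of the symmetric-square $L$-function at its critical point $s=11$.

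For part~(3), the product $L_{alg}(15,14;f_+)\,L_{alg}(25,24;f_+)$ cannot be attacked directly by Proposition~\ref{prop.product-Hecke-L}, because each pair $(l_1,l_2)$ has $l_1-l_2=1$, violating $l_1-l_2\ge 3$. Exactly as in Proposition~\ref{prop.example-L-values}(3), the normalization (\ref{V1}) is symmetric in $(l_1,l_2)$, so
\[
L_{alg}(15,14;f_+)\,L_{alg}(25,24;f_+)=L_{alg}(25,14;f_+)\,L_{alg}(24,15;f_+).
\]
With $k=28$, both pairs on the right satisfy the hypotheses (\ref{*}) of Section~5 ($25-14=11$ odd, $26<28\le 36$; and $24-15=9$ odd, $25<28\le 36$). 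Hence Proposition~\ref{prop.product-Hecke-L} applies with $d=\dim S_{28}(\SL_2(\mathbb Z))=2$, using $\nu=k-l_1-1=2$ (so one works with $G_2(E^*_{12},E^*_{12})$) for the first factor, and $\nu=3$ (so with $G_3(E^*_{10},E^*_{12})$) for the second. Taking $(m_1,m_2)=(1,2)$ gives the nonvanishing determinant $\det(c_{f_j}(m_i))_{i,j}=-216\lambda$, and the two resulting $2\times 2$ determinantal formulas can be evaluated and multiplied. The principal obstacle is the bookkeeping in $\mathbb Q(\lambda)$: every Fourier coefficient of $G_2(E^*_{12},E^*_{12})$ and $G_3(E^*_{10},E^*_{12})$, together with the $c_{f_\pm}(m)$ and the intermediate $2\times 2$ determinants, must be carried exactly, and the final collapse to the stated clean expression is a verification best performed with a computer algebra system such as Mathematica.
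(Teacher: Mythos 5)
Your proposal matches the paper's own proof in all three parts: (1) applies \cite[Theorem 4.9]{I-K-P-Y} for the balanced triple product value with the eigenform-projection handled over $\mathbb{Q}(\lambda)$ (the paper makes this explicit via the Hecke polynomial $\Phi(X)$ of $T(2)$ and the factor $\Phi'(c_{f_+}(2))^{-1}$, which is exactly your projection onto the $f_+$-component); (2) quotes \cite[Table 1]{Du}; and (3) uses the symmetry $L_{alg}(15,14;f_+)L_{alg}(25,24;f_+)=L_{alg}(25,14;f_+)L_{alg}(24,15;f_+)$ to move into the range (\ref{*}) where Proposition \ref{prop.product-Hecke-L} applies, finishing with a computer-algebra evaluation. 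Your verification of the hypotheses and the identification of $G_2(E^*_{12},E^*_{12})$ and $G_3(E^*_{10},E^*_{12})$ are correct and consistent with the paper.
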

\begin{proof}
(1) On \cite[Table 2]{I-K-P-Y},  only  the norm of $L_{alg}(30,f_+ \otimes g_{16} \otimes g_{16})$ has been given, and  here we give the value itself.
For non-negative integers $i_1,i_2,i_3$, let $b(i_1,i_2,i_3,28,16,16;2)$ be the quantity defined in \cite[p. 166]{I-K-P-Y}, and let 
\[{\bf M}(4,f_+,g_{16},g_{16})=2^{-57}\begin{pmatrix} 28 \\ 14 \end{pmatrix} \begin{pmatrix} 14 \\ 2 \end{pmatrix} L_{alg}(30,f_+ \otimes g_{16} \otimes g_{16})\]
as define in \cite[p.164]{I-K-P-Y}. Let $\Phi(X)$ be the Hecke polynomial of the Hecke operator $T(2)$ acting on $S_{28}(SL_2(\mathbb Z))$. Then we have 
$\Phi(X)=-195250176 + 8280 X + X^2$.  Then by \cite[Theorem 4.9]{I-K-P-Y}, we have
\begin{align*}
&{\bf M}(4,f_+,g_{16},g_{16})\\
&=\Phi'(c_{f_+}(2))^{-1}\sum_{i_1=0}^1 \sum_{j_1~0}^{1-i_1}a_{j_1}c_{f_+}(2)^{1-i_1-j_1}b(i_1,0,0,4,28,16,16;2)
\end{align*}
where    $a_{0}=1$ and $a_1=8280$. Thus the assertion (1) can be verified by a computation with Mathematica.

(2) The assertion (2) follows from \cite[Table1]{Du}. (There are some errors  on that table, but  they are corrected  in www.neil-dummigan.staff.shef.ac.jp.)

(3) We have
$$L_{alg}(15,14:f)L_{alg}(25,24;f_+)=L_{alg}(25,14,f_+)L_{alg}(24,15,f_+).$$
Then the assertion (3) can be confirmed using Proposition \ref{prop.product-Hecke-L}.
\end{proof}
\begin{prop} 
\label{prop.numerical-example-16-2}
Let $h_+,g_{16}$ and  $A_0$ be as  above. Then 
\begin{align*}
&|c_{\widetilde {\mathcal F}_{h_+,g_{16}}}(A)|^2 L_{alg}(11, \widetilde {\mathcal F}_{h_+,g_{16}} , \mathrm {St})\\
&=\dfrac{2^{49} \cdot 34862967 (-222920204581 + 1281418453 \lambda)}{633217975}
.\end{align*}
\end{prop}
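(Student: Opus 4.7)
The plan is to apply Theorem \ref{thm.special-pullback} in the case $k=14$, where $\dim S_{16}(\Sp_3(\mathbb Z)) = 3$, with the Hecke eigenbasis $\{f_1,f_2,f_3\}$ and chosen test matrices $A,A_1,A_2 \in \mathcal H_3(\mathbb Z)_{>0}$ from Section 6.2, taking $F = f_1 = c\,\widetilde{\mathcal F}_{h_+,g_{16}}$ (the constant $c$ cancels in the product $|c_F(A)|^2 L_{alg}(11,F,\mathrm{St})$, so we may work with $f_1$ directly).

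First I would check the non-vanishing hypothesis $\det(a_{ij})_{1\le i,j\le 3}\neq 0$, where $a_{ij}=c_{f_j}(A_{i-1})$ with $A_0:=A$; this follows directly from the explicit Fourier coefficients of $f_1,f_2,f_3$ listed in Section 6.2 (the matrix involves $1,1,1$ in the first column and the $\lambda$-values in the others, and its determinant is a nonzero element of $\mathbb Q(\lambda)$). Next I would compute the three pullback Fourier coefficients
\[
C(14;A_i,A) = c_{\widetilde{\mathcal G}_6^{16,14}(*,A)}(A_i) \qquad (i=0,1,2)
\]
via Proposition \ref{prop.FC}, i.e.\ as the sum
\[
\sum_{R\in M_3(\mathbb Z)} c_{6,14}\!\left(\begin{pmatrix} A_i & R/2 \\ {}^tR/2 & A \end{pmatrix}\right) Q_{3,14}^2\!\left(\begin{pmatrix} A_i & R/2 \\ {}^tR/2 & A \end{pmatrix}\right)
\]
multiplied by the explicit constant $\frac{-3\Gamma(25)\Gamma(24)}{\Gamma(28)\Gamma(27)}\,2^{49}\cdot 27\cdot 25\cdot 13$ that comes from the factor $\widetilde c(3,16,14)\cdot\frac{\prod \Gamma(l-n+k-i)}{\prod\Gamma(2k-n-i)}$ together with the normalization of Proposition \ref{prop.comparison-differential-operator}.

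The Fourier coefficients $c_{6,14}(B)$ are computed by Proposition \ref{prop.FC-Siegel-Eisenstein}, which reduces them to local Siegel series polynomials $F_p^*(B,X)$ (computable by the explicit formula/algorithm of \cite{Kat99}, \cite{Lee18}) times an archimedean factor involving $\zeta$ and $L(\cdot,\chi_B^*)$. Positive definiteness of $A$ and $A_i$ bounds $R$ to a finite set via the constraint $\begin{pmatrix} A_i & R/2 \\ {}^tR/2 & A \end{pmatrix}\ge 0$, so the sum is finite. With these ingredients assembled, Theorem \ref{thm.special-pullback} yields
\[
|c_{f_1}(A)|^2 L_{alg}(11,f_1,\mathrm{St}) = \frac{c_{f_1}(A)\begin{vmatrix} C(14;A,A) & a_{1,2} & a_{1,3}\\ C(14;A_1,A) & a_{2,2} & a_{2,3}\\ C(14;A_2,A) & a_{3,2} & a_{3,3}\end{vmatrix}}{\det(a_{ij})},
\]
and the same value holds with $f_1$ replaced by $\widetilde{\mathcal F}_{h_+,g_{16}}$.

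The main obstacle is the scale of the computation: evaluating all the $F_p^*\!(B,X)$ at the many six-by-six semi-positive definite matrices that arise as $R$ ranges over its finite box, assembling the $\zeta$-factors and $\chi_B^*$-values, applying the differential-operator polynomial $Q_{3,14}^2$ (which involves cubic polynomials $P_0,P_1,P_2,P_3$ in the entries of $T$ with large weight-dependent coefficients $\tfrac{2(k-1)(2k-3)(k-2)}{3}$ etc.), and then simplifying the resulting algebraic numbers in $\mathbb Q(\lambda)$. This is carried out with Mathematica, exactly as in Proposition \ref{prop.numerical-example2}, and yields the stated value after rationalizing with respect to $\lambda=\sqrt{18209}$.
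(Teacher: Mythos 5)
Your proposal is correct and follows essentially the same route as the paper: reduce to $f_1$ using the scale-invariance of $|c_F(A)|^2L_{alg}(11,F,\mathrm{St})$ (the paper notes this quantity equals $L_{alg}(11,f_1,\mathrm{St})$ since $c_{f_1}(A)=1$), apply Theorem \ref{thm.special-pullback} with the basis $f_1,f_2,f_3$ and matrices $A,A_1,A_2$, evaluate $C(14;A_i,A)$ via Proposition \ref{prop.FC} (your constant $\frac{-3\Gamma(25)\Gamma(24)}{\Gamma(28)\Gamma(27)}2^{49}\cdot 27\cdot 25\cdot 13$ is exactly the specialization at $k=14$), and finish by a Mathematica computation. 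Your added checks (non-vanishing of $\det(a_{ij})$, finiteness of the $R$-sum) are correct details the paper leaves implicit.
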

\begin{proof}
We note that 
\[|c_{\widetilde {\mathcal F}_{h_+,g_{16}}}(A)|^2 L_{alg}(11, \widetilde {\mathcal F}_{h_+,g_{16}} , \mathrm {St})=L(11,f_1,\mathrm{St}).\]
Therefore, by Theorem \ref{thm.special-pullback} we have
\[|c_{\widetilde {\mathcal F}_{h_+,g_{16}}}(A)|^2 L_{alg}(11, \widetilde {\mathcal F}_{h_+,g_{16}} , \mathrm {St})=\frac{\begin{vmatrix} C(14;A,A) & 1 & 1 \\ C(14;A_1,A) & c_{f_2}(A_1) & c_{f_3}(A_1) \\ C(14;A_2,A) & c_{f_2}(A_2) & c_{f_3}(A_2) \end{vmatrix}}{\begin{vmatrix} 1 & 1 & 1 \\ c_{f_1}(A_1) & c_{f_2}(A_1) & c_{f_3}(A_1) \\ c_{
f_1}(A_2) & c_{f_2}(A_2) & c_{f_3}(A_2) \end{vmatrix}}.\]
Here for $A, B \in \mathcal H_3(\mathbb Z)_{>0}$, $C(k;B,A)$ is that defined in Theorem \ref{thm.special-pullback}.
Thus the assertion can be verified  by a computation with Mathematica.
\end{proof}
By an argument similar to that in Theorem 
\ref{thm.main-example}, we can deduce the following theorem.
\begin{thm}
\label{thm.main-example16}
Let $h_+$ and $g_{16}$ be as above. Then, $C_{h_+,g_{16}}=2^{18}$, that is, Conjecture \ref{conj.A'} holds true for $h_+,g_{16}$ and so does Conjecture \ref{conj.A}.
\end{thm}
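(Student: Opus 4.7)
The plan is to mimic the proof of Theorem \ref{thm.main-example}, with all numerical inputs updated to the $k=14$ setting.

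First, I would specialize formula (\ref{PIM}) of Theorem \ref{thm.standard-L-of-IM-lift2} to the parameters $k=14$, $n=1$, and $l=11$; the choice $l=11$ is forced by the fact that this is the critical point at which the standard $L$-value on the left-hand side has been pre-computed in Proposition \ref{prop.numerical-example-16-2}. Under these parameters the sign $(-1)^{n+[(n+1)/2]}$ is $+1$, the factor $2^{k-1}$ becomes $2^{13}$, $c_{h_+}(1)=1$, the product $\prod_{i=1}^{n-1} L_{alg}(2i+1, f, \mathrm{St})$ is empty, and the remaining products collapse to $L_{alg}(25,24; f_+)$ and $\tilde\xi(2) = 1/12$. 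Cancelling the common factor $|c_{\widetilde{\mathcal F}_{h_+, g_{16}}}(A)|^2$ from both sides of (\ref{PIM}) then gives the closed-form expression
\[
C_{h_+, g_{16}} \,=\, \frac{12\, L_{alg}(11, \widetilde{\mathcal F}_{h_+, g_{16}}, \mathrm{St})\, L_{alg}(30, g_{16} \otimes g_{16} \otimes f_+)}{2^{13}\, L_{alg}(15,14; f_+)\, L_{alg}(11, g_{16}, \mathrm{St})\, L_{alg}(25,24; f_+)}.
\]

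Second, I would plug in the numerical data. The three denominator $L$-values come from Proposition \ref{prop.example-L-values-16} (which already uses the identity $L_{alg}(15,14;f_+)L_{alg}(25,24;f_+) = L_{alg}(25,14;f_+)L_{alg}(24,15;f_+)$ to access the otherwise inaccessible individual factors). For the numerator, $L_{alg}(30, g_{16} \otimes g_{16} \otimes f_+)$ is given by Proposition \ref{prop.example-L-values-16}(1), and $L_{alg}(11, \widetilde{\mathcal F}_{h_+, g_{16}}, \mathrm{St})$ is extracted from Proposition \ref{prop.numerical-example-16-2} by dividing by $|c_{\widetilde{\mathcal F}_{h_+, g_{16}}}(A)|^2$, whose value in $\mathbb{Q}(\lambda)$ is supplied by Proposition \ref{prop.example-Fourier-IM-lift16}. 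A symbolic computation in Mathematica then collapses the right-hand side to $2^{18}$. Finally, $\widetilde{\mathcal F}_{h_+, g_{16}}$ is a nonzero scalar multiple of the Hecke eigenform $f_1 \in S_{16}(\mathrm{Sp}_3(\mathbb Z))$ and so is nonvanishing, whence Conjecture \ref{conj.A'}, and therefore the equivalent Conjecture \ref{conj.A}, is established in this case.

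The main obstacle, in contrast to the rational case $k=10$, is precisely this final arithmetic check in the quadratic field $\mathbb{Q}(\sqrt{18209})$: the six occurrences of irrational $L$-values in numerator and denominator must conspire so that the $\lambda$-component cancels completely and the ratio is an exact rational power of $2$. Any inconsistency in the normalization of the periods $\Omega_{\pm}(f_+)$ or of the algebraic parts $L_{alg}(\cdot)$ would manifest as a non-rational residue; the fact that everything collapses to $2^{18}$ simultaneously verifies the conjecture and the mutual compatibility of the normalization choices.
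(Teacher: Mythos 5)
Your strategy is exactly the paper's own: the paper proves this theorem by combining Propositions \ref{prop.example-Fourier-IM-lift16}, \ref{prop.example-L-values-16} and \ref{prop.numerical-example-16-2} with formula (\ref{PIM}) specialized to $k=14$, $n=1$, $l=11$, precisely as you outline, and your identifications of the sign $(-1)^{n+[(n+1)/2]}=+1$, of $2^{k-1}=2^{13}$, of $c_{h_+}(1)=1$, of the empty product, of $L_{alg}(25,24;f_+)$ and of $\widetilde\xi(2)=1/12$ are all correct. But your closed-form expression for $C_{h_+,g_{16}}$ puts the zeta factor on the wrong side. In (\ref{PIM}) the product $\prod_{i=1}^{n}\widetilde\xi(2i)$ sits in the \emph{denominator} of the right-hand side, so when you solve for $C_{h_+,g_{16}}$ it reappears as a multiplicative factor $\widetilde\xi(2)=1/12$ in the numerator, i.e.\ as $12$ in the denominator:
\[
C_{h_+,g_{16}}=\frac{L_{alg}(11,\widetilde{\mathcal F}_{h_+,g_{16}},\mathrm{St})\,L_{alg}(30,g_{16}\otimes g_{16}\otimes f_+)}{12\cdot 2^{13}\,L_{alg}(15,14;f_+)\,L_{alg}(11,g_{16},\mathrm{St})\,L_{alg}(25,24;f_+)}.
\]
You instead wrote $12$ in the numerator, so your formula is $144$ times too large; plugging in the data of Propositions \ref{prop.example-Fourier-IM-lift16}--\ref{prop.numerical-example-16-2} would return $2^{18}\cdot 144=2^{22}\cdot 3^{2}$, not $2^{18}$, and the verification --- which is the entire content of the theorem --- would fail as written. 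The placement can be checked against the rational case $k=10$: with the data of Propositions \ref{prop.example-Fourier-IM-lift}, \ref{prop.example-L-values}, \ref{prop.numerical-example2}, the formula with $12$ in the denominator yields exactly $C_{h,g_{12}}=2^{14}$ as in Theorem \ref{thm.main-example}, whereas your version yields $2^{14}\cdot 144$.

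Two further small points. First, Proposition \ref{prop.example-L-values-16}(3) computes only the \emph{product} $L_{alg}(15,14;f_+)L_{alg}(25,24;f_+)$ via the re-pairing identity $L_{alg}(15,14;f_+)L_{alg}(25,24;f_+)=L_{alg}(25,14;f_+)L_{alg}(24,15;f_+)$; the individual factors remain inaccessible to Proposition \ref{prop.product-Hecke-L} (they have $l_1-l_2=1$, cf.\ Remark \ref{rem.product-Hecke-L}), contrary to your parenthetical remark. This is harmless here because only the product enters the formula. Second, your concluding steps --- nonvanishing of $\widetilde{\mathcal F}_{h_+,g_{16}}$ because it is a nonzero multiple of the eigenform $f_1$, hence Conjecture \ref{conj.A'} and with it Conjecture \ref{conj.A} --- match the paper and are fine. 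So the proposal is the paper's argument with one reciprocal substituted for its inverse; once the factor $\widetilde\xi(2)$ is inserted correctly, the computation does collapse to $2^{18}$.
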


{\bf Corrections to \cite{I-K-P-Y}}

* p. 165, l. -4: For `$P_{r,\nu_1,\nu_2,\nu_3}(*)$', read `$P_{r-2,\nu_1,\nu_2,\nu_3}(*)$'.

* p. 166, l. 15: For `$c_{3,k_1,k_2,k_3}(*)$', read

 `$c_{3,(k_2+k_3-k_1-r)/2,,(k_1+k_3-k_2-r)/2,(k_1+k_2-k_3-r)/2}(*)$'.

\vskip 0.2cm

* p. 175, Table 6: The $T_7$-th Fourier coefficient of $f_3$ should be $-40$.


\begin{thebibliography}{9999999}

\bigskip

\bibitem 
{An} A. N. Andrianov, Quadratic forms and Hecke operators, Springer, 1987.
\bibitem{A-M-R} 
{\sc N. Arancibia, C. M{\oe}glin and D. Renard}, \emph
{Paquets d'Arthur des groupes classiques et unitaires}, 
Ann. Fac. Sci. Toulouse Math. (6), \textbf{27} (2018), 1023--1105.
\bibitem{A-C-I-K-Y23}
{\sc H. Atobe, M. Chida, T. Ibukiyama, H. Katsurada and T. Yamauchi}, Harder's conjecture I,  J. Math. Soc. Japan, 75(2023), 1339-1408.
\bibitem  
{Bo1} S. B\"ocherer, {\"U}ber die Fourier-Jacobi-Entwicklung Siegelscher 
Eisensteinreihen II, Math. Z. 189(1985), 81-110.

\bibitem  
{Bo2} S. B\"ocherer, {\"U}ber die Fourierkoeffizienten der Siegelschen 
Eisensteinreihen, Manuscripta Math. 45(1984), 273-288.


\bibitem{B-S} S. B\"ocherer and C. G. Schmidt, $p$-adic measures attached to Siegel modular forms, Ann. Inst. Fourier 50(2000), 1375--1443.
\bibitem{C-L}
{\sc G.~ Chenevier and J. ~Lannes}, \emph{Automorphic forms and even unimodular lattices}, Springer, 2019.



\bibitem
{Du} N. Dummigan, Symmetric square $L$-functions and Shafarevich-Tate groups, Exp. Math. 10(2001), 383--400.

\bibitem
{D-I-K} N. Dummigan, T. Ibukiyama and H. Katsurada, 
Some Siegel modular standard $L$-values, 
and Shafarevich-Tate groups, 
J. Number Theory Vol. 131, Issue 7 (2011), 1296--1330. 
\bibitem 
{Ga} P. Garrett, Decomposition of Eisenstein series: Rankin triple products, Ann. Math. 125(1987), 209-235.
%\bibitem {Haya} S. Hayashida, On the spinor  $L$-function of Miyawaki-Ikeda  lifts,  Int. J. Number Theory, 10(2014), 197--207.%
%\bibitem {He}  B. Heim, Miyawaki's $F_{12}$ spinor L-function conjecture, 
Kyoto J. Math. 52(2012), no.4, 817--828.%
\bibitem 
{Ib} T. Ibukiyama,  On Differential operators on automorphic forms and invariant pluri-harmonic polynomials, Comm. Math. Univ. St. Pauli, 48(1999), 103-118. 
 \bibitem 
{I-K-P-Y} T. Ibukiyama, H. Katsurada, C. Poor and D. S. Yuen, Congruence to Ikeda-Miyawaki lifts and triple $L$-values of elliptic modular forms, J. Number Theory 134(2014) 132--180.
\bibitem{I-K}
T.~Ibukiyama and H.~Katsurada, \textit{Exact critical values of the symmetric 
fourth L function and vector valued Siegel modular forms},  J. Math. Soc. 
Japan \textbf{66} (2014) 139-160.
\bibitem{I-K-O} T. Ibukiyama, T. Kuzumaki and H. Ochiai,
Holonomic systems of Gegenbauer type polynomials of matrix arguments related with Siegel modular forms, J. Math. Soc. Japan, 64(2012) 273--316.
\bibitem 
{Ik1} T. Ikeda, On the lifting of elliptic modular forms to Siegel cusp forms of degree $2n,$ Ann. of Math. 154(2001), 641-681.
\bibitem 
{Ik2}  \underline{\hspace{22.5mm}}, Pullback of lifting of elliptic cusp forms and Miyawaki's conjecture, Duke Math. J. 131 (2006), 469-497. 
\bibitem{Ikeda-Katsurada18} T. Ikeda and H. Katsurada, On the Gross-Keating invariant of a quadratic form over a non-archimedian local field,  Amer. J. Math. 140(2018), 1521-2565.
\bibitem{Ikeda-Katsurada22}\underline{\hspace{22.5mm}}, An explicit formula for the Siegel series of a quadratic form over a non-archimedean local filed, J. reine. angew, Math. 783 (2022), 1-47.
\bibitem 
{Kat99} H. Katsurada,  An explicit formula for Siegel series, Amer. J. Math. 121(1999), 415-452.  
\bibitem 
{Kat2} \underline{\hspace{22.5mm}}, Special values of the standard zeta functions for elliptic modular forms, Experiment. Math. 14(2005), 
27-45
%\bibitem  {Ka3} \underline{\hspace{22.5mm}}, Congruence of Siegel modular forms  and  special values of their  standard zeta functions, Math. Z. 259 (2008), 97-111.%
\bibitem  {Kat4} \underline{\hspace{22.5mm}}, Exact values of standard zeta functions of Siegel modular forms, Experiment.Math.19(2010), 65-77.
\bibitem{Kat22}
\underline{\hspace{22.5mm}}, Boundedness of denominators of special values of the $L$-functions for modular forms, Comment. Math. Univ. Comment. Math. Univ. St. Pauli, \textbf{70} (2022), 29--48.

%\bibitem {K-K} H. Katsurada and H. Kawamura, Ikeda's conjecture on the period of the Duke-Imamo{\=g}lu-Ikeda lift, Proc. London Math. Soc. %


\bibitem 
{Ki} Y. Kitaoka, Dirichlet series in the theory of Siegel modular forms, Nagoya Math. J. 95(1984), 
73-84. 

\bibitem  
{Ko} W. Kohnen, Modular forms of half-integral weight on $\Gamma_0(4),$ Math. Ann. 48(1980), 249-266.


\bibitem 
{K-Z} W. Kohnen and D. Zagier, Values of L-series of modular forms at the center of the critical strip, Invent. Math. 64, 175-198(1981)
\bibitem{Lee18}{\sc C-H. Lee}, computeGK, 2018, https://github.com/chlee-0/computeGK.
\bibitem 
{Ma} Ju. I. Manin, Periods of parabolic forms and $p$-adic Hecke series, Math. USSR Sb. 21 371 (1973), 371--393.
\bibitem 
{Miy} I. Miyawaki, Numerical examples of Siegel cusp forms of degree 3 and their zeta functions, Mem. Fac. Sci. Kyushu Univ. 46(1992), 307-339.
\bibitem 
{Miz1} S. Mizumoto,   Poles and residues of standard L-functions attached to Siegel modular forms, Math. Ann. 289(1991), 589-612.

\bibitem 
{Or} T. Orloff, Special values and mixed weight triple products (with an appendix by Don Blasius), Invent. Math. 90(1987), 169-180.
%\bibitem {P-R-Y} C. Poor, N. C. Ryan, and  D. S. Yuen, Lifting puzzles in degree 4, Bull.  Aust. Math. Soc. 80(2009), 65-82.%
\bibitem 
{Sat} T. Satoh, Some remarks on triple $L$-functions, Math. Ann. 276(1987), 687-698. 
\bibitem
{Sh0} G. Shimura, The special values of the zeta functions associated with cusp forms, Comm. Pure Appl. Math. 29(1976), (no. 6) 783-804.


\bibitem 
{Sh1} \underline{\hspace{22.5mm}}, On the periods of modular forms, Math. Ann. 229(1977), 211-221. 

\bibitem
{Sh2} \underline{\hspace{22.5mm}}, The critical values of certain zeta functions associated with modular forms of half-integral weight, J. Math. Soc. Japan 33(1981), 649-672.


%\noindent
%[Sh2] \underline{\hspace{22.5mm}}, Arithmeticity in the theory of automorphic forms, Mathematical  Surveys and Monographs 82(2000), Amer. Math. Soc. 



%\bibitem {Tsu} S. Tsuyumine,   On Siegel modular forms of degree three, Amer. Jour. Math. 108(1986), 755-862, 1001-1003.%




\bibitem
{Za} D. Zagier, Modular forms whose Fourier coefficients involve zeta-functions of 
quadratic fields, 
Modular functions of one variable, VI (Proc. Second Internat. Conf., Univ. Bonn, Bonn, 1976), pp. 105--169. 
Lecture Notes in Math., Vol. 627, Springer, Berlin, 1977. 
\end{thebibliography}
\end{document}